\documentclass[onefignum,onetabnum]{siamart171218}



\usepackage{lipsum}
\usepackage{amsfonts}
\usepackage{graphicx}
\usepackage{epstopdf}
\usepackage{algorithmic}
\usepackage{amsmath,amssymb,mathrsfs}
\usepackage{color}
\usepackage{latexsym}
\usepackage{bm,bbm}

\ifpdf
  \DeclareGraphicsExtensions{.eps,.pdf,.png,.jpg}
\else
  \DeclareGraphicsExtensions{.eps}
\fi


\newcommand{\Prob}[1]{\mathbb{P}\left[#1\right]}

\newcommand{\Exp}[1]{\mathbb{E}\left[#1\right]}
\newcommand{\ExpPN}[1]{\mathbb{E}^0_{\bm N}\left[#1\right]}
\newcommand{\ExpN}[1]{\mathbb{E}_{\bm N}\left[#1\right]}

\newcommand{\Expi}[1]{\mathbb{E}^0_i\left[#1\right]}
\newcommand{\Expj}[1]{\mathbb{E}^0_j\left[#1\right]}
\newcommand{\Expk}[1]{\mathbb{E}^0_k\left[#1\right]}
\newcommand{\Expij}[1]{\mathbb{E}^0_{ij}\left[#1\right]}

\newsiamremark{remark}{Remark}
\newsiamremark{hypothesis}{Hypothesis}
\crefname{hypothesis}{Hypothesis}{Hypotheses}
\newsiamthm{claim}{Claim}

\headers{RMF neural networks}{F. Baccelli and T. Taillefumier}

\title{The pair-replica-mean-field limit \\ for intensity-based neural networks}

\author{Fran\c{c}ois Baccelli \thanks{Department of Mathematics and Department of Electrical and Computer Engineering, University of Texas, Austin, TX 
  (\email{francois.baccelli@austin.utexas.edu}).}
\and Thibaud Taillefumier \thanks{Department of Mathematics and Department of Neuroscience, University of Texas, Austin, TX 
  (\email{ttaillef@austin.utexas.edu}).}
  }

\usepackage{amsopn}


\ifpdf
\hypersetup{
  pdftitle={The Pair-Replica-Mean-Field Limit for Intensity-based Neural Networks},
  pdfauthor={F. Baccelli and T. O. Taillefumier}
}
\fi




\begin{document}

\maketitle

\begin{abstract}
Replica-mean-field models have been proposed to decipher the activity of neural networks
via a multiply-and-conquer approach. 
In this approach, one considers limit networks made of infinitely many replicas with
the same basic neural structure as that of the network of interest, but
exchanging spikes in a randomized manner. 
The key point is that these replica-mean-field networks are tractable
versions that retain important features of the finite structure of interest. 
To date, the replica framework has been discussed for first-order models,
whereby elementary replica constituents are single neurons with independent Poisson inputs. 
Here, we extend this replica framework to allow elementary replica constituents to be
composite objects, namely, pairs of neurons. 
As they include pairwise interactions, these pair-replica models exhibit non-trivial
dependencies in their stationary dynamics, which cannot be captured by first-order replica models. 
Our contributions are two-fold: 
$(i)$ We analytically characterize the stationary dynamics of a pair of
intensity-based neurons with independent Poisson input. 
This analysis involves the reduction of a boundary-value problem related to a
two-dimensional transport equation to a system of Fredholm integral equations---a result of independent interest. 
$(ii)$ We analyze the set of consistency equations determining the full network dynamics
of certain replica limits. 
These limits are those for which replica constituents, be they single neurons or pairs of neurons,
form a partition of the network of interest. 
Both analyses are numerically validated by computing input/output transfer functions for neuronal pairs
and by computing the correlation structure of certain pair-dominated network dynamics.
\end{abstract}


\begin{keywords}
Point process, stochastic differential equation, replica model, mean-field theory, Palm calculus,
stochastic intensity, transport equation, partial differential equation, boundary value problem,
neural network, Galves-L\"ocherbach model.
\end{keywords}

\begin{AMS}
37H10, 37M25, 60K15, 60K25, 90B15, 92B20
\end{AMS}


\section{Introduction} The present work focuses on neural models which represent the spiking 
neuronal activity in terms of point processes \cite{Daley1,Daley2}. 
In these models, the rate of spiking of each neuron is governed by a ``stochastic intensity''
that integrates afferent neural inputs, thereby mediating network interactions. 
These intensity-based networks constitute a natural and flexible class of models for neural activity,
whose study has a long and successful history in neuroscience
\cite{Bialek:1999,DayanAbbott,Emery:2004,Pillow:2008aa}.
Unfortunately, detailed computational knowledge of intensity-based networks is mostly
limited to simplifying limits such as the thermodynamic limit, i.e., with a very large
number of neurons interacting very weakly \cite{Amari:1975aa,Amari:1977aa,Sompolinski:1988,Faugeras:2009}.  
Such limitations preclude explaining and predicting several key aspects of neural computations that
emerge from the finite size of neural components, including activity correlation \cite{Goris:2014aa,Lin:2015aa},
dynamical metastability \cite{Abeles:1995,Tognoli:2014aa},
and computational irreversibility \cite{Feynman:1998}.
Indeed, in the thermodynamic limit, activity correlation vanishes, dynamical metastability disappears,
and computations are all reversible.
There is a need for a computational framework allowing for an analysis of structured neural networks
that reproduces the key features of finite-size circuits.

\subsection{Background} In a recent work \cite{Baccelli:2019aa}, we introduced such
a computational framework, called a replica-mean-field (RMF) framework, by adopting
the {\em multiply-and-conquer} approach. In this approach, we consider limit networks,
the so-called RMF limits, made of infinitely many replicas with the same basic finite network structure
~\cite{Vvedenskaya:1996,Benaim:121369,Baccelli:2018aa}. 
Although physically sound, these RMF limits may look computationally intractable
for being infinite dimensional. However, simulation reveals that RMF limits exhibit
the property of asymptotic independence between replicas and feature Poisson input point processes.
This is referred to as the ``Poisson Hypothesis'' in network theory~\cite{RybShlosI}.
This property significantly simplifies the analysis of the network dynamics. 
In fact, despite the infinite number of replicas, RMF limits constitute tractable
neural networks that retain key features of the dynamics of interest. 
The inclusion of finite network structure in RMF dynamics promises a first characterization
of the aspects of neural activity mentioned above: correlations between neurons, 
dynamical metastability, and computational irreversibility. 
In our introductory work~\cite{Baccelli:2019aa}, we considered RMF limits for a class
of excitatory, intensity-based networks, called linear Galves-Loch\"erbach (LGL)
models~\cite{Galves:2013,DeMasi:2015}.
In LGL models, the stochastic intensities serve as neuronal state variables which integrate
impulse-like spike deliveries, while continuously relaxing to a base rate and instantaneously
reseting upon spiking. The RMF limits of these LGL models considered in~\cite{Baccelli:2019aa}
assume the most stringent Poisson Hypothesis, whereby neurons are independent encoders
that spike with self-consistently determined stationary input rates.
These stringent---or rather first-order---RMF limits were shown to retain some of the structural
properties of LGL networks such as their rate saturation in the limit of large synaptic weights;
they also yield the explicit dependence of the spiking rates on the size of neural constituents.
However, these computational feats come at the cost of erasing all correlation structures in the RMF limits.

\subsection{Aim and Contributions}\label{ssec:aim} 
The purpose of this work is to extend the RMF framework to include
correlations among neuronal pairs, thereby introducing pair-replica-mean-field (pair-RMF) limits. 
Such pair-RMF limits are obtained by considering that independently interacting replica constituents
can be neuronal pairs in addition to single neurons\footnote{In contrast, first-order RMF
limits only comprise single neurons as replica constituents.}.
The pair-RMF approach has two components: 
$(i)$ A characterization of the stationary state of replica constituents defined as pairs
of neurons subjected to independent Poisson bombardment from upstream neurons.
$(ii)$ The construction of a network model connecting these constituents via
a set of self-consistency input rate equations.
The equations intervening in $(ii)$ are precisely those obtained from solving $(i)$.
The explicit solutions of the PDEs obtained in $(i)$ are thus the basis of the analysis conducted in $(ii)$,
in addition of being of independent mathematical interest. However, the RMF analysis conducted in $(ii)$
is the most meaningful contribution to the understanding of the quantitative
and qualitative properties of large neural networks.
Another contribution in $(ii)$ is the proof of the existence of solutions
to the self-consistency equations as a corollary of the existence of RMF limits.

\subsection{Methodology} As explained above, our first 
contribution is the analysis of a connected pair of neurons receiving
independent Poissonian spiking deliveries from upstream neurons.
Let us first describe the methodology used for this analysis.
Just as for first-order RMF limits, our strategy is to characterize the neuronal pair's
stationary state via its moment-generating function (MGF).
The MGF of a single neuron with Poissonian inputs satisfies an ordinary differential equation (ODE)
parameterized by the rate of this input. The solution to this ODE can be found by imposing some
analyticity requirements that any MGF must satisfy. By contrast the analysis of a pair of
neurons with Poisson input involves a two-variable MGF, denoted by $(u,v) \mapsto L(u,v)$.
Crucially, this MGF is solution to a partial differential equation (PDE) instead of an ODE.
Thus, the main challenges to characterize stationary pair dynamics is to extend our
ODE analysis to the PDE setting.
The first challenge consists in finding a determinate form for the PDE solution provided
by the method of characteristics~\cite{Evans:2010}.
This can be done via a boundary analysis of the PDE, which reveals the key role played 
by the boundary fluxes $\partial_u L(0,v)$ and $\partial_v L(u,0)$.
The second challenge consists in exploiting the requirement of analyticity of the
PDE solution to deduce functional equations determining the neuronal pair stationary rate.
This can be done via elementary analytic manipulations to yield a system of two coupled
homogeneous Fredholm equations bearing on the boundary fluxes $\partial_u L(0,v)$ and
$\partial_v L(u,0)$, in addition to a normalization condition \cite{Polyanin:2008}.
The boundary fluxes $\partial_u L(0,v)$ and $\partial_v L(u,0)$ are closely related to
the MGFs of the neuronal Palm distributions, which characterize the typical state
of the neuronal pair at a spiking time~\cite{Mathes:1964,Mecke:1967}.
This relation allows us to give a probabilistic interpretation to the integral
equations via the rate-conservation principle applied to the embedded Markov
chain~\cite{BremaudBaccelli:2003} of the pair.
This probabilistic interpretation establishes the uniqueness of the solution to the PDE as a by-product.

Our second contribution is the determination of the consistency equations allowing for
the analysis of a large network in terms of its constituents. 
We primarily focus on the case where
these constituents are either pairs or single neurons obtained from a partition
of the set of all neurons of a large network.
One can then leverage the solutions of the ODEs and PDEs
associated to these constituents to consistently determine their stationary input rates in the RMF limit. 
The resulting consistency equations can be interpreted as Kirchhoff-type conservation laws
stipulating the balance between input activity and output activity for each constituent \cite{Feynman:1964vol2}.
These Kirchhoff-type laws exhibit a few original features:
First, these laws are nonlinear with respect to input rates due to the inclusion of
post-spiking reset rules---the exclusive source of nonlinearity in LGL dynamics~\cite{Galves:2013,DeMasi:2015}.
Second, these laws weight non-multiplicatively the role of input rates and synaptic weights
in shaping the input/output balance---a hallmark of finite-size effects in RMF approaches~\cite{Baccelli:2019aa}.
Third, these laws can bear on the input/output balance of composite objects,
namely pairs of neurons, with nontrivial internal correlations---a specificity of the pair-RMF approach
discussed in the present paper.
The establishment of such Kirchhoff-type conservation laws is ultimately made possible by the
Poissonian nature of the constituents' inputs in the RMF limit, which in turn follows from
the property of asymptotic independence between replicas in the RMF limit.
The rigorous justification of this asymptotic independence is the object of
a forthcoming paper~\cite{Davydov:2020}. 
RMF limits can be viewed as physical probabilistic systems whose stationary input
rates solve these consistency equations.
This provides a new computational framework to analyze large neural networks.

\subsection{Structure}  In \Cref{sec:RMFA}, we characterize the stationary regime
of a neuronal pair receiving independent Poissonian spike deliveries via a PDE,
called the pair-PDE, whose study is the first focus of this work.
For completeness, we include a derivation of the pair-PDE
by application of the rate-conservation principle of Palm calculus in Appendix.
In \Cref{sec:APPDE}, we present our first contribution, i.e., a solution to the pair-PDE, which we deduce by
imposing requirements of analyticity that any probabilistic solutions should meet.
We provide the probabilistic interpretation of this solution in Appendix. 
In \Cref{sec:transfer}, we utilize our analytical results to study
the input/output transfer function of a neuronal pair, which includes pair-correlation estimates. 
This section also contains a closed form expression for the solution to the PDE when
the neuronal pair receives no external inputs.
In \Cref{sec:RI}, we discuss our second contribution, i.e., the use of the
solution of the pair-PDE for RMF limits of large networks. As explained above,
these RMF limits are physical systems whose stationary states satisfy the
self-consistency rate equations.
We discuss these consistency equations for RMF limits associated to a partition of the network 
in neuronal pairs or singletons (this will be referred to
as the pair-partition RMF) and to its all-pair decomposition (all-pair-RMF).

\subsection{Related work} The inspiration for the replica models analyzed 
in this work is rooted in the theory of nonlinear Markov processes, 
which were introduced by McKean \cite{McKean:1966}. These processes
were extensively used to study mean-field limits in queueing systems, initially by the Dobrushin school 
\cite{Vvedenskaya:1996,Rybko:2009aa,RybShlosI,RybShlosII}, and later by M. Bramson \cite{bramson:2011}.
This literature has two distinct components: 
$a)$ a probabilistic component proving asymptotic independence from the equations satisfied by the
non-linear Markov process, and $b)$ a computational component deriving closed-form expressions
for the mean-field limit of the system of interest.
These two components jointly led to a wealth of results in queueing theory (see, e.g., \cite{Vvedenskaya:1996}). 
Our work in \cite{Baccelli:2019aa} showed that, just as in queueing theory, studying neural
networks in the RMF limit is computationally tractable, at least for first-order RMF models.
The purpose of this work is to extend the RMF framework introduced in \cite{Baccelli:2019aa}
to include pairwise interactions. To this end, we resort to analytical methods to specify
the MGFs associated to RMF limits where elementary constituents can be single neurons
or pairs of neurons. Finding MGFs by imposing conditions of analyticity on some solutions
is a classical approach in queueing theory \cite{Takacs:1962aa}, and in a PDE context ~\cite{Fayolle:2017}.
However, the method used in this work, which consists in a boundary analysis of the
general solution provided by the method of characteristics, is novel.

Our approach is part of a rich line of prior attempts to solve the neural master equations in
computational neuroscience. Brunel {\it et al.} introduced mean-field limits for large
neural networks with weak interactions from a statistical physics perspective
\cite{Abbott:1993,Brunel:1999aa, Brunel:2000aa}. Touboul {\it et al.} then adapted the
ideas of ``propagation of chaos'' for neural networks in the thermodynamic
mean-field limit \cite{Baladron:2012aa, Touboul:2012aa, Robert:2016}.
Their results were specialized to spiking models with memory resets by
Galves and Locherb\"ach, who also provided perfect algorithms to simulate the
stationary states of infinite networks \cite{Galves:2013,DeMasi:2015}.
In a more computational approach, Toyoizumi {\it et al.} determined finite-size
corrections to mean-field models of weakly interacting Hawkes processes \cite{Toyoizumi:2008aa}.
Schwalger {\it et al.} captured finite-size effects in large but finite intensity-based 
neural networks by developing a quasi-renewal approximation for mesoscopic
neural populations  \cite{Schwalger:2017aa}. 
Dumont {\it et al.} analyzed similar finite-size effects in intensity-based neural
networks by studying stochastic partial differential equations obtained via a linear
Gaussian approximation \cite{Dumont:2017aa}.
Closer to our approach, Buice, Cowan, and Chow adapted techniques from statistical physics to analyze
the hierarchy of moment equations obtained from the master equations \cite{Buice:2007,Buice:2009aa}.
These authors were able to truncate the hierarchy of moment equations to consider models
amenable to finite-size analysis via system-size or loop expansion around the mean-field
solution \cite{Bressloff:2009aa}.


\section{The stationary regime of a neuronal pair}
\label{sec:RMFA}


In this section, we characterize the elementary dynamics at the crux of the present work. 
In \Cref{sec:LGL}, we introduce the so-called linear Galves-L\"ocherbach (LGL) neuronal
dynamics of general dimensions.
In \Cref{ssec:RCE}, we derive from the PDE satisfied by the MGF of general LGL networks the PDE associated to a pair of LGL neurons receiving
independent Poissonian spike deliveries. 


\subsection{Linear Galves-L\"ocherbach models}\label{sec:LGL}

We consider a finite assembly of $K$ neurons whose spiking activity is modeled as the realization
of a system of simple point processes without common points $\bm{N} = \lbrace N_i \rbrace_{1\leq i \leq K}$
on $\mathbb{R}$ defined on an underlying measurable space $(\Omega, \mathcal{F})$.
For all neurons $1\leq i \leq K$, we denote by $\lbrace T_{i,n} \rbrace_{n \in \mathbb{Z}}$,
the sequence of successive spiking times with the convention that almost surely
$T_{i,0} \leq 0 < T_{i,1}$ and $T_{i,n}<T_{i,n+1}$ (this is the customary numbering convention
for stationary point processes).
Each point process $N_i$ is a family $\lbrace N_i(B)\rbrace_{B \in \mathcal{B}(\mathbb{R})}$
of random variables with values in $\mathbb{N} \cup \lbrace \infty \rbrace$ 
indexed by the Borel $\sigma$-algebra $\mathcal{B}(\mathbb{R})$ of the real line $\mathbb{R}$.
Concretely, the random variable $N_i(B)$ counts the number of times that neuron $i$ spikes within 
the set $B$, i.e., $N_i(B) = \sum_{n \in \mathbb{Z}} \mathbbm{1}_B(T_{i,n})$. 
Setting the processes $N_i$, $1 \leq i \leq K$, to be independent Poisson processes
defines the simplest instance of our point-process framework as a collection of non-interacting neurons.

To model spike-triggered interactions within the network, we consider that the rate
of occurrences of future spikes depends on the spiking history of the network.
In other words, we allow the instantaneous spiking rate of neuron $i$ to depend on the
times at which neuron $i$ and other neurons $j \neq i$ have spiked in the past.
Formally, the network spiking history $\lbrace \mathcal{F}_t \rbrace_{t \in \mathbb{R}}$
is defined as a non-decreasing family of $\sigma$-fields such that, for all $t$,
\begin{eqnarray}
 \mathcal{F}_t^{\bm{N}} = \left\{ \sigma\left( N_1(B_1), \ldots, N_K(B_K)\right) \, \vert \, B_i \in \mathcal{B}(\mathbb{R}) \, , \: B_i \subset (-\infty,t] \right\} \subset \mathcal{F}_t ,
\end{eqnarray}
where $\mathcal{F}_t^{\bm{N}}$ is the internal history of the spiking process $\bm{N}$.
The network spiking history $\lbrace \mathcal{F}_t \rbrace_{t \in \mathbb{R}}$ determines
the rate of occurrence of future spikes via the notion of stochastic intensity.
The stochastic intensity of neuron $i$, denoted by $\lbrace \lambda_i(t) \rbrace_{t \in \mathcal{R}}$,
can be seen as a function of $\lbrace \mathcal{F}_t \rbrace_{t \in \mathbb{R}}$
specifying the instantaneous spiking rate of neuron $i$.
It is formally defined as the $\mathcal{F}_t$-predictable process
$\lbrace \lambda_i(t) \rbrace_{t \in \mathbb{R}}$ satisfying 
\begin{eqnarray*}
\Exp{N_i(s,t] \, \vert \, \mathcal{F}_{s}}= \Exp{\int_s^t \lambda_i(u) \, du\, \Big \vert \, \mathcal{F}_{s}} \, ,
\end{eqnarray*}
for all intervals $(s,t]$ \cite{Jacod:1975}. 
Stochastic intensities generalize the notion of rate of events, or hazard function,
to account for generic history dependence beyond that of Poisson processes or renewal processes.

Specifying the history-dependence of the neuronal stochastic intensities entirely defines
a network model within the point-process framework. In this work, we consider models for which
the stochastic intensities $\lambda_1, \ldots, \lambda_K$ obey the following
system of coupled stochastic integral equations
\begin{eqnarray}\label{eq:dynmod}
\lefteqn{
\lambda_i(t) = \lambda_i(0) + \frac{1}{\tau_i} \int_0^t \left(b_i-\lambda_i(s) \right)\, ds +} \nonumber\\
&& \hspace{60pt} \sum_{j \neq i} \mu_{ij} \int_0^t N_j(ds) + \int_0^t \big(r_i-\lambda_i(s) \big) N_i(ds) \, ,
\end{eqnarray}
where the spiking processes $N_i$ have stochastic intensity $\lambda_i$. 
The above system of stochastic equations characterizes the history-dependence of the stochastic intensities.
The first integral term indicates that in between spiking events, $\lambda_i$ deterministically relaxes
toward its base rate $b_i>0$ with relaxation time $\tau_i$.
The second integral terms indicates that a spike from neuron $j \neq i$
causes $\lambda_i$ to jump by $\mu_{ij} \geq 0$, the strength of the synapse from $j$ to $i$.
Finally, the third integral term indicates that $\lambda_i$ resets to
$0 \leq r_i \leq b_i$ upon spiking of neuron $i$. 
Taking $r_i < b_i$ models the refractory behavior of neurons whereby spike generation causes
the neuron to enter a transient quiescent phase. 

In summary, the system parameters and the dynamics of this model are as follows:
$(i)$ The network has $K>0$ neurons connected via synaptic weights $\mu_{ij}$, $i\ne j =1,\ldots,K$.
$(ii)$ The state variables are the stochastic intensities ${\bm \lambda}(t) = (\lambda_i(t))_{i=1,\dots,K}$.
$(iii)$ In between spiking interactions, the intensity $\lambda_i(t)$ relaxes
toward the base rate $b_i>0$, with relaxation time $\tau_i>0$.
$(iv)$ When neuron $i$ spikes, $\lambda_i$ resets to $r_i$ with
$0 \leq r_i \leq b_i$ and for all $j\ne i$, $\lambda_j$ increases by $\mu_{ji}\ge 0$.
Thus-defined, our model can be seen as a system of coupled Hawkes processes with spike-triggered
memory reset and belongs to the Galves-L\"ocherbach class of models \cite{Galves:2013}.


\subsection{Partial differential equation for the pair dynamics}\label{ssec:pairPDE}

Consider a LGL network with $K$ neurons $i$, $1 \leq i \leq K$, specified by the relaxation times $\tau_i$,
the base rates $b_i$, the reset values $r_i$, $1 \leq i \leq K$, and the interaction weights
$\mu_{ij}$, $1 \leq i\ne j \leq K$.
In \cite{Baccelli:2019aa}, we showed that such a network defines a Harris ergodic Markov
chain with state variables ${\bm \lambda}(t) = ( \lambda_i(t) )_{i=1,\dots,K}$,
where $\lambda_i(t)$ denotes the stochastic intensity of neuron $i$ at time $t$.
Moreover, the dynamics of ${\bm \lambda}(t)$ converges at least exponentially fast
toward a stationary dynamics with exponential moments.
This allows one to characterize the stationary distribution of the network state via the stationary MGF
\begin{eqnarray}
L(u_1, \ldots, u_k) = \Exp{ \exp{\left( \sum_{i=1}^K u_i \lambda_i(0) \right) }} \, , \quad \mathrm{with} \quad u_1, \ldots, u_K \in \mathbbm{R} \, ,
\end{eqnarray}
where, by convention,  $\lambda_i(0)$ denotes the stationary stochastic intensity of neuron $i$.
In principle, one could characterize the above MGF as the solution of a  first-order linear PDE which was
given in \cite{Baccelli:2019aa}:

\begin{definition}\label{th:exactPDE}
The full $K$-dimensional MGF $L$ satisfies the PDE
\begin{eqnarray}\label{eq:exactPDE}
\left( \sum_i \frac{u_i b_i}{\tau_i}  \right) L - \sum_i \left( 1+\frac{u_i}{\tau_i} \right) \partial_{u_i} L
+
\sum_i e^{\left(u_i r_i + \sum_{j \neq i} u_j \mu_{ji}\right)} \partial_{u_i} L \Big \vert_{u_i=0} = 0 \, .
\end{eqnarray}
\end{definition}

\begin{figure}
  \centering
  \includegraphics[width=0.8\textwidth]{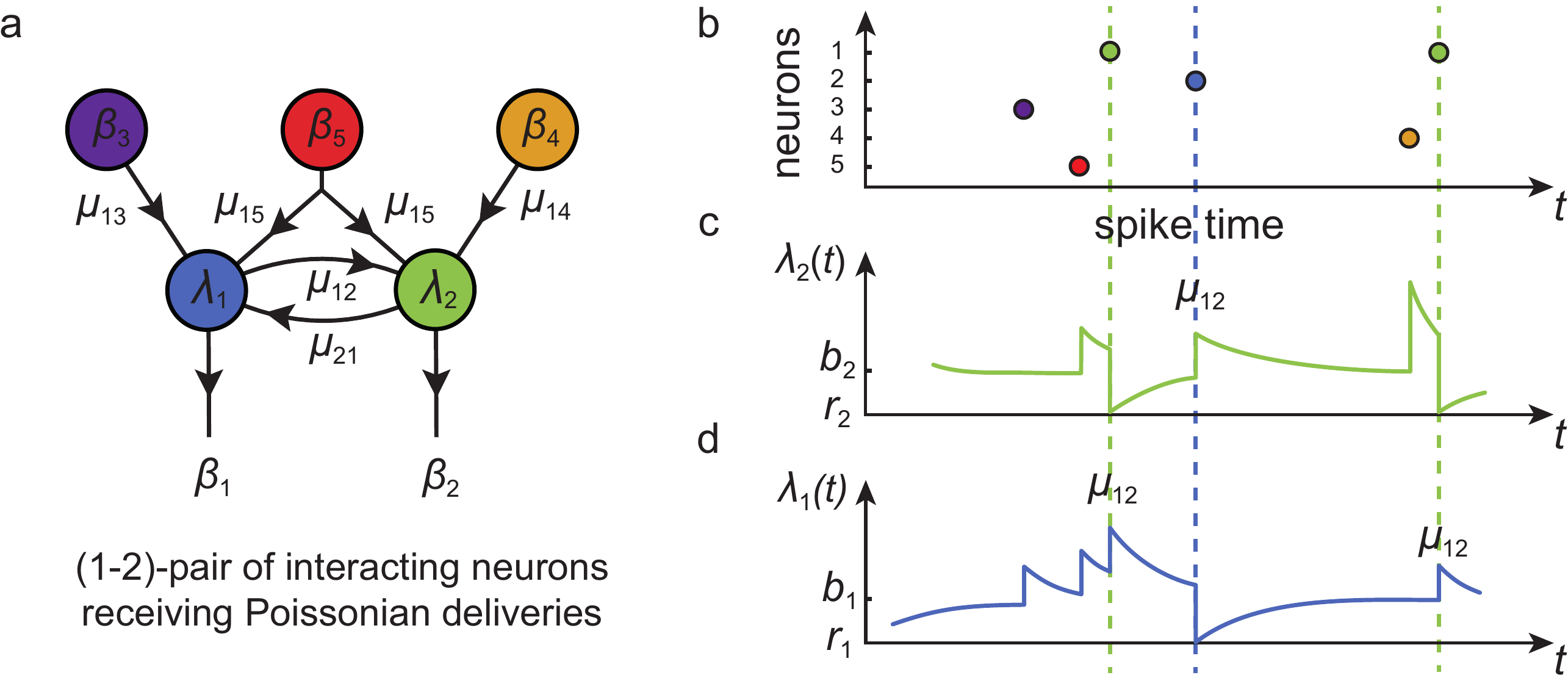}
  \caption{{\bf Elementary dynamics of a neuronal pair.}
   {\bf a.} Schematic of an interacting pair of neurons receiving independent Poissonian input from three upstream neurons.
  {\bf b.} The spiking times of the neurons define a point process determining the stochastic intensities of the neuronal pair.
  {\bf c.} and
  {\bf d.} Evolution of the stochastic intensities $(\lambda_1,\lambda_2)$ of the neuronal pair $(1,2)$ with relaxation toward base  rate $(b_1,b_2)$, post-spiking reset $(r_1,r_2)$, and interaction weights $\mu_{12}$ and $\mu_{21}$.
  }
  \label{fig:pair}
\end{figure}

In practice, the PDE of Theorem \ref{th:exactPDE} is analytically intractable
due to the presence of non-local boundary flux terms, except for some elementary dynamics.
These elementary dynamics, which are the focus of this section, are that of a neuronal pair
subjected to independent Poissonian spike deliveries (see \Cref{fig:pair}).
Compared to the full-dimensional picture, this corresponds to singling out a pair of
neurons $(i,j)$ receiving inputs from a given set of upstream neurons, $k \neq i,j$.
These upstream neurons provide independent Poissonian external drives
with intensities $\beta_k$.
Then, the stationary dynamics of the pair state
$(\lambda_i,\lambda_j)$ solves the system of stochastic equations
\begin{eqnarray}\label{eq:dynmod2a}
\lambda_i(t) &=& \lambda_i(0) + \frac{1}{\tau_i} \int_0^t \left(b_i-\lambda_i(s) \right)\, ds + \int_0^t \big(r_i-\lambda_i(s) \big) N_i(ds) \nonumber\\
&&  \mu_{ij} \int_0^t N_j(ds) + \sum_{k \neq i,j} \mu_{ik} \int_0^t N_k(ds) \, , \\
\label{eq:dynmod2b}
\lambda_j(t) &=& \lambda_j(0) + \frac{1}{\tau_j} \int_0^t \left(b_j-\lambda_j(s) \right)\, ds + \int_0^t \big(r_j-\lambda_j(s) \big) N_j(ds) \nonumber\\
&&   \mu_{ji} \int_0^t N_i(ds) + \sum_{k \neq i,j} \mu_{jk} \int_0^t N_k(ds) \, , 
\end{eqnarray}
where $N_i$ and $N_j$ have stochastic intensities $\lambda_i$ and $\lambda_j$, respectively,
and where the processes $N_k$, $k \neq i,j$, are independent stationary Poisson processes
with rate $\beta_k$. Observe that in both \eqref{eq:dynmod2a} and \eqref{eq:dynmod2b},
the first line specifies the autonomous evolution of neurons $i$ and $j$, whereas the second 
line collects the terms arising from pair interactions and from external drives.
With independent Poissonian drives, the joint stationary distribution of the pair
states $(\lambda_i,\lambda_j)$ is characterized by its two-variable MGF
\begin{eqnarray}
L_{ij}(u,v)=\Exp{e^{u \lambda_i(0)+v \lambda_j(0)}}=L\vert_{u_i=u,u_j=v,u_k=0, k \neq i, j},\quad 1\le i < j\le K \, ,
\end{eqnarray} 
which is defined for all real numbers $u, v$.
Observe that the mean spiking rates of the neuronal pair $(i,j)$, denoted by $(\beta_i,\beta_j)$, satisfy:
\begin{eqnarray}\label{eq:defBeta}
\beta_i=\Exp{\lambda_i}=\frac{\partial L_{ij}}{\partial u}L(0,0) \quad \mathrm{and} \quad \beta_j=\Exp{\lambda_j}=\frac{\partial L_{ij}}{\partial v}L(0,0) \, .
\end{eqnarray}
In the following, when discussing pair-related quantities, we will drop the subscripts
on $i$ and $j$ whenever possible. For instance, we will refer to $L_{ij}$ as $L$ when there is no ambiguity. 
With that in mind, our first goal is to specify the MGF $L$ as the solution of a PDE~\cite{Evans:2010}.
This PDE, which is given in the next definition, will be referred to as the pair-PDE of pair
$(i,j)$ in the initial network. 
This PDE can be obtained by specializing the full PDE \eqref{eq:exactPDE}
for the two-variable MGF $L$ by setting $u_k=0$ for $k \neq i,j$ and for the network structure of interest, i.e., by setting $\mu_{ki}=\mu_{kj}=0$ for all $k \neq i,j$.

\begin{definition}\label{def:pairPDE}
When subjected to independent Poissonian spike deliveries with rates $\beta_k$, $k\neq i,j$, 
the two-variable stationary MGF $L$ of the neuronal pair $(i,j)$ satisfies the pair PDE:
\begin{eqnarray}\label{eq:pairPDE}
\lefteqn{ \left( 1\!+ \! \frac{u}{\tau_i}\right)  \partial_u L+ \left( 1\!+ \! \frac{v}{\tau_j} \right) \partial_v L -\left( \frac{ub_i}{\tau_i}  +  \frac{vb_j}{\tau_j}  +\sum_{k \neq i,j}\left( e^{u \mu_{ik}+v \mu_{jk}}\!-\!1\right) \beta_k \right) L = } \nonumber\\
&&  \hspace{200pt} e^{u r_i+v \mu_{ji}}  \partial_u L \vert_{u=0}+  e^{v r_j+u \mu_{ij}}\partial_v L \vert_{v=0}  \, .
\end{eqnarray}
\end{definition}

The pair-PDE \eqref{eq:pairPDE} is a nonlocal first-order PDE with boundary terms
involving partial derivatives. Conceptually, it depicts the stationary state of a $2$-dimensional
transport equation in the region $[-\tau_i,0] \times [-\tau_j,0]$ of the $(u,v)$-plane, 
with linear drift $( 1+u/\tau_i, 1+v/\tau_j)$, 
with non-linear death rate involving the parameters $\beta_k$,
and with non-local birth rates related to fluxes through the hyperplane $\{ u = 0 \}$ and $\{ v = 0 \}$.
Despite this conceptual simplicity, the presence of flux-related, non-local, birth rates and nonlinear
death rates precludes one from solving \eqref{eq:pairPDE} explicitly, except for the simplest cases.
As we shall see, explicit solutions for $L$ are only possible at this stage in the absence of external
inputs, i.e., when $\beta_k=0$, $k \neq i,j$ (see \Cref{sec:nodrive}). 
A probabilistic proof of the pair-PDE is given in  \Cref{ssec:RCE}.


\section{Analysis of the pair-PDE}
\label{sec:APPDE}


In this section, we show that techniques from the MGF formalism allow one to reduce
the pair-PDE  \eqref{eq:pairPDE} to a set of integral equations bearing on the boundary terms of the pair-PDE.
In  \Cref{sec:char}, we give an integral representation for the solutions of the pair-PDE in
terms of the boundary terms and of an undetermined function.
In  \Cref{sec:bound}, we give a fully determined integral representation of the boundary terms.
In  \Cref{sec:simple}, we exploit the requirement of boundedness for the solution to lift 
the indeterminacy of the integral representation for solutions of the pair-PDE.
In  \Cref{sec:intSys}, we derive from the integral representation for the bounded solution an integral system of Fredholm equations characterizing the full solution of the pair-PDE.


\subsection{Integral representation via the method of characteristics}\label{sec:char}

The main hindrance to solving the pair-PDE \eqref{eq:pairPDE} is due to the presence of boundary terms.
Here, we temporarily sidestep this hindrance by assuming the boundary terms known and
we consider \eqref{eq:pairPDE} as a classical linear PDE, which can be solved via
the method of characteristics~\cite{Evans:2010}.
Such an approach yields integral representations for solutions to \eqref{eq:pairPDE}.
However, in addition to boundary terms, these solutions involve an indeterminate function 
which corresponds to constants of integration along the various characteristic curves.

To simplify the application of the method of characteristics, we perform a change of variables 
that transforms the pair-PDE \eqref{eq:pairPDE} into a PDE with constant first-order coefficients.
Specifically, we define the function $H$ by $H(x,y)=L(u,v)=L_{ij}(u,v)$ via the change of variable
\begin{eqnarray}\label{eq:varChange}
\begin{array}{ccc}
x&=& \displaystyle  \tau_i \ln{\left( 1+ \frac{u}{\tau_i}\right)} \vspace{5pt}  \\ 
y&=& \displaystyle \tau_j \ln{\left( 1+ \frac{v}{\tau_j}\right)}
\end{array}
\quad \mathrm{and} \quad
\begin{array}{ccc}
u&=& \displaystyle  \tau_i \left( e^{\frac{x}{\tau_i}}-1\right) \vspace{5pt}  \\ 
v&=& \displaystyle \tau_j \left( e^{\frac{y}{\tau_j}}-1 \right)
\end{array}
  \, .
\end{eqnarray}
Performing the above change of variables in the pair-PDE \eqref{eq:pairPDE} leads to a linear PDE for $H$ associated to a transport problem in the negative orthant $\mathbbm{R}^-\!\times\mathbbm{R}^-$
\begin{eqnarray}\label{eq:pairPDE2} 
\label{eq:pdeH}
\partial_x H + \partial_y H - f_{ij}(x,y) H = g_{ij}(x,y) \, ,
\end{eqnarray}
where we have introduced the auxiliary functions
\begin{eqnarray}
\label{eq:fij}
\lefteqn{ f_{ij}(x,y) = b_i \left( e^{\frac{x}{\tau_i}}-1\right) + b_j \left( e^{\frac{y}{\tau_j}}-1\right) } \nonumber\\
&& \hspace{110pt} + \sum_{k \neq i,j} \left( e^{ \tau_i \mu_{ik}  \left( e^{\frac{x}{\tau_i}}-1\right)+ \tau_j \mu_{jk}  \left( e^{\frac{y}{\tau_j}}-1\right)}-1\right) \beta_k \, ,
\end{eqnarray}
\begin{eqnarray}
\label{eq:gij}
\lefteqn{ 
g_{ij}(x,y) =  e^{ \tau_i r_i  \left( e^{\frac{x}{\tau_i}}-1\right)+ \tau_j \mu_{ji}  \left( e^{\frac{y}{\tau_j}}-1\right) } \partial_x H \vert_{x=0}  } \nonumber\\
&& \hspace{140pt}+ e^{ \tau_i \mu_{ij}  \left( e^{\frac{x}{\tau_i}}-1\right)+ \tau_j r_j \left( e^{\frac{y}{\tau_j}}-1\right)} \partial_y H \vert_{y=0} \, , 
\end{eqnarray}
with $\partial_x H (0,y)= \partial_u L (0,v) = \Exp{\lambda_i e^{v \lambda_j}}$.
Following on the analysis of the first-order RMF~\cite{Baccelli:2019aa}, we expect the solution to the pair-PDE
\eqref{eq:pairPDE} to admit an infinity of solutions $L$,
with possibly diverging behavior in $u=-\tau_i$ and $v=-\tau_j$.
After the change of variable, the corresponding loci for diverging behavior of $H$ are
$x \to -\infty$ and $y \to -\infty$, respectively. 
The following lemma specifies the integral representation of solutions to \eqref{eq:pdeH} 
that will form the basis for our analysis.

\begin{lemma}\label{lem:PDE}
The general solution to the linear first-order PDE \eqref{eq:pdeH} is given by
\begin{eqnarray}\label{eq:genPDE}
\lefteqn{ H(x,y) = e^{\int_0^x f_{ij}(u,y-x+u) \, du} } \nonumber\\
&& \hspace{50pt} \times \left( K(y-x) + \int_0^x g_{ij}(u,y-x+u) \, e^{-\int_0^u f_{ij}(v,y-x+v) \, dv} \, du \right) \, ,
\end{eqnarray}
where the function $K$ is determined as a boundary condition on the line $x=0$.
\end{lemma}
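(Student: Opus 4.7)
The plan is to apply the method of characteristics in its standard form, exploiting the fact that the first-order part $\partial_x H + \partial_y H$ has constant coefficients $(1,1)$, so the characteristic curves are simply the straight lines $y - x = c$ in the plane. The key change of variables is therefore
\begin{equation*}
\xi = x, \qquad \eta = y - x, \qquad \tilde{H}(\xi,\eta) = H(\xi,\eta+\xi),
\end{equation*}
under which the transport operator becomes $\partial_x + \partial_y = \partial_\xi$ (with $\eta$ held fixed), and the PDE \eqref{eq:pdeH} reduces to the one-parameter family of linear scalar ODEs in $\xi$ indexed by $\eta$:
\begin{equation*}
\partial_\xi \tilde{H}(\xi,\eta) - f_{ij}(\xi, \eta+\xi)\, \tilde{H}(\xi,\eta) = g_{ij}(\xi,\eta+\xi).
\end{equation*}

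The next step is routine: solve this ODE with the integrating factor
\begin{equation*}
\mu(\xi,\eta) = \exp\Bigl(-\int_0^\xi f_{ij}(u,\eta+u)\,du\Bigr),
\end{equation*}
which gives $\partial_\xi\bigl(\mu \tilde H\bigr) = \mu\, g_{ij}(\xi,\eta+\xi)$. Integrating from $0$ to $\xi$ and denoting the constant of integration by $K(\eta) := \tilde{H}(0,\eta) = H(0,\eta)$, I obtain
\begin{equation*}
\tilde{H}(\xi,\eta) = \mu(\xi,\eta)^{-1}\left( K(\eta) + \int_0^\xi g_{ij}(u,\eta+u)\, \mu(u,\eta)\,du\right).
\end{equation*}
Reverting to the original variables via $\xi = x$ and $\eta = y - x$ (so that $\eta + u = y - x + u$ in the inner integrals) yields exactly formula \eqref{eq:genPDE}.

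To close the argument I would then verify in the reverse direction that any $H$ of the form \eqref{eq:genPDE}, for an arbitrary differentiable function $K$, indeed satisfies \eqref{eq:pdeH}: this is a direct differentiation, where the $x$-derivative produces one term from differentiating the exponential prefactor (which contributes $f_{ij}(x,y) H$ after a Leibniz computation using $\tfrac{d}{dx}\int_0^x f_{ij}(u,y-x+u)du = f_{ij}(x,y) - \int_0^x \partial_2 f_{ij}(u,y-x+u)du$) together with a cross term involving $\partial_2 f_{ij}$, while the $y$-derivative contributes exactly the cancelling $\int_0^x \partial_2 f_{ij}$ term and nothing else from the prefactor. The remaining $g_{ij}(x,y)$ arises from differentiating the upper endpoint of the inner integral.

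The only mildly delicate point in the plan is the bookkeeping in this direct verification, since the exponential prefactor depends on $y$ through the integrand. I expect this to be the main (though still routine) obstacle: one has to track that the two contributions from $\partial_2 f_{ij}$ inside $\partial_x$ and $\partial_y$ cancel, leaving the clean linear form of \eqref{eq:pdeH}. Note that the lemma intentionally leaves $K$ undetermined, reflecting the well-known fact that prescribing an arbitrary datum on the non-characteristic line $\{x=0\}$ gives a well-posed Cauchy problem along the characteristics; the boundary terms built into $g_{ij}$ via $\partial_x H\vert_{x=0}$ and $\partial_y H\vert_{y=0}$ will later impose additional self-consistency constraints that pin $K$ down, but those are the subject of the subsequent subsections.
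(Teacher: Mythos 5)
Your proposal is correct and is essentially the same argument as the paper's: the change of variables $(\xi,\eta)=(x,y-x)$ is precisely the parameterization of the characteristic lines $\tilde{x}(t)=t$, $\tilde{y}(t)=t+y-x$ used in the paper, and the integrating-factor solution of the resulting ODE with constant of integration $K(\eta)=H(0,\eta)$ reproduces \eqref{eq:genPDE} exactly as in the paper's proof. The extra reverse-direction verification you sketch is not in the paper but is a harmless (and routine) addition.
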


\begin{proof}
The characteristic curves $t \mapsto \big(\tilde{x}(t), \tilde{y}(t)\big)$ of the PDE \eqref{eq:pairPDE2} satisfy $\tilde{x}'(t)=\tilde{y}'(t)=1$.
Thus the characteristic curve passing through $(x, y)$ admits the parameterization:
\begin{eqnarray}
\tilde{x}(t)=t \quad \mathrm{and} \quad \tilde{y}(t)=t+y-x  \quad \mathrm{with} \quad \tilde{x}(x)=x \quad \mathrm{and} \quad \tilde{y}(x)=y\, .
\end{eqnarray}
Following the method of characteristics, we observe that the function defined by  $C(t)=H\big(\tilde{x}(t), \tilde{y}(t)\big)$ satisfies the one-dimensional linear ODE
\begin{eqnarray}
\partial_t C
=
f_{ij}\big(\tilde{x}(t), \tilde{y}(t)\big) C + g_{ij}\big(\tilde{x}(t), \tilde{y}(t)\big) \, ,
\end{eqnarray}
whose solution admits the following integral representation
\begin{eqnarray}
\hspace{25pt} C(t) = e^{\int_0^t f_{ij}(\tilde{x}(u), \tilde{y}(u)) \, du} 
\left( C(0) + \int_0^t g_{ij}(\tilde{x}(u), \tilde{y}(u))\, e^{-\int_0^u f_{ij}(\tilde{x}(v), \tilde{y}(v)) \, dv} \, du \right) \, ,
\end{eqnarray}
where the constant $C(0)$ only depends on $y-x$ via $C(0)=H\big(\tilde{x}(0), \tilde{y}(0)\big)=H(0,y-x)$.
Observing that $H(x,y)=H\big(\tilde{x}(x),\tilde{y}(x)\big)=C(x)$ and expressing that $\tilde{x}(u)=u$ and $\tilde{y}(u)=u+y-x$, the general solution to the PDE \eqref{eq:pdeH} is
\begin{eqnarray}
\lefteqn{H(x,y) = e^{\int_0^x f_{ij}(u,y-x+u) \, du} } \nonumber\\
&& \hspace{50pt} \times \left( K(y-x) + \int_0^x g_{ij}(u,y-x+u) \, e^{-\int_0^u f_{ij}(v,y-x+v) \, dv} \, du \right) \, ,
\end{eqnarray}
where $K$ is the boundary condition of $H$ on the line $x=0$, i.e., $K(y)=H(0,y)$.
In particular, we have $K(0)=H(0,0)=1$.
\end{proof}


\subsection{Integral representation on the boundary}\label{sec:bound}

The integral representation \eqref{eq:genPDE} for the functions $H$ solving \eqref{eq:pdeH}
involves boundary terms as unknowns as well as an undetermined function $K$.
Here, we further determine our PDE problem by deriving an alternative integral
representation for $H$ on the boundary $y=0$ that does not involve $K$.
The derivation of such an integral representation directly follows from considering the original PDE \eqref{eq:genPDE} as 
an ODE when restricted to the line $y=0$.  Specifically, we have:

\begin{lemma}\label{lem:ODE}
On the boundary $y=0$, the general solution to the linear first-order PDE \eqref{eq:pdeH} satisfies
\begin{eqnarray}
H(x,0) = e^{\int_0^x f_{ij}(u,0) \, du} \left( 1+ \int_0^x f_i(u) \, e^{-\int_0^u f_{ij}(v,0) \, dv} \, du \right) \, ,
\end{eqnarray}
where the functions $f_{ij}$ are defined in \eqref{eq:fij} and
the auxiliary function $f_i$ is defined by:
\begin{eqnarray}\label{eq:fi}
f_i(x) = g_{ij}(x,0) \left( 1 - e^{ \tau_i \mu_{ij}  \left(1- e^{\frac{x}{\tau_i}}\right)}\right) +  \beta_i e^{\tau_i (r_i-\mu_{ij})  \left( e^{\frac{x}{\tau_i}}-1\right) } \, ,
\end{eqnarray}
with
\begin{eqnarray}
\label{eq:newgij}
g_{ij}(x,0) = \beta_i e^{\tau_i r_i  \left( e^{\frac{x}{\tau_i}}-1\right) } +  e^{ \tau_i \mu_{ij}  \left( e^{\frac{x}{\tau_i}}-1\right) } \partial_y H (x,0) \, .
\end{eqnarray}
\end{lemma}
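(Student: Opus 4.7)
The plan is to reduce the pair-PDE \eqref{eq:pdeH} restricted to the boundary line $y=0$ to a one-dimensional inhomogeneous linear ODE for $\phi(x) := H(x,0)$, and then solve it by the standard integrating factor technique. The ODE requires no undetermined function $K$ because the restriction to $y=0$ automatically fixes the initial condition $\phi(0) = H(0,0) = L(0,0) = 1$.

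First, I would evaluate the PDE \eqref{eq:pdeH} at $y=0$ to obtain
\begin{eqnarray*}
\phi'(x) - f_{ij}(x,0)\, \phi(x) \;=\; g_{ij}(x,0) - \partial_y H(x,0),
\end{eqnarray*}
where I used $\phi'(x) = \partial_x H(x,0)$. The right-hand side still contains the unknown boundary flux $\partial_y H(x,0)$, and the key algebraic observation is that the definition \eqref{eq:gij} of $g_{ij}$ itself encodes $\partial_y H(x,0)$ on the boundary: specializing \eqref{eq:gij} at $y=0$ and using $\partial_x H(0,0) = \beta_i$ yields precisely the identity \eqref{eq:newgij}. This relation can be inverted for $\partial_y H(x,0)$:
\begin{eqnarray*}
\partial_y H(x,0) \;=\; e^{-\tau_i \mu_{ij}\left(e^{x/\tau_i}-1\right)}\Bigl( g_{ij}(x,0) - \beta_i e^{\tau_i r_i\left(e^{x/\tau_i}-1\right)} \Bigr).
\end{eqnarray*}

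Substituting this into $g_{ij}(x,0) - \partial_y H(x,0)$ and collecting terms yields exactly the auxiliary function $f_i(x)$ defined in \eqref{eq:fi}. Thus $\phi$ solves the linear first-order ODE
\begin{eqnarray*}
\phi'(x) \;=\; f_{ij}(x,0)\, \phi(x) + f_i(x), \qquad \phi(0)=1.
\end{eqnarray*}
Multiplying by the integrating factor $\exp(-\int_0^x f_{ij}(u,0)\,du)$ and integrating on $[0,x]$ gives the announced formula.

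The only mild subtlety — and what I would flag as the main obstacle — is recognizing that the unknown boundary flux $\partial_y H(x,0)$ appearing when one restricts \eqref{eq:pdeH} to $y=0$ is algebraically pinned down by the very definition of $g_{ij}$, so that the two occurrences of $\partial_y H(x,0)$ (one explicit in the PDE, one hidden inside $g_{ij}$) combine to produce the closed-form source $f_i$. Once this cancellation is performed, the rest is a routine application of variation of constants; no analyticity or regularity argument is needed because the boundary values $\partial_x H(0,0)=\beta_i$ and $\partial_y H(0,0)=\beta_j$ (via \eqref{eq:defBeta}) guarantee all quantities are well defined along the line $y=0$.
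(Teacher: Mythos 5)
Your proposal is correct and follows essentially the same route as the paper's proof: restrict the PDE to $y=0$, use the definition of $g_{ij}$ specialized at $y=0$ (together with $\partial_x H(0,0)=\beta_i$) to eliminate the boundary flux $\partial_y H(x,0)$ and produce the source term $f_i$, then integrate the resulting linear ODE with $H(0,0)=1$. The cancellation you flag as the key step is exactly the observation the paper relies on.
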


\begin{proof}
When specified on the boundary $y=0$, the PDE \eqref{eq:pairPDE2} reads
\begin{eqnarray}\label{eq:boundEq}
\partial_x H(x,0) + \partial_y H(x,0) - f_{ij}(x,0) H(x,0) = g_{ij}(x,0) \, ,
\end{eqnarray}
which we can interpret as an equation about $x \mapsto H(x,0)$, up to the term $\partial_y H(x,0)$.
The latter partial derivative term can be further expressed in terms of the coefficient functions of the PDE \eqref{eq:pairPDE2}.
Indeed, using the definition \eqref{eq:gij}, we can write the inhomogeneous term $g_{ij}(x,0)$ under the form \eqref{eq:newgij},
where we have utilized that $\partial_x H (0,0)= \partial_u L(0,0)=\Exp{\lambda_i}=\beta_i$.
In particular, relation \eqref{eq:newgij} defines the partial derivative $\partial_y H (x,0)$  as
\begin{eqnarray}
\partial_y H (x,0) = e^{ \tau_i \mu_{ij}  \left(1- e^{\frac{x}{\tau_i}}\right)} 
\left(g_{ij}(x,0) - \beta_i e^{ \tau_i r_i  \left( e^{\frac{x}{\tau_i}}-1\right) } \right) \, .
\end{eqnarray}
In turn, upon substitution in \eqref{eq:boundEq}, we obtain the following linear ODE for $x \mapsto H(x,0)$
\begin{eqnarray}\label{eq:2mom}
\partial_x H(x,0) = f_{ij}(x,0) H(x,0) + f_i(x),
\end{eqnarray}
where  the auxiliary function $f_i$ is defined as in  \cref{lem:ODE} in terms of the coefficient function $x \mapsto g_{ij}(x,0)$.
Bearing in mind that $H(0,0)=1$, the solution to the above equation admits the integral representation
\begin{eqnarray}
H(x,0) = e^{\int_0^x f_{ij}(u,0) \, du} \left( 1+ \int_0^x f_i(u) \, e^{-\int_0^u f_{ij}(v,0) \, dv} \, du \right) \, .
\end{eqnarray}
\end{proof}


\subsection{Simplification of the homogeneous solution}\label{sec:simple}

Assuming the boundary terms known, the method of characteristic yields an infinity of solutions $H$
to the PDE \eqref{eq:pdeH} considered on the negative orthant $\mathbbm{R}^- \! \times \mathbbm{R}^-$.
We are interested in solutions $(x,y) \mapsto H(x,y)$
representing MGF functions $(u,v) \mapsto L(u,v)$ via the smooth change of variables \eqref{eq:varChange}.
As MGF functions must be analytic on the negative orthant $\mathbbm{R}^- \! \times \mathbbm{R}^-$,
$L$ must be bounded at $u=-\tau_i$ and $v=-\tau_j$, which implies that 
$H$ must remain bounded when $x \to -\infty$ or $y \to-\infty$.
This requirement of boundedness for $H$ in $x \to -\infty$ imposes severe constraints on the undetermined function $K$ featured in the integral representation \eqref{eq:genPDE} obtained via the method of characteristic.
In fact, given boundary terms, we show that there is only one function $K$ ensuring the boundedness of $H$ in $x \to -\infty$ and that this function can be specified in terms of the integral representation of $H$ on the boundary $y=0$.
This leads to an integral representation for bounded solutions to \eqref{eq:pdeH} without indeterminacy, which we give in the following lemma:

\begin{lemma}\label{lem:compRep}
The solutions to the linear first-order PDE \eqref{eq:pdeH} which are bounded 
at $x=-\infty$, $y=-\infty $ admit the following integral representation
\begin{eqnarray}\label{eq:compRep}
\hspace{20pt} H(x,y) = e^{\int_0^x f_{ij}(u,y-x+u) \, du} \int_{-\infty}^x g_{ij}(u,y-x+u) \, e^{-\int_0^u f_{ij}(v,y-x+v) \, dv} \, du  \, .
\end{eqnarray}  
\end{lemma}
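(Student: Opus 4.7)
My plan is to start from the general integral representation established in \cref{lem:PDE} and use the hypothesis of boundedness at $x=-\infty$ along each characteristic to pin down the undetermined function $K$, thereby obtaining the sharper representation \eqref{eq:compRep} without indeterminacy. Parameterizing by the characteristic variable $\xi = y - x$ and writing $I(x,\xi) = \int_0^x f_{ij}(u, \xi + u)\, du$, the generic solution of \eqref{eq:pdeH} takes the form
\begin{equation*}
H(x, x+\xi) = e^{I(x,\xi)} \left( K(\xi) + \int_0^x g_{ij}(u, \xi + u)\, e^{-I(u,\xi)}\, du \right) .
\end{equation*}
Since the characteristic $u \mapsto (u, u + \xi)$ drifts to $(-\infty,-\infty)$ as $u \to -\infty$, boundedness of $H$ will force a cancellation between the exponential prefactor and the bracketed term.

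The next step is to analyze the sign and asymptotic behavior of $f_{ij}$ on the negative orthant. Inspection of \eqref{eq:fij} shows that all three summands in $f_{ij}(x,y)$ are non-positive for $x, y \leq 0$, since $e^{x/\tau_i}-1$, $e^{y/\tau_j}-1$, and the exponent featuring the non-negative synaptic weights $\mu_{ik},\mu_{jk}$ are all non-positive. Moreover, as $x, y \to -\infty$,
\begin{equation*}
f_{ij}(x,y) \longrightarrow -b_i - b_j + \sum_{k \neq i,j}\!\left( e^{-\tau_i \mu_{ik} - \tau_j \mu_{jk}} - 1 \right)\beta_k \;=:\; -\gamma \;<\; 0 ,
\end{equation*}
with strict negativity following from $b_i, b_j > 0$. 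Consequently, $I(x,\xi) \geq 0$ for $x\leq 0$ and $I(x,\xi) \sim -\gamma x \to +\infty$ as $x \to -\infty$, so $e^{I(x,\xi)} \to +\infty$.

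Boundedness of $H$ along each characteristic then requires the bracketed term to vanish in the limit $x \to -\infty$, which pins down $K$ uniquely as
\begin{equation*}
K(\xi) \;=\; \int_{-\infty}^{0} g_{ij}(u, \xi + u)\, e^{-I(u,\xi)}\, du .
\end{equation*}
Substituting this expression back into the representation of \cref{lem:PDE} and splicing the two integrals via $\int_{-\infty}^0 + \int_0^x = \int_{-\infty}^x$ yields \eqref{eq:compRep} after reverting $\xi = y - x$.

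The main obstacle I anticipate is verifying that the improper integral defining $K(\xi)$ converges absolutely and that the passage to the limit inside the bracket is legitimate. Convergence should follow from the exponential decay $e^{-I(u,\xi)} \lesssim e^{\gamma u}$ as $u \to -\infty$, combined with the boundedness of $g_{ij}(u, \xi + u)$ along the characteristic, the latter being a consequence of the boundary fluxes $\partial_x H(0, \cdot)$ and $\partial_y H(\cdot, 0)$ remaining finite on $\mathbb{R}^-$, itself implied by analyticity of the underlying MGF $L$ on the negative orthant. These estimates should suffice to justify both the absolute convergence of the integral and the elimination of $K$, thereby completing the argument.
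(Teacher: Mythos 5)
Your argument is correct, and it rests on the same key mechanism as the paper's proof: since $f_{ij}(u,\xi+u)$ tends to a strictly negative constant as $u\to-\infty$, the prefactor $e^{I(x,\xi)}$ blows up as $x\to-\infty$, so boundedness of $H$ along each characteristic forces the bracketed term to vanish in the limit. The execution, however, is genuinely more direct than the paper's. You determine $K(\xi)=\int_{-\infty}^{0}g_{ij}(u,\xi+u)e^{-I(u,\xi)}\,du$ straight from this cancellation and splice the integrals, never touching \cref{lem:ODE}. The paper instead first eliminates $K$ by matching the general representation on the line $y=0$ against the ODE representation of \cref{lem:ODE}, introduces the auxiliary function $G(x,z)$, and only then invokes the boundedness argument; the payoff of that detour is the intermediate integral identity \eqref{eq:proto-fred}, which is not merely a step in the proof but the seed of the entire Fredholm system \eqref{eq:linFredholm} derived in the next subsection. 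Your route proves the lemma more cleanly but does not produce \eqref{eq:proto-fred}; you would recover it afterwards by equating your explicit formula for $K$ with the expression for $K$ obtained from \cref{lem:ODE} on the boundary $y=0$, so nothing is lost. Two small points: your stated limit $-\gamma$ with exponents $-\tau_i\mu_{ik}-\tau_j\mu_{jk}$ is in fact the correct one (the paper's \eqref{eq:auxG} carries a typo with $\mu_{ij},\mu_{ji}$ in place of $\mu_{ik},\mu_{jk}$, and its ``$u\to\infty$'' should read $u\to-\infty$); and your claim that $I(x,\xi)\ge 0$ for all $x\le 0$ is an over-statement when $\xi>0$, since the characteristic then leaves the negative orthant near $u=0$ — but this is immaterial, as only the asymptotics $I(x,\xi)\to+\infty$ is used. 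Your convergence discussion for the improper integral is sound and is, if anything, more careful than the paper's.
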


\begin{proof}
Specifying the general solution \eqref{eq:genPDE} obtained in  \cref{lem:PDE} on the line $y=0$ yields
\begin{eqnarray}\label{eq:auxBound}
\hspace{30pt} H(x,0) = e^{\int_0^x f_{ij}(u,u-x) \, du} \left( K(-x) + \int_0^x g_{ij}(u,u-x) \, e^{-\int_0^u f_{ij}(v,v-x) \, dv} \, du \right) \, ,
\end{eqnarray}
where $K$ is an unknown function representing the boundary condition of the linear first-order PDE \eqref{eq:pdeH}.
Utilizing the integral representation of $H(x,0)$ obtained in  \cref{lem:ODE}, the function $K$ can be expressed as
\begin{eqnarray}
\lefteqn{K(-x) = e^{\int_0^x \big( f_{ij}(u,0) - f_{ij}(u,u-x) \big) \, du} \left( 1+ \int_0^x f_i(u) \, e^{-\int_0^u f_{ij}(v,0) \, dv} \, du \right) } \nonumber\\
&& \hspace{200pt}- \int_0^x g_{ij}(u,u-x) \, e^{-\int_0^u f_{ij}(v,v-x) \, dv} \, du \, .
\end{eqnarray}
Substituting the above expression in \eqref{eq:auxBound} yields an expression that does not depend on the function $K$:
\begin{eqnarray}
\lefteqn{H(x,y) = e^{\int_0^x f_{ij}(u,y-x+u) \, du} \left( \int_{x-y}^x g_{ij}(u,y-x+u) \, e^{-\int_0^u f_{ij}(v,y-x+v) \, dv} \, du \right. } \nonumber\\
&& \hspace{50pt}+\left. e^{\int_0^{x-y} \big( f_{ij}(u,0) - f_{ij}(u,u+y-x) \big) \, du} \left( 1+\int_0^{x-y} f_i(u) \, e^{-\int_0^u f_{ij}(v,0) \, dv} \, du\right)\right) \, .
\end{eqnarray}
To further simplify our integral representation, let us introduce the new auxiliary function $G$ defined by 
\begin{eqnarray}
\lefteqn{G(x,z)  = \int_{z}^x g_{ij}(u,u-z) \, e^{-\int_0^u f_{ij}(v,v-z) \, dv} \, du  } \nonumber\\
&& \hspace{50pt}+ \; e^{\int_0^z \big( f_{ij}(u,0) - f_{ij}(u,u-z) \big) \, du} \left( 1+\int_0^z f_i(u) \, e^{-\int_0^u f_{ij}(v,0) \, dv} \, du\right) \, ,
\end{eqnarray}
which satisfies $H(x,y) = e^{\int_0^x f_{ij}(u,y-x+u) \, du} G(x,x-y)$.
The key observation is that for fixed $z$, we  have the asymptotic behavior
\begin{eqnarray}\label{eq:auxG}
\lim_{u \to \infty} f_{ij}(u,u-z) = - \left( b_i + b_j + \sum_{k \neq i,j} \left( e^{-\tau_i \mu_{ij} - \tau_j \mu_{ji}}-1\right) \beta_k\right) < 0 \, ,
\end{eqnarray}
which implies the divergence of the exponential factor intervening in the definition of $H$ in terms of $G$:
\begin{eqnarray}
\lim_{x \to -\infty} e^{\int_0^x f_{ij}(u,u-z) \, du} = \infty \, .
\end{eqnarray}
Thus, for the solution $H$ to remain bounded when $x \to - \infty$
and for all finite $z=x-y$, one must have that $\lim_{x \to -\infty} G(x,z) = 0$. 
Making this limit behavior explicit yields an integral equation about $g_{ij}$:
\begin{eqnarray}
& & \hspace{-1cm}
\int_{-\infty}^z g_{ij}(u,u-z) \, e^{-\int_0^u f_{ij}(v,v-z) \, dv} \, du\nonumber\\ & = &
e^{\int_0^z \big( f_{ij}(w,0) - f_{ij}(w,w-z) \big) \, dw} \left( 1+\int_0^z f_i(u) \, e^{-\int_0^u f_{ij}(v,0) \, dv} \, du\right) \, .
\label{eq:proto-fred}
\end{eqnarray}
Using the above integral equation in \eqref{eq:auxG} allows one to write the auxiliary function $G$ as
\begin{eqnarray}
G(x,z)  = \int_{-\infty}^x g_{ij}(u,u-z) \, e^{-\int_0^u f_{ij}(v,v-z) \, dv} \, du  \, ,
\end{eqnarray}
which implies the integral representation \eqref{eq:compRep} provided in  \cref{lem:compRep}.
\end{proof}


\subsection{System of integral equations for the boundary functions}\label{sec:intSys}

The integral representation \eqref{eq:compRep} of bounded solutions to the PDE \eqref{eq:pdeH}
features the auxiliary function $g_{ij}$ defined in \eqref{eq:gij}. 
The function $g_{ij}$ comprises the boundary terms of the PDE \eqref{eq:pairPDE2}, which involve
 the partial derivatives $\partial_x H \vert_{x=0}$ and $\partial_y H \vert_{y=0}$.
These partial derivatives admit a clear probabilistic interpretation via the Papangelou theorem of Palm calculus (see \Cref{sec:Palm}).
For instance, we have
\begin{eqnarray}\label{eq:propbInt}
\partial_y H (x,0)=\partial_vL(u,0) = \Exp{\lambda_j e^{u \lambda_i}} = \beta_j \Expj{ e^{u \lambda_i}} \, ,
\end{eqnarray}
showing that, up to the rescaling by the spiking rate $\beta_j$ and the change of variable given in
\eqref{eq:varChange}, the function $\partial_y H \vert_{y=0}$ is the MGF of $\lambda_i$ with respect to the Palm distribution associated to $N_j$.
A similar interpretation holds for $\partial_x H (0,y)$.
For conciseness, we denote the partial derivatives $\partial_y H \vert_{y=0}$ and
$\partial_x H \vert_{x=0}$ as the functions $h_i$ and $h_j$, respectively.
The functions $h_i$ and $h_j$ constitute the remaining unknowns of our problem,
as all other coefficient functions in \eqref{eq:compRep} have been elucidated.
In this section, we show that $h_i$ and $h_j$ satisfy a system of integral equations later specified in  \cref{lem:intSys}.

The first step of the derivation is to write the equation satisfied by $g_{ij}$ obtained in \eqref{eq:proto-fred}  as:
\begin{eqnarray}\label{eq:proto-fred2}
& & \hspace{-1cm} 1+\int_0^z f_i(u) \, e^{-\int_0^u f_{ij}(v,0) \, dv} \, du \nonumber\\
& &  =
e^{\int_0^z \big(f_{ij}(u,u-z) - f_{ij}(u,0)  \big) \, du} \int_{-\infty}^z g_{ij}(u,u-z) \, e^{-\int_0^u f_{ij}(v,v-z) \, dv} \, du \, . 
\label{eq:protof2}
\end{eqnarray}
Note that the function $f_i$ appearing in the above equation is actually defined in terms of $g_{ij}$ in \eqref{eq:fi}.
To simplify notation and exploit the symmetry of the problem, we further rewrite \eqref{eq:proto-fred2} 
using the following auxiliary functions
\begin{eqnarray}
\label{eq:mykl}
k_i(u) = e^{\tau_i r_i\left(e^{\frac{u}{\tau_i}}-1 \right)} \quad & \mathrm{and} &\quad k_j(u)=e^{\tau_j r_j\left(e^{\frac{u}{\tau_j}}-1 \right)} \, , \\
l_{ij}(u) = e^{\tau_i \mu_{ij} \left(e^{\frac{u}{\tau_i}}-1 \right)} \quad &  \mathrm{and} & \quad l_{ji}(u) = e^{\tau_j \mu_{ji} \left(e^{\frac{u}{\tau_j}}-1 \right)}  \, .
\end{eqnarray}
With these notations, we have
\begin{eqnarray}
g_{ij}(u,v) = h_j(v) l_{ji}(v)k_i(u) + h_i(u) l_{ij}(u)k_j(v),
\end{eqnarray}
so that \eqref{eq:protof2} can be written under a form involving two integral terms bearing on $h_i$ and $h_j$ respectively:
\begin{eqnarray}
 \lefteqn{1+\int_0^z f_i(u) \, e^{-\int_0^u f_{ij}(v,0) \, dv} \, du =} \nonumber\\
&& e^{-\int_0^z f_{ij}(u,0) \, du}
\left( \int_{-\infty}^z K_{ij}(z,u) h_i(u)\, du +  \int_{-\infty}^0 M_{ij}(z,u) h_j(u)\, du \right) \, .
\label{eq:protof3}
\end{eqnarray}
The integration kernels appearing in the right-hand side of the above equation are defined as
\begin{eqnarray}\label{eq:defK}
\begin{array}{ccc}
K_{ij}(z,u) &=& l_{ij}(u) k_j(u-z) e^{\int_u^z f_{ij}(v,v-z)\, dv}    \, , \\
M_{ij}(z,u) &=&  l_{ji}(u) k_i(u+z)   e^{\int_{u}^0 f_{ij}(v+z,v)\, dv} \, .
\end{array}
\end{eqnarray}
In turn, differentiating \eqref{eq:protof3} with respect to $z$ yields an equation without integral terms involving $f_i$
\begin{eqnarray} \label{eq:protof4a}
l_{ij}(z)h_i(z) &=& f_i(z) - \int_{-\infty}^z Q_{ij}(z,u) \, h_i(u) \, du - \int_{-\infty}^0   R_{ij}(z,u) \, h_j(u) \, du \, ,
\end{eqnarray}
at the cost of introducing the new integration kernels defined by
\begin{eqnarray}
\label{eq:myqr}
\begin{array}{ccccc}
Q_{ij}(z,u) 
&=& \partial_z K_{ij}(z,u) - f_{ij}(z,0) K_{ij}(z,u) \, , \vspace{5pt}  \\
&=& e^{\int_0^z f_{ij}(u,0) \, du} \partial_z \left[ K_{ij}(z,u)e^{-\int_0^z f_{ij}(u,0) \, du} \right] \, , \vspace{10pt} \\
R_{ij}(z,u) 
&=& \partial_z M_{ij}(z,u) - f_{ij}(z,0) M_{ij}(z,u)   \, , \vspace{5pt} \\
&=& e^{\int_0^z f_{ij}(u,0) \, du} \partial_z \left[ M_{ij}(z,u)e^{-\int_0^z f_{ij}(u,0) \, du} \right] \, .
\end{array}
\end{eqnarray}
By symmetry, considering the function $h_j$ yields a similar equation that reads
\begin{eqnarray} \label{eq:protof4b}
l_{ji}(z) h_j(z) &=& f_j(z) - \int_{-\infty}^0  R_{ji}(z,u) \, h_i(u) \, du - \int_{-\infty}^z  \ Q_{ji}(z,u) \, h_j(u) \, du \, .
\end{eqnarray}
As stated earlier, the functions $f_i$ and $f_j$ are defined in terms of $g_{ij}$ and thus involve boundary terms, i.e., $h_i$ and $h_j$.
In fact, using  results from  \cref{lem:ODE}, we have
\begin{eqnarray}
\begin{array}{ccccc}
f_i(z) &=& g_{ij}(z,0)\big(\displaystyle 1-1/l_{ij}(z)\big)+ \displaystyle \beta_i k_i(z)/l_{ij}(z) \, ,\vspace{5pt} \\
 &=&  h_j(0) k_i(z)+ (l_{ij}(z)-1)h_i(z) \, ,\vspace{10pt} \\
f_j(z) &=& g_{ij}(0,z)\big(\displaystyle 1-1/l_{ji}(z)\big)+ \displaystyle \beta_j k_j(z)/l_{ji}(z) \, ,\vspace{5pt} \\
&=&  h_i(0) k_j(z)+ (l_{ji}(z)-1)h_j(z) \, .
\end{array}
\end{eqnarray}
In both chains of equalities, the first equality is obtained by substituting \eqref{eq:newgij} in \eqref{eq:fi}, 
whereas the second equality stems from \eqref{eq:newgij} together with the fact that $h_i(0)=\beta_j$ and $h_j(0)=\beta_i$.
Expressing $f_i$ and $f_j$ in terms of $h_i$ and $h_j$ in  \eqref{eq:protof4a} and \eqref{eq:protof4b} produces the desired systems of integral equations, which we specify in the following lemma:

\begin{lemma}\label{lem:intSys}
The functions $h_i=\partial_y H \vert_{y=0}$  and $h_j=\partial_x H \vert_{x=0}$ satisfy the system of integral equations $h_i(z) = \mathcal{L}_i(h_i,h_j)$ and $h_j(z) = \mathcal{L}_j(h_i,h_j)$ where
\begin{eqnarray}\label{eq:linFredholm}
\begin{array}{ccc}
\hspace{25pt}  \mathcal{L}_i(h_i,h_j)= h_j(0) k_i(z) - \int_{-\infty}^z Q_{ij}(z,u) \, h_i(u) \, du - \int_{-\infty}^0   R_{ij}(z,u) \, h_j(u) \, du \, ,\\
\hspace{25pt}  \mathcal{L}_j(h_i,h_j)= h_i(0) k_j(z)  - \int_{-\infty}^0  R_{ji}(z,u) \, h_i(u) \, du - \int_{-\infty}^z  Q_{ji}(z,u) \, h_j(u) \, du \, ,
\end{array}
\end{eqnarray}
with
$h_i(0)=\beta_j$ and $h_j(0)=\beta_i$, $k_i$ and $k_j$ are defined in \eqref{eq:mykl}, and $Q$ and $M$ are defined in \eqref{eq:myqr}.
\end{lemma}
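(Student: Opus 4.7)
The proof plan is to collect and organize the chain of computations carried out in Sections~\ref{sec:simple} and~\ref{sec:intSys}, treating them as a sequence of algebraic reductions starting from the integral identity enforcing boundedness of $H$ as $x \to -\infty$. The starting point is the identity \eqref{eq:protof2}, which expresses the boundedness constraint from \cref{lem:compRep} and \cref{lem:ODE} in a compact form. From here, the proof is largely a matter of rewriting; there is no new analytic input beyond what was established in the previous lemmas.

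The first step is to exploit the explicit factorization of $g_{ij}$. Using the auxiliary functions $k_i, k_j, l_{ij}, l_{ji}$ introduced in \eqref{eq:mykl}, the inhomogeneous term splits cleanly as
\begin{equation*}
g_{ij}(u,v) = h_j(v) l_{ji}(v)k_i(u) + h_i(u) l_{ij}(u)k_j(v),
\end{equation*}
where $h_i := \partial_y H|_{y=0}$ and $h_j := \partial_x H|_{x=0}$. Substituting this decomposition into \eqref{eq:protof2} separates the right-hand side into two integral terms, one involving $h_i$ over $(-\infty, z]$ and one involving $h_j$ over $(-\infty, 0]$, with integration kernels $K_{ij}$ and $M_{ij}$ defined in \eqref{eq:defK}. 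This yields \eqref{eq:protof3}.

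The second step is to differentiate \eqref{eq:protof3} with respect to $z$ in order to remove the integral on $f_i$ that appears on the left-hand side. The differentiation produces boundary contributions at $u=z$ from the upper limit of the first integral and derivative contributions from the kernels themselves; absorbing the latter into new kernels $Q_{ij}$ and $R_{ij}$ defined by \eqref{eq:myqr} yields \eqref{eq:protof4a}. The symmetric identity \eqref{eq:protof4b} for $h_j$ follows by exchanging the roles of $i$ and $j$.

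The final step is to eliminate $f_i$ and $f_j$ in favor of $h_i$ and $h_j$. Substituting the expression \eqref{eq:newgij} for $g_{ij}(z,0)$ into the definition \eqref{eq:fi} of $f_i$ and simplifying gives $f_i(z) = h_j(0) k_i(z) + (l_{ij}(z)-1) h_i(z)$, using that $h_j(0) = \partial_u L(0,0) = \beta_i$ via the Palm-calculus identity \eqref{eq:propbInt}; the symmetric identity gives $f_j(z) = h_i(0) k_j(z) + (l_{ji}(z)-1) h_j(z)$ with $h_i(0)=\beta_j$. Plugging these into \eqref{eq:protof4a} and \eqref{eq:protof4b} causes the term $(l_{ij}(z)-1)h_i(z)$ to combine with $l_{ij}(z)h_i(z)$ on the left-hand side and leave the bare term $h_i(z)$, and similarly for $h_j(z)$. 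This produces the claimed fixed-point form $h_i(z) = \mathcal{L}_i(h_i,h_j)$ and $h_j(z) = \mathcal{L}_j(h_i,h_j)$ of \eqref{eq:linFredholm}.

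The only subtle point is the bookkeeping at the differentiation step: one must verify that the boundary contribution from differentiating $\int_{-\infty}^z K_{ij}(z,u) h_i(u)\,du$ at $u=z$ combines with the $l_{ij}(z) h_i(z)$ arising from differentiating the left-hand side of \eqref{eq:protof3}, and that the factor $e^{-\int_0^z f_{ij}(u,0)\,du}$ cancels consistently on both sides. Once this normalization is checked, the algebraic identity falls out exactly as displayed in \eqref{eq:linFredholm}.
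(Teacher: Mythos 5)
Your proposal is correct and follows essentially the same route as the paper: factor $g_{ij}$ via $k_i,k_j,l_{ij},l_{ji}$, substitute into the boundedness identity \eqref{eq:protof2} to obtain \eqref{eq:protof3}, differentiate in $z$ (picking up the boundary term $K_{ij}(z,z)h_i(z)=l_{ij}(z)h_i(z)$ and the kernels $Q_{ij},R_{ij}$), and then eliminate $f_i,f_j$ using $f_i(z)=h_j(0)k_i(z)+(l_{ij}(z)-1)h_i(z)$ with $h_j(0)=\beta_i$, $h_i(0)=\beta_j$. The bookkeeping you flag at the differentiation step is exactly the point the paper handles, and your cancellation of $(l_{ij}(z)-1)h_i(z)$ against $l_{ij}(z)h_i(z)$ is the correct final reduction.
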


\begin{remark}
One can check that we have $Q_{ij}(0,u)=-R_{ji}(0,u)$ and $R_{ij}(0,u)=-Q_{ji}(0,u)$, which is consistent with the above equations specialized for $z=0$. 
\end{remark}

The system of integral equations \eqref{eq:linFredholm}  consists of two coupled homogeneous Fredholm equations of the second kind~\cite{Polyanin:2008}.
The kernels intervening in both Fredholm equations exhibit Dirac singularities in zero.
For such singular kernels, Fredholm theory does not guarantee the existence of nontrivial solutions~\cite{Edmunds:2018}.
However, in all generality,  the system of  equations \eqref{eq:linFredholm}  admits solutions if the linear operator $\mathcal{L} = (\mathcal{L}_i,\mathcal{L}_j)$ has a unit eigenvalue, in which case there is an infinite number of solutions given by the corresponding eigenfunctions.
Further specifying the solution to the PDE  \eqref{eq:pairPDE2} requires an additional constraint which can be obtained by specifying \eqref{eq:proto-fred2} for $z=0$, yielding:
\begin{eqnarray}\label{eq:norm}
 \lefteqn{1
=
 \int_{-\infty}^0 g_{ij}(u,u) \, e^{-\int_0^u f_{ij}(v,v) \, dv} \, du  
 = } \nonumber\\
 && \hspace{60pt} \int_{-\infty}^0 K_{ij}(0,u) h_i(u)\, du +  \int_{-\infty}^0 M_{ij}(0,u) h_j(u)\, du  \, . 
\end{eqnarray}
Note that by symmetry, we consistently have $K_{ij}(0,u)=M_{ji}(0,u)$ and $K_{ji}(0,u)=M_{ij}(0,u)$. 
If the unit eigenvalue is simple, there is at most one unit eigenfunction of $\mathcal{L}$
satisfying the above normalization condition.
Moreover, the interpretation of $h_i$ and $h_j$ in terms of MGFs
\eqref{eq:propbInt} further imposes analyticity constraints on candidate eigenfunctions to be solution.
Specifically, $h_i$ and $h_j$ must be completely monotonic functions when specified in the $(u,v)$ variables.
In \Cref{sec:probint}, we confirm that there is indeed a unique solution to the system of equations
\eqref{eq:linFredholm} satisfying the normalization constraint \eqref{eq:norm} by giving the
probabilistic interpretation of the problem in terms of the embedded Markov chains of the dynamics~\cite{Asmussen:2003}.


\section{The pair-transfer function}
\label{sec:transfer}


In this section, we exploit the reduction of the pair-PDE \eqref{eq:pairPDE} to the system of integral Fredholm equations \eqref{eq:linFredholm} and \eqref{eq:norm} to characterize the stationary state of a neuronal pair.
In particular, we study the stationary pair-transfer function, which specifies the output rates and output correlations as functions of the  input rates $\beta_k$, $k \neq i,j$ and of the connectivity weights $\mu_{ik}$, $k\neq i$ and $\mu_{jk}$, $k \neq j$.
In  \Cref{sec:statsec}, we determine how to derive the stationary second-order statistics 
from the stationary intensities via analytic arguments.
In  \Cref{sec:nodrive}, we analytically characterize the stationary state of an isolated neuronal pair.
In \Cref{sec:drive}, we perform the numerical analysis of the general case, i.e., subjected to independent Poissonian bombardment, via a fixed-point iterative scheme. 


\subsection{Stationary second-order statistics}\label{sec:statsec}

The core motivation for solving the pair-PDE \eqref{eq:pairPDE} is to account for pairwise correlation in LGL neural networks.
Second-order statistics are not directly accessible from our reduction of the problem to a set of integral Fredholm equations \eqref{eq:linFredholm}, where the mean intensities $\beta_i$ and $\beta_j$ plays the prominent role.
However, the second-order statistics of the state $\bm{\lambda} = \big(\lambda_i, \, \lambda_j\big)$ can be derived from the MGF function $L$, and thus from the transformed function $H$, as partial derivatives in $(0,0)$.
For instance, we have 
\begin{eqnarray}
\begin{array}{ccccc}
\Exp{\lambda_i^2} &=&\partial_u^2L(0,0)&=&\partial_x^2H(0,0) - \beta_i/\tau_i \, ,\vspace{5pt} \\ 
\Exp{\lambda_i \lambda_j} &=&\partial_u\partial_v L(0,0)&=&\partial_x\partial_y H(0.0) \, .
\end{array}
\end{eqnarray}
Thus, the second-order moments of the stationary intensities can be related to the mean intensities
$\beta_i$ and $\beta_j$ from the analysis of the PDE \eqref{eq:pairPDE2} performed in the prior sections.
Specifically, differentiating the linear ODE \eqref{eq:2mom} satisfied by $x \mapsto H(x,0)$ leads to an equation about $\partial_x^2H(0,0)$:
\begin{eqnarray}
\partial_x^2H(0,0) = \partial_x f_{ij}(0,0) + f_{ij}(0,0) \partial_xH(0,0) + \partial_x f_i(0) \, .
\end{eqnarray}
From the definition of $f_{ij}$ in \eqref{eq:fij} and the definition of $f_i$ in \eqref{eq:fi}, 
we observe that $f_{ij}(0,0)=0$ and we evaluate
\begin{eqnarray}
\partial_x f_{ij}(0,0) = \frac{b_i}{\tau_i} +\sum_{k \neq i,j} \mu_{ik} \beta_k \quad \mathrm{and} \quad \partial_x f_{i}(0) = \beta_j \mu_{ij} + \beta_i r_i \, .
\end{eqnarray}
This shows that the second-order moment of the stationary intensity $\lambda_i$ satisfies
\begin{eqnarray}\label{eq:secmom}
\Exp{\lambda_i^2} = \frac{b_i-\beta_i}{\tau_i} + r_i \beta_i +\sum_{j\neq i} \mu_{ij} \beta_j \, .
\end{eqnarray}
Remarkably, the functional dependence of $\Exp{\lambda_i^2}$ on the mean intensities $\beta_k$, $1 \leq k \leq n$, is the same as for first-order RMFs~\cite{Baccelli:2019aa}.

In principle, the mixed moment $\Exp{\lambda_i \lambda_j}$ can be derived from similar considerations about the transformed MGF $H$. 
Unfortunately, there seems to be no simple formula relating $\Exp{\lambda_i \lambda_j}$ to the mean intensities $\beta_i$ and $\beta_j$.
Rather, one has to observe that the mixed moment $\Exp{\lambda_i \lambda_j}$ is naturally expressed in
term of the boundary functions $h_i$ as we have $h'_i(0)=h'_j(0)=\partial_x\partial_y H(0,0)=\Exp{\lambda_i \lambda_j}$.
This leads to evaluate the mixed moments from the knowledge of $h_i$ and $h_j$ via the following integral equation
\begin{eqnarray}\label{eq:cross}
\hspace{15pt} h'_i(0) &=& \beta_j (r_i + r_j) - \int_{-\infty}^0 \partial_z Q_{ij}(0,u) \, h_i(u) \, du - \int_{-\infty}^0   \partial_z R_{ij}(0,u) \, h_j(u) \, du \, \, ,
\end{eqnarray}
obtained by differentiating the first equation of \eqref{eq:linFredholm} with respect to $z$ and by noticing that $Q_{ij}(0,0)=-r_j$. 
Observe that differentiating the second equation of \eqref{eq:linFredholm} yields the same equation, as one can check that $\partial_zQ_{ij}(0,u)=\partial_zR_{ji}(0,u)$ and $\partial_zR_{ij}(0,u)=\partial_zQ_{ji}(0,u)$.
Evaluating \eqref{eq:cross} is the approach we will take in \Cref{sec:drive} to numerically characterize the functional dependence of the pair correlations on input rates and synaptic weights.
However, before proceeding to this numerical analysis, we first give the full analytical solution of the pair-PDE  \eqref{eq:pairPDE} in  a simplifying limit obtained by neglecting relaxation, i.e., for $\tau_i, \tau_j \to \infty$, and under the additional assumption that the pair is isolated, i.e., with $\beta_k=0$, $k\neq i,j$.


\subsection{Exact solutions without external drive}\label{sec:nodrive}

\begin{figure}
  \centering
  \includegraphics[width=0.9\textwidth]{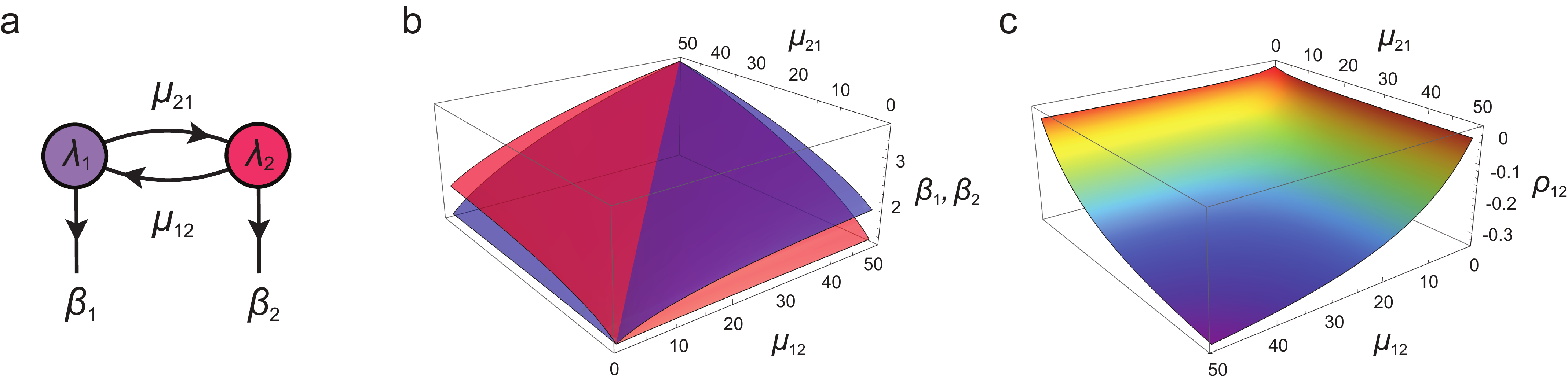}
  \caption{{\bf Stationary rates and correlations without external drive.}
  {\bf a.} An isolated pair of neurons $(1,2)$ interacting via synaptic weights $\mu_{21}$ and $\mu_{12}$,  with base rate $b_1=b_2=1$, and output stationary rate $(\beta_1,\beta_2)$.
  {\bf b.} Dependence of the stationary rates, i.e. of the mean stochastic intensities $(\beta_1,\beta_2)=(\Exp{\lambda_1},\Exp{\lambda_2})$, on the synaptic weights $\mu_{21}$ and $\mu_{12}$: $\beta_1>\beta_2$, whenever $\mu_{12}>\mu_{21}$.
  {\bf c.} Dependence of the stationary correlation between stochastic intensities, i.e. $\rho_{12} $, on the synaptic weights $\mu_{21}$ and $\mu_{12}$: the correlations are increasingly negative with increasing interaction strength because of the reset rule.
  }
  \label{fig:exact}
\end{figure}

Without external drive, i.e., for $\beta_k=0$, $k \neq i,j$, and without relaxation, i.e., for $\tau_i, \tau_j \to \infty$,
it is possible to solve exactly the system of integral equations \eqref{eq:linFredholm} 
and the normalization condition \eqref{eq:norm}.  Specifically, we have:

\begin{lemma}\label{lem:exactExpr}
For $\beta_k=0$, $k \neq i,j$, and for $\tau_i, \tau_j \to \infty$, the function $h_i$ solution of
the system equations \eqref{eq:linFredholm} and \eqref{eq:norm} is
\begin{eqnarray}
\hspace{25pt} h_i(z) = \frac{r_ir_j}{A_i r_i+A_j r_j -1} \frac{e^{\frac{r_j e^{\mu_{ij} z}}{\mu_{ij}}-(r_i+r_j)z} }{\mu_{ij}} \left(\frac{\mu_{ij}}{r_j}\right)^{\frac{r_i+r_j}{\mu_{ij}}} \gamma\left(\frac{r_i+r_j}{\mu_{ij}}, \frac{r_je^{\mu_{ij} z}}{\mu_{ij}}\right) \, ,
\end{eqnarray}
where $\gamma$ denotes the lower incomplete gamma function and where the constant $A_i$ and $A_j$ are given by
\begin{eqnarray}
A_i &=& \frac{e^{\frac{r_j}{\mu_{ij}}} }{\mu_{ij}} \left(\frac{\mu_{ij}}{r_j}\right)^{\frac{r_i+r_j}{\mu_{ij}}} \gamma\left(\frac{r_i+r_j}{\mu_{ij}}, \frac{r_j}{\mu_{ij}}\right) \, , \\
A_j &=& \frac{e^{\frac{r_i}{\mu_{ji}}} }{\mu_{ji}} \left(\frac{\mu_{ji}}{r_i}\right)^{\frac{r_i+r_j}{\mu_{ji}}} \gamma\left(\frac{r_i+r_j}{\mu_{ji}}, \frac{r_j}{\mu_{ji}}\right) \, .
\end{eqnarray}
\end{lemma}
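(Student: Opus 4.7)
\textbf{Plan for \cref{lem:exactExpr}.} My plan is to exploit the drastic simplifications that occur in the prescribed limit to reduce the coupled Fredholm system \eqref{eq:linFredholm} together with the normalization \eqref{eq:norm} to two uncoupled Volterra equations, each of which can be converted to a first-order linear ODE solvable in closed form via the lower incomplete gamma function.

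First, I would observe that for $\beta_k=0$, $k\neq i,j$, and in the limit $\tau_i,\tau_j\to\infty$, the auxiliary functions in \eqref{eq:mykl} flatten to pure exponentials ($k_i(u)\to e^{r_iu}$, $l_{ij}(u)\to e^{\mu_{ij}u}$) and $f_{ij}\equiv 0$. Accordingly, from \eqref{eq:defK}--\eqref{eq:myqr}, the kernels reduce to $Q_{ij}(z,u)\to -r_je^{\mu_{ij}u+r_j(u-z)}$ and $R_{ij}(z,u)\to r_ie^{\mu_{ji}u+r_i(u+z)}$. The crucial observation is that the $z$-dependence of $R_{ij}$ is a pure prefactor $e^{r_iz}$, so $\int_{-\infty}^0 R_{ij}(z,u)h_j(u)\,du = r_iG\,e^{r_iz}$ with $G=\int_{-\infty}^0 e^{(\mu_{ji}+r_i)u}h_j(u)\,du$ a scalar. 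Consequently, each equation in \eqref{eq:linFredholm} becomes a Volterra-type integral equation for a single unknown function, coupled to the other only through the scalars $G$ and $G'=\int_{-\infty}^0 e^{(\mu_{ij}+r_j)u}h_i(u)\,du$.

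Next, I would convert the Volterra equation for $h_i$ into an ODE by differentiation. Setting $H_i(z)=\int_{-\infty}^z e^{(\mu_{ij}+r_j)u}h_i(u)\,du$ and inserting the representation $h_i(z)=(\beta_i-r_iG)e^{r_iz}+r_je^{-r_jz}H_i(z)$ into $H_i'(z)=e^{(\mu_{ij}+r_j)z}h_i(z)$ yields
\begin{equation*}
H_i'(z)-r_je^{\mu_{ij}z}H_i(z)=(\beta_i-r_iG)\,e^{(\mu_{ij}+r_i+r_j)z}.
\end{equation*}
Its integrating factor is $e^{-r_je^{\mu_{ij}z}/\mu_{ij}}$; imposing $H_i(-\infty)=0$, required for the boundedness established in \cref{lem:compRep}, and performing the change of variable $t=r_je^{\mu_{ij}u}/\mu_{ij}$ in the resulting primitive produces a lower incomplete gamma function with parameter $(r_i+r_j)/\mu_{ij}+1$. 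Back-substitution into the Volterra representation, followed by the identity $\gamma(s+1,x)=s\gamma(s,x)-x^se^{-x}$, collapses the answer to the form displayed in the lemma, up to the scalar prefactor $\beta_i-r_iG$. A symmetric argument handles $h_j$ in terms of $\beta_j-r_jG'$.

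It remains to identify the four scalars $\beta_i,\beta_j,G,G'$ from four linear conditions: the boundary values $h_i(0)=\beta_j$ and $h_j(0)=\beta_i$; the evaluations $H_i(0)=G'$ and $H_j(0)=G$, which upon inspection produce precisely the constants $A_i$ and $A_j$ of the statement; and the normalization \eqref{eq:norm}, which in this limit collapses to $G+G'=1$ since $K_{ij}(0,u)\to e^{(\mu_{ij}+r_j)u}$ and $M_{ij}(0,u)\to e^{(\mu_{ji}+r_i)u}$. A short calculation then yields the consistency relation $\beta_iA_i=\beta_jA_j$ and, after using $G+G'=1$, the value $\beta_i=r_ir_jA_j/(r_iA_i+r_jA_j-1)$, from which the claimed prefactor $r_ir_j/(r_iA_i+r_jA_j-1)$ follows. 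The hard part is purely algebraic: tracking the gamma-function identity through the shift from $\gamma(s+1,\cdot)$ back to $\gamma(s,\cdot)$, and then propagating the unknown scalars through the normalization to recover the specific denominator $r_iA_i+r_jA_j-1$; everything else is a direct consequence of the decoupling into uncoupled Volterra problems in this limit.
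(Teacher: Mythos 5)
Your proposal is correct and follows essentially the same route as the paper: reduce the Fredholm system to decoupled first-order linear problems using the fact that the cross-term becomes a constant times $e^{r_iz}$ in the no-drive limit, solve by integrating factor with the boundedness condition at $-\infty$ to obtain an incomplete gamma function, and close the system with the four scalar constraints. The only cosmetic difference is that you solve an ODE for the antiderivative $H_i(z)=\int_{-\infty}^z e^{(\mu_{ij}+r_j)u}h_i(u)\,du$, which lands you on $\gamma(s+1,\cdot)$ and forces an extra step via the recurrence $\gamma(s+1,x)=s\gamma(s,x)-x^se^{-x}$; the paper instead differentiates the Volterra equation once to get an ODE directly for $h_i$, namely $h_i'(z)=r_j(e^{\mu_{ij}z}-1)h_i(z)+(r_i+r_j)(\beta_i-r_iC_i)e^{r_iz}$, which produces $\gamma(s,\cdot)$ immediately. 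Likewise, your conditions $H_i(0)=G'$, $H_j(0)=G$ are equivalent (given $h_i(0)=\beta_j$, $h_j(0)=\beta_i$) to the paper's single scalar relation $\beta_j-\beta_i=r_jC_j-r_iC_i$, so the linear system you solve for $\beta_i,\beta_j,G,G'$ is the same as the paper's for $\beta_i,\beta_j,C_i,C_j$. One small caveat worth flagging if you carry out the computation: the exponential prefactor you will actually obtain is $e^{r_je^{\mu_{ij}z}/\mu_{ij}-r_jz}$, not the $e^{r_je^{\mu_{ij}z}/\mu_{ij}-(r_i+r_j)z}$ that appears in the displayed statement — one can check by differentiating at $z=0$ that the displayed exponent is inconsistent with $h_i'(0)=(r_i+r_j)(\beta_i-r_iC_i)$, so the statement contains a typographical slip that your derivation will (correctly) not reproduce.
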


\begin{proof}
Setting $\beta_k=0$, $k \neq i,j$ in expressions \eqref{eq:kerQ} 
yields the following form for the system of integral equations \eqref{eq:linFredholm}:
\begin{eqnarray}
\label{eq:Fredh1}
\hspace{20pt} h_i(z) &=& \beta_i e^{r_i z}  + r_j e^{-r_j z}\int_0^z   e^{(r_j + \mu_{ij}) u}\, h_i(u) \, du - r_i \int_0^1  e^{(r_i+\mu_{ji})u} \, h_j(u) \, du \, , \\
\hspace{20pt} \label{eq:Fredh2}
h_j(z) &=& \beta_j e^{r_j z}   - r_j \int_0^1  e^{(r_j+\mu_{ij})u} \, h_i(u) \, du + r_i e^{-r_i z}\int_0^z   e^{(r_i + \mu_{ji}) u}\, h_j(u) \, du \, .
\end{eqnarray}
The resolution of the above system is possible because the fixed bound integral terms are constants which we denote by
\begin{eqnarray}
C_i = \int_0^1  e^{(r_i+\mu_{ji})u} \, h_j(u) \, du \quad \mathrm{and} \quad
C_j = \int_0^1  e^{(r_j+\mu_{ij})u} \, h_i(u) \, du  \, .
\end{eqnarray}
These constants $C_i$ and $C_j$ are simply related via the normalization condition \eqref{eq:norm}:
\begin{eqnarray}\label{eq:exnorm}
C_i+C_j=1 \,  .
\end{eqnarray}
Moreover, specifying equation \eqref{eq:Fredh1} for $z=0$ and using that $h_i(0)= \beta_j$, we have
\begin{eqnarray}\label{eq:exspe}
\beta_j - \beta_i = r_j C_j - r_i C_i \, .
\end{eqnarray}
In turn, differentiating equation \eqref{eq:Fredh1} with respect to $z$ yields
\begin{eqnarray}
h'_i(z) 
&=& \beta_i r_i e^{r_i z}  + r_j e^{\mu_{ij}z} h_i(z) \\
&& \hspace{40pt}-r_j \left(r_j e^{-r_j z}\int_0^z   e^{(r_j + \mu_{ij}) u}\, h_i(u) \, du \right) -r_i^2e^{r_i z} C_i\, ,  \nonumber\\
&=& r_j \left( e^{\mu_{ij}z}-1\right) h_i(z)+(r_i+r_j) (\beta_i-r_i C_i)e^{r_i z} \, ,
\end{eqnarray}
where the second equality follows from injecting the expression of $h_i$ given by \eqref{eq:Fredh1}.
The above equation is a first-order linear differential equation whose unique bounded solution in
$z \to -\infty$ can be obtained by the method of the variation of the constant:
\begin{eqnarray}
h_i(z)= (r_i+r_j)(\beta_i-r_i C_i) \int_{-\infty}^z e^{r_i u + \int_u^z r_j \left( e^{\mu_{ij}v}-1\right) \, dv} \, du \, .
\end{eqnarray}
There remains to evaluate the constants $\beta_i$, $\beta_j$, and $C_i$, $C_j$.
To this end, let us introduce the constants
\begin{eqnarray}\label{eq:AiAj}
\hspace{30pt} A_i = \int_{-\infty}^0 e^{r_i u + \int_u^0 r_j \left( e^{\mu_{ij}v}-1\right) \, dv} \, du \, \quad \mathrm{and} \quad A_j = \int_{-\infty}^0 e^{r_j u + \int_u^0 r_i \left( e^{\mu_{ji}v}-1\right) \, dv} \, du \, .
\end{eqnarray}
Using that $h_i(0)= \beta_j$ and $h_j(0)= \beta_i$, we have
\begin{eqnarray}
\beta_j = (r_i+r_j)A_i (\beta_i - r_i C_i) \quad \mathrm{and} \quad \beta_i = (r_i+r_j)A_j (\beta_j - r_j C_j) \, .
\end{eqnarray}
Solving the above equations together with \eqref{eq:exnorm} and \eqref{eq:exspe} for $\beta_i$, $\beta_j$, $C_i$, $C_j$ yields:
\begin{eqnarray}
&\displaystyle \beta_i = \frac{A_j r_i r_j}{A_i r_i+A_j r_j -1}  \, , \quad \beta_j = \frac{A_i r_i r_j}{A_i r_i+A_j r_j -1} \, ,& \\
&\displaystyle \beta_i-r_iC_i= \beta_j-r_jC_j=\frac{r_i r_j}{(r_i+r_j)(A_i r_i+A_j r_j -1)} \, .&
\end{eqnarray}
The expression announced in the lemma follows from evaluating the following integral 
\begin{eqnarray}
\lefteqn{ \int_{-\infty}^z e^{r_i u + \int_u^z r_j \left( e^{\mu_{ij}v}-1\right) \, dv} \, du
 = } \nonumber\\
 && \hspace{60pt} \frac{e^{\frac{r_j e^{\mu_{ij} z}}{\mu_{ij}}-(r_i+r_j)z} }{\mu_{ij}} \left(\frac{\mu_{ij}}{r_j}\right)^{\frac{r_i+r_j}{\mu_{ij}}} \gamma\left(\frac{r_i+r_j}{\mu_{ij}}, \frac{r_je^{\mu_{ij} z}}{\mu_{ij}}\right)
\end{eqnarray}
in terms of the lower incomplete Gamma function $\gamma$ with
\begin{eqnarray}
A_i = \frac{e^{\frac{r_j}{\mu_{ij}}} }{\mu_{ij}} \left(\frac{\mu_{ij}}{r_j}\right)^{\frac{r_i+r_j}{\mu_{ij}}} \gamma\left(\frac{r_i+r_j}{\mu_{ij}}, \frac{r_j}{\mu_{ij}}\right) \, .
\end{eqnarray}
Observe that the derivative $h'_i(0) = h'_j(0)$ indicating the mixed moment $\Exp{\lambda_i \lambda_j}$ is given by: 
\begin{eqnarray}
h'_i(0) = h'_j(0)= \frac{r_ir_j}{A_i r_i+A_j r_j -1} \, .
\end{eqnarray}
\end{proof}

In  \cref{fig:exact}, we illustrate how the stationary intensities and stationary correlations of
an isolated pair of neurons $(1,2)$ depend on the synaptic weights $\mu_{12}$ and $\mu_{21}$. 
As expected, the stationary intensities increase with interaction strength and the neuron with highest intensity
is that for which the incoming synapse has larger weight than the outbound one's.
In general, isolated mutually exciting neuronal pairs exhibit negative stationary correlations.
Indeed, the definitions \eqref{eq:AiAj} implies that $0 \leq A_i r_i , A_j r_j\leq1$, so that the crosscorrelation coefficient $\rho$, which satisfies
\begin{eqnarray}
\rho=-\frac{r_ir_j}{A_i r_i+A_j r_j -1} ( A_i r_i -1)( A_j r_j -1) \leq 0 \, ,
\end{eqnarray}
is necessarily nonpositive.
These nonpositive correlations are due to the reset rule and the positive assumptions on interactions:
each time neuron $1$ spikes, its intensity resets to level $b_1=1$, while the other neuron's
intensity increases by an amount $\mu_{21}$. 
Accordingly, the larger the synaptic weights, the more negative the correlations.


\subsection{Integral equation solution with external drive}\label{sec:drive}

\begin{figure}
  \centering
  \includegraphics[width=\textwidth]{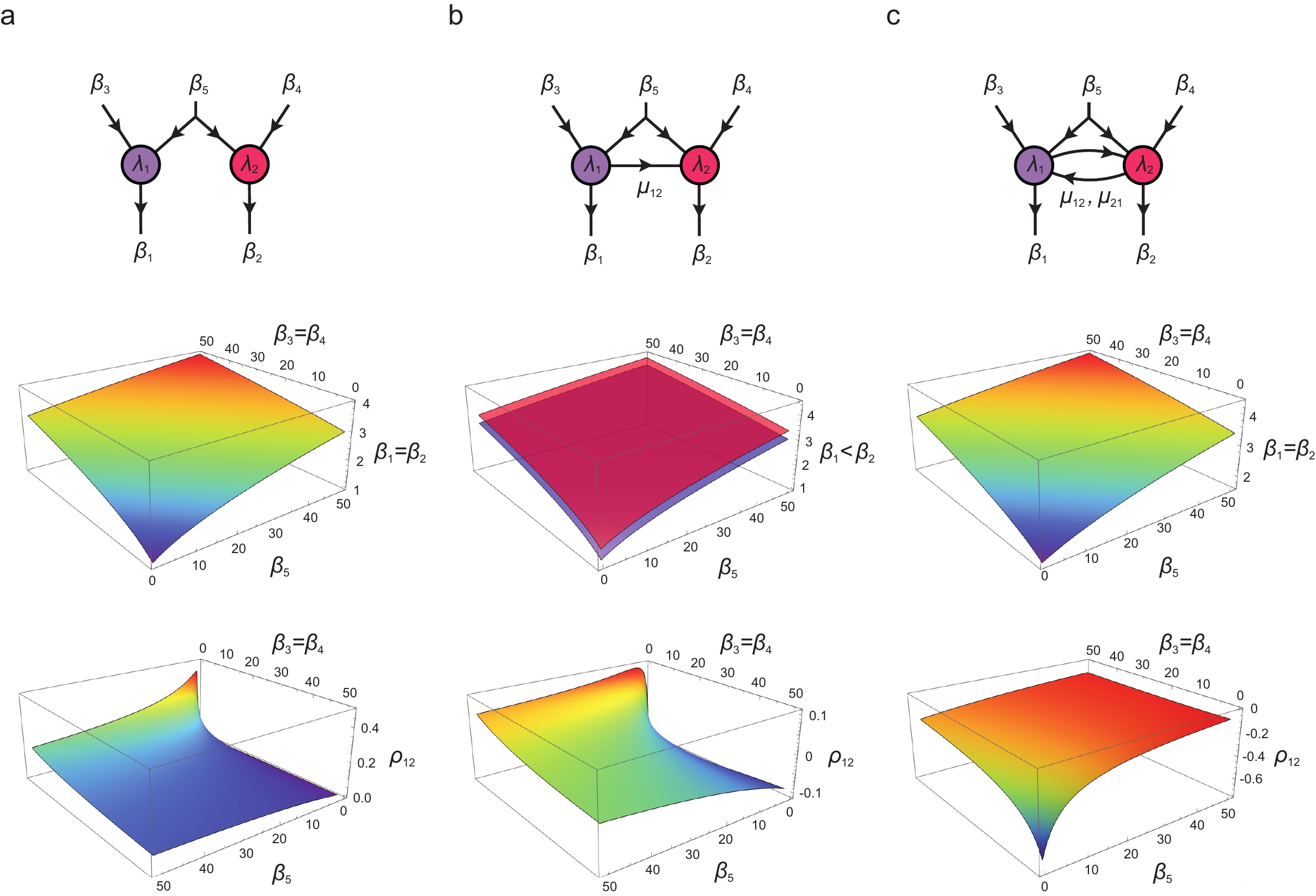}
  \caption{{\bf Stationary rates and correlations with external drive.}
 Stationary intensities $(\beta_1,\beta_2) = (\Exp{\lambda_1},\Exp{\lambda_2})$ and stationary correlations $\rho_{12}$ of a pair of neurons $(1,2)$ receiving private inputs from neurons $3$ and $4$ and shared input from neuron $5$, via synaptic unit weights.
  {\bf a.} Symmetric non-interacting pair: synchronous, shared input deliveries promotes positive correlations within the neuronal pair.
  {\bf b.} Non-symmetric interacting pair: correlations are positive when synchronous, shared input dominates ($\beta_5 \gg \beta_3,\beta_4$), whereas correlations are negative when independent private input dominates ($\beta_5 \ll \beta_3,\beta_4$).
  {\bf c.} Symmetric interacting pair: independent, private input deliveries promotes negative correlations within the neuronal pair.
  }
  \label{fig:num}
\end{figure}

In the presence of external drives, there are no known closed-form expressions for the functions $h_i$ and $h_j$ satisfying the system of Fredholm integral equations \eqref{eq:linFredholm} and the normalization condition \eqref{eq:norm}.
However, the latter equations naturally determine a fixed-point iterative algorithm,
which allows us to specify numerically $h_i$ and $h_j$, and thus the stationary intensities
$\beta_i$ and $\beta_j$, as well as the stationary second-order statistics.
We find this fixed-point iterative algorithm to be numerically stable, converging toward the unique solution of our problem.
However, the proof of such an algorithmic convergence is beyond the scope of this work. 

The algorithm proceeds as follows:
Let us denote by $h_{i,0}$ and $h_{j,0}$ our initial guesses for $h_i$ and $h_j$.
For instance, one can use the analytically known expressions for $h_i$ and $h_j$ in the absence of drive and interactions, i.e.:
\begin{eqnarray}
h_{i,0}(z) = r_j e^{b_i z} \quad \mathrm{and} \quad h_{j,0}(z) = r_i e^{b_j z} \, ,
\end{eqnarray}
with corresponding stationary intensities $\beta_{i,0}=h_{j,0}(0)=r_i$ and $\beta_{j,0}=h_{i,0}(0)=r_j$.
Then, for fixed external rates $\beta_k$, $k \neq i,j$, we define the sequence of functions $h_{i,n}$ and $h_{j,n}$
via the iterative scheme 
\begin{eqnarray}\label{eq:itscheme1}
h_{i,n+1}(z) &=& \nonumber\\
&& \hspace{-20pt} \left( \beta_{i,n} k_i(z) - \int_{-\infty}^z Q_{ij}(z,u) \, h_{i,n}(u) \, du - \int_{-\infty}^0   R_{ij}(z,u) \, h_{j,n}(u) \, du \right) \Big/\mathcal{N}_n\, , 
\end{eqnarray}
\begin{eqnarray}\label{eq:itscheme2}
h_{j,n+1}(z) &=&  \nonumber\\
&& \hspace{-20pt} \left( \beta_{j,n}k_j(z) - \int_{-\infty}^z  Q_{ji}(z,u) \, h_{j,n}(u) \, du - \int_{-\infty}^0  R_{ji}(z,u) \, h_{i,n}(u) \, du \right) \Big/\mathcal{N}_n \, ,
\end{eqnarray}
where the constant $\mathcal{N}_n$ follows from the normalization condition \eqref{eq:norm}:
\begin{eqnarray}\label{eq:itscheme3}
\mathcal{N}_n
 =
  \int_{-\infty}^0 K_{ij}(0,u) h_{i,n}(u)\, du +  \int_{-\infty}^0 M_{ij}(0,u) h_{j,n}(u)\, du  \, . 
\end{eqnarray}
The sequence of intensity estimates is consistently evaluated as 
\begin{eqnarray}\label{eq:itscheme4}
\beta_{j,n+1} = h_{i,n+1}(0) \quad \mathrm{and} \quad \beta_{i,n+1} = h_{j,n+1}(0)\, .
\end{eqnarray}
Converging sequences obtained via the above iterative scheme necessarily converge toward the unique solution to
the system of equations \eqref{eq:linFredholm} and \eqref{eq:norm}.
The stationary moments are obtained from the numerical stationary intensities via \eqref{eq:secmom},
whereas the stationary mixed moment is computed from the numerical functions $h_i$ and $h_j$ via \eqref{eq:cross}.

In  \cref{fig:num}, we illustrate how the numerical stationary intensities and the numerical stationary
correlations of a pair of neurons depend on the external drives.
Specifically, we consider a pair of neurons $(1,2)$ subjected to spike deliveries from three independent
Poissonian neurons via unit synaptic weights. 
Of these external neurons, two neurons deliver separately to a single neuron of the pair
with rates $\beta_3$ and $\beta_4$, representing private inputs, while the remaining neuron delivers
to both neurons with rate $\beta_5$, representing a shared input.
We analyze the contribution of private and shared inputs to the stationary intensities $(\beta_1,\beta_2)$ of a neuronal pair by varying the private rates $\beta_3=\beta_4$ and the shared rate $\beta_5$, for three distinct cases: no interaction (\cref{fig:num}a), unidirectional interaction (\cref{fig:num}b),
and symmetric interactions within the pair (\cref{fig:num}c).
For all conditions, and not surprisingly, the stationary intensities $(\beta_1,\beta_2)$ monotonically depend
on the external rates of spiking deliveries via positive synaptic rates.
By contrast, correlations depend more markedly on the interplay between external inputs and pairwise interactions.
In the absence of pairwise interactions, shared input promotes positive correlation,
whereas private inputs erased correlations (\cref{fig:num}a).
For unidirectional interactions with unit weight, increasing the rate of private inputs
can lead to negative correlations, which stems from post-spiking resets (\cref{fig:num}b).
For symmetric pairwise interactions with unit weight, negative correlations dominate
the dynamics irrespective of the input rates (\cref{fig:num}c).


\section{Pair-replica mean-field versions of LGL networks}
\label{sec:RI}

In this section, we discuss how to construct consistent second-order RMF models
for LGL neural networks using the solution of the pair PDE.
In \Cref{ssec:PhysRep}, we introduce the pair-RMF limits as physical systems
which naturally satisfy the Poisson Hypothesis by randomization of neural interactions.
In \Cref{ss:sce}, we state the self-consistency equations, which leverage the pair-PDEs
to characterize the stationary state of certain RMF limits, called pair-partition-RMF limits.
In \Cref{ss:sce}, we discuss the consistency issues that are present for another RMF limit
called the all-pair-RMF model. Both \Cref{ss:sce} and \Cref{ss:sce} include numerical
illustrations of our pair-RMF approach.


\subsection{Replica limits as physical models}\label{ssec:PhysRep}

\begin{figure}
  \centering
  \includegraphics[width=0.8\textwidth]{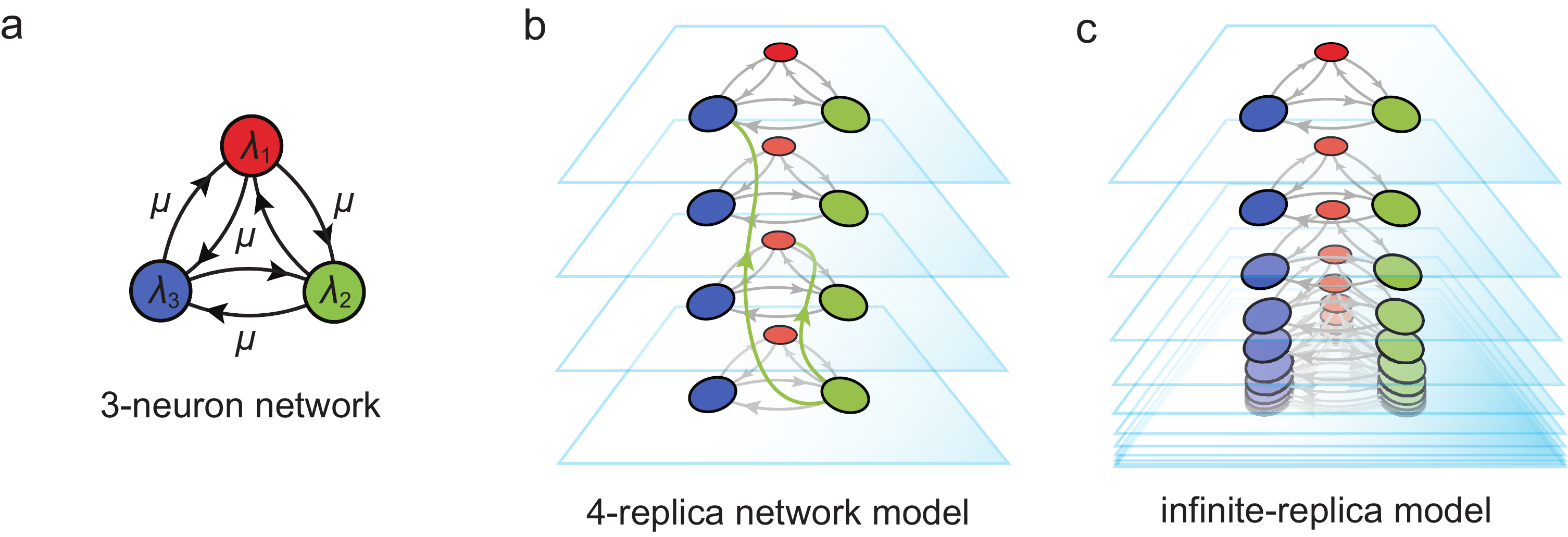}
  \caption{{\bf Physical RMF models.}
  {\bf a.} Original linear LGL networks of $K=3$ neurons.
  {\bf b.} First-order, finite-replica model with $M=4$ replicas. When a neuron spikes (green neuron), it interacts with downstream neurons sampled uniformly at random across replicas.
  {\bf c.} RMF models are obtained in the limit of an infinite number of replicas and represent infinite-size physical models supporting RMF dynamics.
  }
  \label{fig:repscheme}
\end{figure}

As already explained, the solution of the pair-PDE \eqref{eq:pairPDE} fully characterizes
the stationary state of a neuronal pair $(i,j)$ receiving independent Poissonian spike deliveries
from upstream neurons.  This corresponds to one of the simplest input-output networks for which
the state and outputs exhibit non-trivial correlations.
However, such a simple network is of limited interest in itself and
an important question is whether one can leverage analytic knowledge about
the pair system to study networks with a more complex structure.

In our introductory work \cite{Baccelli:2019aa}, we answer a similar question
under the most stringent Poisson Hypothesis which neglects all activity correlations
by introducing the so-called first-order RMF limits.
The merit of first-order RMF limits is to provide physical systems
that naturally implement the Poisson Hypothesis, and for which one can write self-consistency equations.
In turn, these self-consistency equations determine the stationary spiking rates throughout the whole network.
Just as for first-order RMF limits, we seek to state such self-consistency equations for pair-based RMF limits.
These pair-RMF limits introduced below can be seen as elaborations on the first-order RMF limits 
defined in \cite{Baccelli:2019aa}.

As depicted in \Cref{fig:repscheme}, first-order RMF limits comprise an infinite number
of replicas of the original systems. By original system, we mean the $K$-neuron network
with LGL dynamics described by~\eqref{eq:dynmod}.
First order $M$-replica models are obtained by considering that elementary constituents in
each replica are made of single neurons, whose autonomous dynamics is the same as in the original system.
However, the key difference with the original system is that upon spiking,
a neuron $i$ from replica $m$ interacts with neurons $(j,m'_j)$, $j \neq i$, via the
original weights $\mu_{ji}$ but in replicas $m'_j$,  $j \neq i$, chosen uniformly at random.
Intuitively, this randomization of interactions degrades statistical dependence
between neurons and across replicas. For an infinite number of replicas---in the RMF limit---,
the elementary constituents of replicas become asymptotically independent,
only interacting via self-consistently determined spiking rates $\beta_k$, $1 \leq k \leq K$.
Rigorously establishing this asymptotic independence for generic RMF limits is the object
of a forthcoming paper~\cite{Davydov:2020}.
Our goal here is to generalize the first-order RMF construction to include neuronal pairs,
thereby defining pair-RMF models.

Pair-RMF models are naturally obtained by allowing the elementary constituents of each
replica to be single neurons or pairs of neurons.
Upon spiking, neurons within a pair $(i,j)$ interact with one another according to the exact LGL dynamics,
but deliver spikes to neurons $(k,m_k)$, $k \neq i,j$, chosen uniformly at random across replicas.
Several pair-RMF models are possible due to the freedom to chose how one may define the various
replica constituents, i.e., whether a neuron appears as a single unit, as a pair member(s), or even as both.
In this work, we focus on two simple cases, the pair-partition and the all-pair cases,
and postpone the principled exploration of all pair-RMF limits to future work.  

Pair-partition-RMF models are those RMF limits for which elementary replica
constituents are either single neurons or pairs of neuron that form a partition of the set
of $K$ neurons of the original network. We denote the set pairs of the partition by $\mathcal{P}$ (with
elements $(i_1,j_1),\ldots,(i_p,j_p)$) 
and the set of singletons by $\mathcal{S}$ (with elements $k_1,\ldots,k_q$).
Then, denoting by $\lambda_{i,m}$ the stochastic intensity of neuron $i$ in replica $m$,
we have the following non-autonomous evolution for the $M$-replica dynamics of the network state
$\bm{\lambda}_{M} = \lbrace \lambda_{i,m} \rbrace_{1 \leq i \leq K, 1 \leq m \leq M}$:

\vspace{8pt}
\begin{itemize}
\item When a paired neuron $(i,m)$ spikes, the state variables $\bm{\lambda}_{M}$ change as follows: 
\begin{description}
\item[ Endogenous pair update:]{\ }\\ A spike is delivered to the matching neuron $j$ of the same replica
so that  $\lambda_{j,m} \leftarrow \lambda_{j,m}  + \mu_{ji}$, whereas the spiking
neuron $(i,m)$ resets to $r_i$.
\item[ Exogenous singleton updates:]{\ }\\ For all  $k\ne i,j$, a downstream replica $m_k$ is chosen uniformly
at random from $1,\ldots,m-1$, $m+1,\ldots,M$ so that $\lambda_{k,m_k}\leftarrow  \lambda_{k,m_k}  + \mu_{ki}$.
\item[ Exogenous pair updates:]{\ }\\ For all pairs $(i',j')\ne (i,j)$, a downstream replica $m_{i',j'}$
is chosen uniformly at random from the set $1,\ldots,m-1$, $m+1,\ldots,M$ so that
$\lambda_{i',m_{i',j'}} \leftarrow  \lambda_{i',m_{i',j'}}  + \mu_{i'i}$ and
$\lambda_{j',m_{i',j'}} \leftarrow  \lambda_{j',m_{i',j'}}  + \mu_{i'i}$.
\end{description}
\item When the paired neuron $(j,m)$ spikes, the symmetric update rule holds. 
\item When a singleton neuron $(k,m)$ spikes, the state variables $\bm{\lambda}_{M}$ 
change as follows:
\begin{description}
\item[ Endogenous singleton update:]{\ }\\ The spiking neuron $(k,m)$ resets to $r_k$.
\item[ Exogenous singleton updates:]{\ }\\ For all  singletons $l\ne k$, a downstream replica $m_l$ is chosen
independently at random  from $1,\ldots,m-1$, $m+1,\ldots,M$ so that we have
$\lambda_{k,m_l} \leftarrow  \lambda_{k,m_l}  + \mu_{ki}$.
\item[ Exogenous pair update:]{\ }\\ For all pairs $(i,j)$, a spike is delivered to a downstream
pair $(i,j)$ chosen independently at random
in replica $m_{i,j}$ from $1,\ldots,m-1$, $m+1,\ldots,M$ so that we have
$\lambda_{i,m_{i,j}} \leftarrow  \lambda_{i,m_{i,j}}  + \mu_{ik}$ and
$\lambda_{j,m_{i,j}} \leftarrow  \lambda_{j,m_{i,j}}  + \mu_{jk}$.
\end{description}
\end{itemize}
\vspace{8pt}

For exchangeable initial conditions, the various replicas are exchangeable. 
It should be intuitively clear (and we conjecture it on the basis of \cite{Davydov:2020}) that as $M$ tends to
infinity, the state variables of a replica have a distribution that tends to
a limit for weak convergence and that, in this limit, replicas become independent
with, in each replica, both the pair and all singletons subjected to independent Poissonian bombardment.
Thus, the one-pair-RMF model provides us with an infinite yet physical systems whose
dynamics satisfies the Poisson Hypothesis \cite{RybShlosI}.


\subsection{Consistency equations for pair-partition-RMF limits}\label{ss:sce}


Pair-par\-tition-RMF limits are physical models for which stationary input rates can be directly evaluated.
Such evaluation is made possible by the analytical treatment 
of the single neuron ODE in \cite{Baccelli:2019aa} and of the pair-PDE in \Cref{sec:APPDE}.
Here, we leverage these analytical treatments to specify the self-consistency rate
equations governing the stationary state of a network in the partition-RMF limit.

By construction, in a pair-partition-RMF model, each replica is partitioned in constituents
which are either single neurons and neuronal pairs. Each replica is partitioned in the same way.
Moreover, under the Poisson Hypothesis, each neuronal pair and each neuronal singleton is subjected to independent Poissonian bombardment.
As a result, the joint MGF of a replica state variable admits a product form
\begin{equation}
\label{eq:bigprod}
L (u_1,\ldots, u_K) =  \prod_{(i,j) \in \mathcal{P}} L_{ij}(u_i,u_j) \prod_{k \in \mathcal{S}} L_k(u_k) \,, 
\end{equation}
where $\mathcal{P}$ and $\mathcal S$ denotes the set of pairs and the set of singletons, respectively.
The elementary MGFs $L_{ij}$ and $L_k$ are determined as follows.

{\bf Single neuron:} 
If a neuron $k$ belongs to $\mathcal{S}$,
given external stationary rates $\beta_l$, $l \neq k$,
its stationary MGF $L_k$ satisfies the ODE
\begin{eqnarray}\label{eq:firstODE}
-\left( 1+ \frac{u}{\tau_k}\right)  L_k'(u) +  \left( \frac{ub_k}{\tau_k}  +\sum_{l \neq k}\left( e^{u \mu_{kl}}-1\right) \beta_l \right) L_k(u) +  \beta_k e^{u r_k} = 0 \, ,
\end{eqnarray}
and is known in closed form \cite{Baccelli:2019aa}.
Computing, $L_k$ from the above ODE assumes the knowledge of input rates $\beta_{[k]}= \{ \beta_l, l\ne k \}$.
Thus, $\beta_k$ is formally defined via a map that we denote
\begin{equation}
\label{eq:map1}
\beta_k= L'_k(0)\stackrel{\mathrm{def}}{=}F_{k}(\beta_{[k]}) \, .
\end{equation}

{\bf Pair of neurons:} If $(i,j)$ is a pair in $\mathcal{P}$, its stationary MGF $L_{ij}$
satisfies the pair-PDE \eqref{eq:pairPDE}.
This resolution involves computing the rates $\beta_i$ and $\beta_j$ via 
\eqref{eq:itscheme1}, \eqref{eq:itscheme2}, \eqref{eq:itscheme3}, \eqref{eq:itscheme4},
assuming knowledge of the external rates $\beta_{[ij]}=\{\beta_k, k\ne i,j \}$.
Thus,  $(\beta_i,\beta_j)$ are formally defined via a map that we denote
\begin{equation}
\label{eq:map2}
(\beta_i,\beta_j)= \left(\frac{\partial L_{ij}}{\partial u} (0,0) ,\frac{\partial L_{ij}}{\partial v} (0,0) \right)\stackrel{\mathrm{def}}{=} F_{ij}(\beta_{[ij]})\, .
\end{equation}

Determining the self-consistent MGF $L$ amounts to finding a solution
$\lbrace \beta_1,\ldots,\beta_k \rbrace$ to the system of self-consistency equations:
\begin{eqnarray}
\beta_k &= & F_{k}(\beta_{[k]}), \quad \forall \ k \in \mathcal{S},\\
(\beta_i,\beta_j) & = &  F_{ij}(\beta_{[ij]}),\quad \forall \  (i,j) \in \mathcal{P}.
\end{eqnarray}
The existence of a global solution is guaranteed by noticing that the corresponding physical
RMF dynamics satisfies the above consistency system of ODEs and PDEs under
the Poisson Hypothesis \cite{Davydov:2020}.
In practice, we find that a naive fixed-point iterative scheme always converges toward
the same solution for a given network. 
However, the question of the uniqueness of the pair-partition-RMF limit for
excitatory LGL networks remains open.


\begin{figure}
  \centering
  \includegraphics[width=\textwidth]{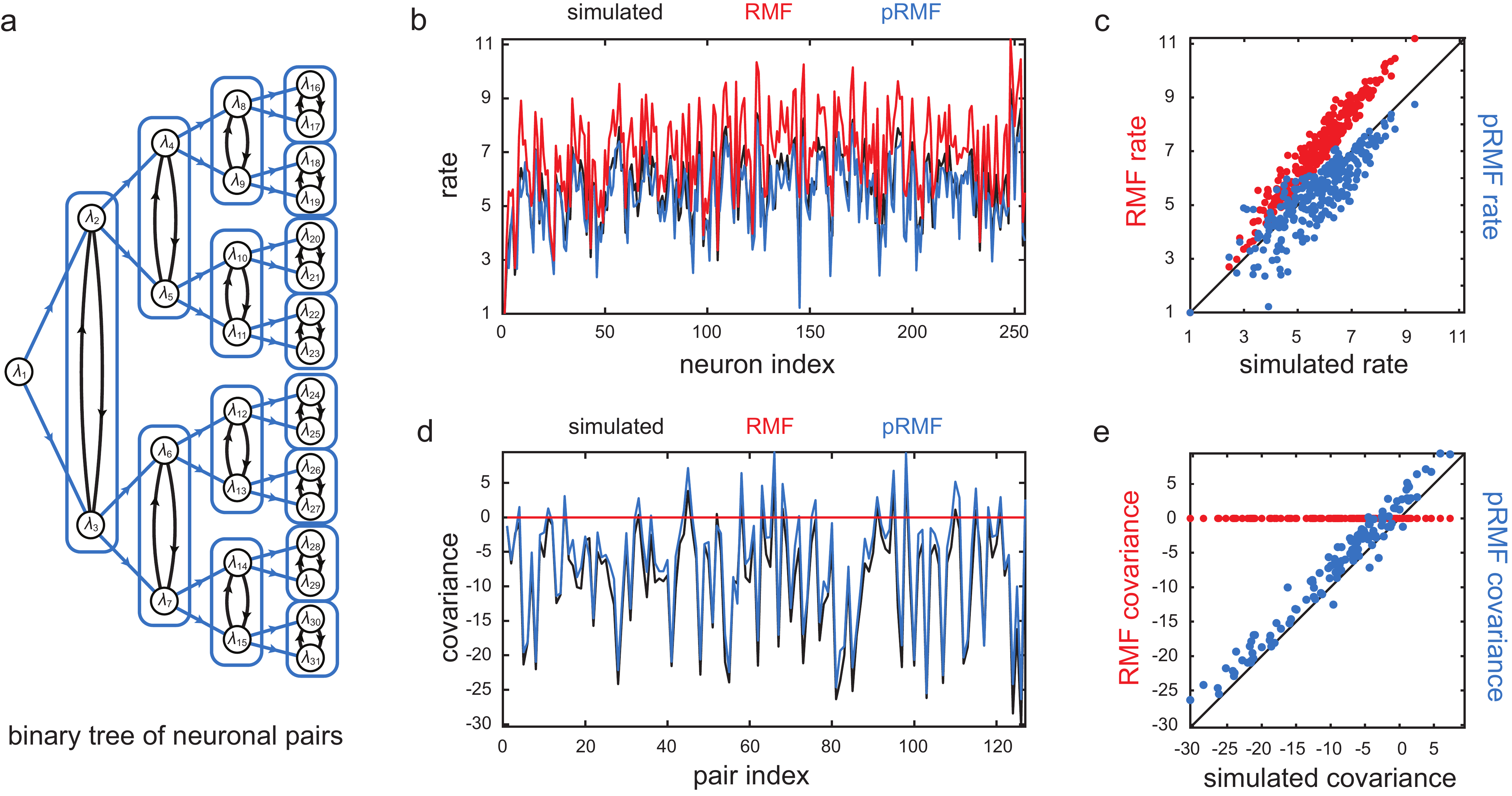}
  \caption{{\bf Pair-partition-RMF model.}
  {\bf a.}  Ensemble of pairs connected in a tree-organized feedforward circuit. Pair-partition-RMF models consider neuronal pairs (framed in blue) subjected to independent Poissonian spike deliveries (blue edges). Neuronal dynamics are devoid of relaxation with reset $r_i=1$ and connections weights $\mu_{ij}$ uniformly distributed in $(0,10)$.
  Simulations will be conducted for a tree of $7$ levels, i.e., $255$ neurons in $127$ pairs in addition to the root neuron.
  {\bf b.} Comparison between neuronal rates computed via exact event-driven simulations for the original model and rates computed via first-order RMF approach (RMF) and pair-partition-RMF approach (pRMF).
  {\bf c.} Scatter plot comparing the faithfulness of the first-order RMF approach and the 
pair-partition-RMF approach for stationary rates.
   {\bf d.} Comparison between pair-covariance estimates computed via exact event-driven simulations for the original model and pair-covariance estimates computed via first-order RMF approach (RMF) and pair-partition-RMF approach (pRMF).
  {\bf e.} Scatter plot comparing the faithfulness of the first-order RMF approach and the  
pai-partition-RMF approach for pair-covariance estimates.
  }
  \label{fig:partition}
\end{figure}

Figure \ref{fig:partition} illustrates numerically the pair-partition-RMF approach
for the case of a binary-tree feedforward structure, whereby children of a node interact as a pair.
\Cref{fig:partition}a depicts the overall structure of the original networks:
at each tree level, a neuron delivers spikes to a downstream pair of interacting neurons. 
All connection weights are randomly uniformly sampled, leading to an heterogeneous stationary regime.
The pair-partition-RMF model is obtained by considering all neuronal pairs (framed in blue)
as elementary constituents of the replicas, except for the root which is a singleton with no input.
To assess the faithfulness of the pair-RMF approach, we compare its spiking-rate and
pair-covariance estimates with first-order RMF estimates and with simulated estimates.
The latter estimates are computed via a discrete-event method using the Gillespie algorithm \cite{Gillespie:1977}.
\Cref{fig:partition}b and \Cref{fig:partition}c show that for a binary-tree feedforward structure,
the pair-partition-RMF approach marginally outperforms the first-order RMF ones.
\Cref{fig:partition}d and \Cref{fig:partition}e shows that the pair-partition-RMF
approach satisfactorily predicts the covariance among pairs, whereas by construction, 
the first-order RMF approach yields zero covariance between neurons.
These results provide an example of network structure for which the pair-partition-RMF
approach outperforms standard first-order RMF approach.
Remaining inaccuracies in predictions are due to approximating the activity
of upstream neurons as Poissonian: in the original network, neurons do not have
Poissonian activity as this only happens if their stochastic intensity is constant,
i.e., in the absence of interaction with other neurons.
We expect the pair-partition-RMF approach to perform well for network structures
involving strongly interacting neuronal pairs receiving Poissonian-like inputs.
Numerically, we find that this happens for sparse, feedforward connectivities,
such as the one presented in \Cref{fig:partition}.

\begin{figure}
  \centering
  \includegraphics[width=\textwidth]{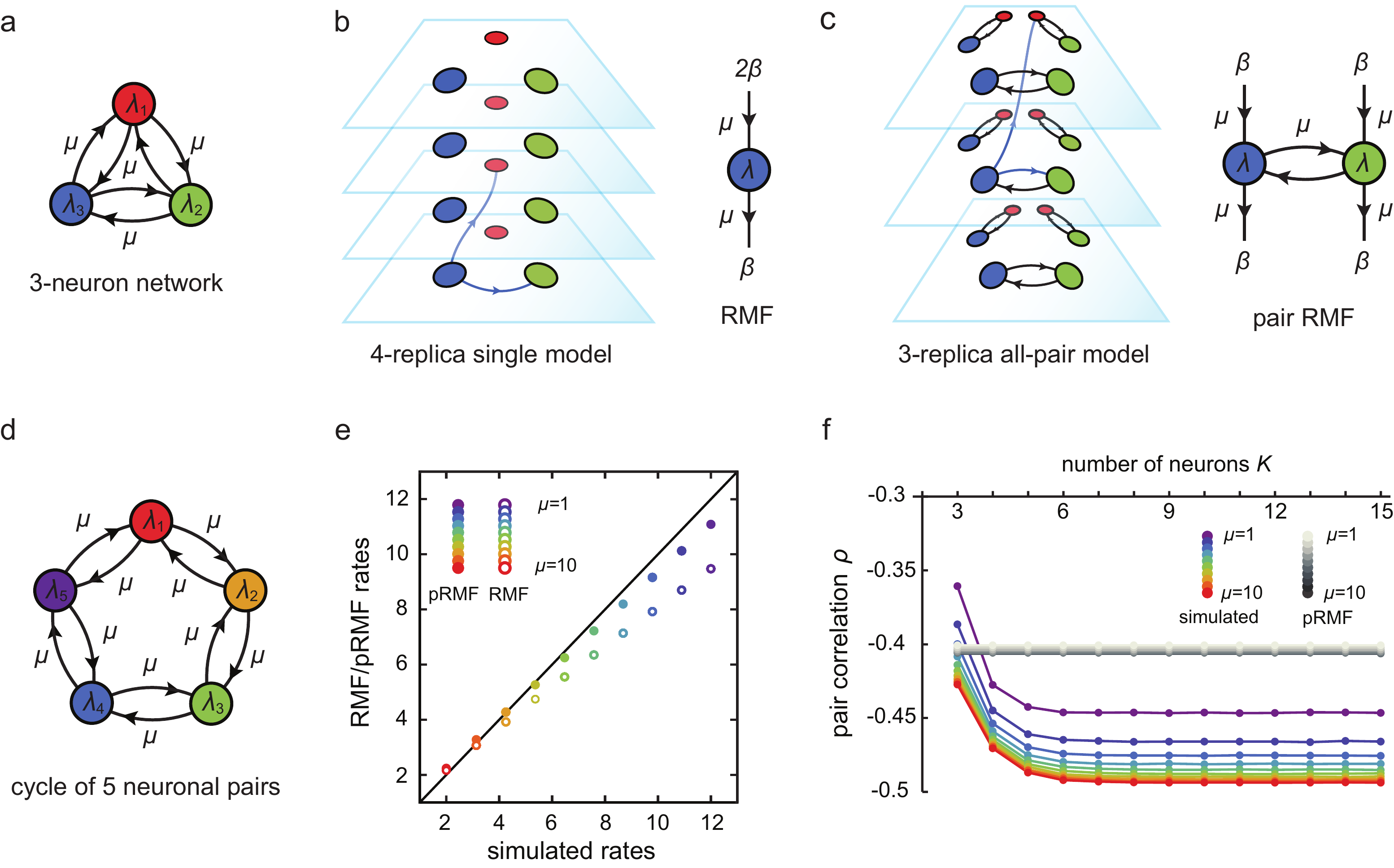}
  \caption{{\bf All-pair-RMF model.}
  {\bf a.} Original fully connected model of $K=3$ neurons.
  {\bf b.} The first-order RMF models consider that elementary replica constituents are isolated neurons.
 In homogeneous models, for which neurons are exchangeable, RMF limits can be resolved self-consistently (right schematic). 
  {\bf c.} The all-pair-RMF models consider that elementary replica constituents are made of all possible pairs.
  RMF limits can be resolved from appropriate self-consistency equations.
  {\bf d.} Homogeneous original network made of $K=5$ connected neuronal pairs without relaxation and reset $r=1$, and with variable connectivity $\mu$.
  {\bf e.} Scatter plot comparing the faithfulness of the first-order RMF approach and the  pair-partition-RMF approach for stationary rates, for $\mu=1, \ldots, 10$ and $K=3, \ldots 15$.
  {\bf f.} Comparison between pair-covariance estimates computed via exact event-driven simulations for the original model (color scale) and pair-covariance estimates computed via all-pair-RMF approach (gray scale). First-order RMF models yield zero correlations.
  }
  \label{fig:chain}
\end{figure}


\subsection{All-pair-RMF limits}

In all-pair-RMF models, each replica comprises all the possible pairs that can be formed by connected neurons.
This corresponds to considering the same neuron $i$ in different neighborhood contexts,
depending on which neuron $j$ engages in a pair with $i$.
We illustrate schematically this approach in \Cref{fig:chain}a-c by contrasting 
the first-order RMF and the all-pair-RMF approaches for an all-to-all network of $K=3$ exchangeable neurons.
The all-pair-RMF dynamics is best captured from its finite-replica version.
Specifically, let us denote by $\lambda_{i,m}^j$ the stochastic intensity of neuron $i$ in replica $m$ when paired with neuron $j$.
Then, we have the following non-autonomous evolution for the finite $M$-replica network state
$\bm{\lambda}_{M} = \lbrace \lambda^j_{i,m} \rbrace_{1 \leq i \neq j \leq K, 1 \leq m \leq M}$:

\vspace{8pt}
\begin{itemize}
\item  When a neuron $i$ from pair $(i,j)$ in replica $m$ spikes, the state variables $\bm{\lambda}_{M}$ change as follows: 
\begin{description}
\item[ Endogenous pair update:]{\ }\\ A spike is delivered to the matching neuron $j$ of the same replica
so that  $\lambda^i_{j,m} \leftarrow \lambda^i_{j,m}  + \mu_{ji}$, whereas the spiking
neuron resets: $\lambda^j_{i,m} \leftarrow r_i$.
\item[ Exogenous pair updates:]{\ }\\ For all neurons $j \ne i$, a target pair $(j,k_j)$ is chosen uniformly at random among the $(K-1)$ pair containing $j$ and a downstream replica $m_{j}$
is chosen uniformly at random from the set $1,\ldots,m-1$, $m+1,\ldots,M$, so that
$\lambda^{k_j}_{j,m_j} \leftarrow  \lambda^{k_j}_{j,m_j}  + \mu_{ji}$.
\end{description}
\item When the paired neuron $j$ from pair $(i,j)$ in replica $m$ spikes, the symmetric update rule holds. 
\end{itemize}
\vspace{8pt}

A feature of the all-pair-RMF approach is to yield multiple spiking rate estimates for a single neuron, which we denote by $\beta^j_i = \mathbbm{E}\big[\lambda^j_i\big]$ where $j$ denotes the paired neuron.
Moreover, within a given pair $(i,j)$, a neuron $i$ is bombarded across replica by neurons $k \ne i,j$ chosen uniformly at random across the $K-1$ possible pairs containing $k$, i.e., with aggregate rate:
\begin{eqnarray}\label{eq:aggregRate}
\beta_k = \frac{1}{K-1} \sum_{l \neq k} \beta^{(l)}_k \, .
\end{eqnarray}
Correspondingly, the self-consistency equations of the all-pair-RMF models read
\begin{equation}
\label{eq:allpaircons}
\left(\beta^{j}_{i},\beta^{i}_j \right)= F_{ij}(\beta_{[ij]}) \, , \quad \mathrm{for \; all} \quad  (i,j) \,,\ 1\le i<j\le K \, ,
\end{equation}
where $\beta_{[ij]} =\{\beta_k, k\ne i,j \}$ refers to the aggregate rates defined in \eqref{eq:aggregRate}.
Our replica interpretation guarantees the existence of a solution to the above system of $K(K-1)/2$ fixed-point equations.
By contrast with partition-pair-RMF solutions, this all-pair-RMF solution yields multiple estimates for the spiking rate of neuron $i$ via the rates $\beta^{j}_{i}$, $j \neq i$.
These rates specify the $K(K-1)/2$-dimensional MGF of a single all-pair-RMF replica $\Lambda$.
By construction, the MGF $L$ has the following product structure
\begin{eqnarray}
\Lambda \left(u^2_1, \ldots, u^K_1, \ldots, u^1_K, \ldots,u^{K-1}_K \right) = \prod_{i \neq j} L_{ij} \left( u^j_i,u^i_j \right) \, ,
\end{eqnarray}
where $L_{ij}$ solves the pair-PDE \eqref{eq:pairPDE} with external aggregate rates \eqref{eq:aggregRate}.
Then, one can easily define a consistent $K$-dimensional replica MGF by setting $u^j_i=u_i/(K-1)$ for all $j \neq i$ in the full MGF $\Lambda$ to obtain
\begin{eqnarray}
L(u_1, \ldots, u_K) = \prod_{i \neq j} L_{ij} \left( \frac{u_i}{K-1},\frac{u_j}{K-1} \right)  \, .
\end{eqnarray}
The function $L$ is simply the MGF of the stationary state of the average spiking rates $\bar{\lambda}_i = \sum_{j \neq i} \lambda^j_i/(K-1)$, whose means are precisely the aggregate rates $\beta_i$.
This shows that the rates $\beta_i$ corresponds to a consistent $K$-dimensional physical system with MGF $L$.
Although this model includes correlations, it remains a ``caricature'' of the original network.
Indeed, the rates $\beta^j_i$, and therefore the aggregate rates $\beta_i$, neglect the propagation of statistical dependencies beyond  single pairwise interaction.
This neglect effectively dampens the impact of correlations on rate estimates.


We illustrate this points by considering the network of
\Cref{fig:chain}d, which features a symmetric structure made of a circular
chain of $K$ neurons for which all neighboring neurons engage in pairs via homogeneous weights $\mu$.
Thus the only parameters specifying the structure are the numbers $K$ and $\mu$.
Observe first that RMF models do not depend explicitly on $K$ but on the connectivity 
number of the graph of interactions.
For instance, the symmetric all-pair-RMF models correspond to the following  boundary-value PDE problem:
\begin{eqnarray}\label{eq:allpairPDE}
\lefteqn{ \left( 1\!+ \! \frac{u}{\tau}\right)  \partial_u L+ \left( 1\!+ \! \frac{v}{\tau} \right) \partial_v L -\left( (u+v)\frac{b}{\tau}  + \left( e^{u \mu}+e^{v \mu}\!-\!2\right) \beta \right) \kappa L = } \nonumber\\
&&  \hspace{200pt} e^{u r+v \mu}  \partial_u L \vert_{u=0}+  e^{v r+u \mu}\partial_v L \vert_{v=0}  \, ,\\
&&  \hspace{110pt} \beta = \frac{\partial L}{\partial u} (0,0) = \frac{\partial L}{\partial v} (0,0) \, ,
\end{eqnarray}
where $\kappa$ is the connectivity number counting the number of external upstream neurons. 
For our chain-like model, we have $\kappa=1$.
Although the stationary spiking rates of the original model depend on $K$ in principle,
they quickly converge toward their infinite size limit $K \to \infty$ in practice.
Thus, the main determinant of the chain dynamics is the weight $\mu$.
In \Cref{fig:chain}e, we compare the stationary spiking rates for various $K$ and $\mu$
obtained for the first-order RMF and the all-pair-RMF models with rates obtained
via discrete-event simulation. Observe that the dependence on $K$ is barely noticeable
as rate estimates clustered around values solely determined by $\mu$.
Moreover, note that as expected, the all-pair-RMF model, which takes into account
all pairwise dependencies, outperforms the first-order RMF model.
Finally, in \Cref{fig:chain}f, we plot the pairwise correlation estimates for
the all-pair-RMF model (first-order-RMF approaches yield zero correlations).
We find that the simulated correlations depend more markedly on the length of the
chain $K$ than the simulated spiking rates. Moreover, we find that the correlation
values predicted by the all-pair-RMF model underestimate the simulated correlations.
This is consistent with the fact that dependencies do not propagate in all-pair-RMF models,
thereby leading to dynamics where the influences of correlations are dampened.


\appendix \label{sec:app}

\section{Primer on Palm calculus}\label{sec:Palm}
Palm calculus treats stationary point processes from the point of view of a typical point, i.e.,
a typical spike, rather than from the point of view of a typical time, i.e., in between spikes.
Here, we only introduce Palm calculus via the two formulae that will play a key role in deriving
the RMF {\it ansatz} \cite{BremaudBaccelli:2003}.
With no loss of generality, consider a stationary point process $N_i$ defined on some probability space
$(\Omega, \mathcal{F},\mathbb P)$, representing the spiking activity of a neuron.
If $\{\theta_t\}$ is a time shift on $(\Omega,{\mathcal F})$ which preserves $\mathbb P$, 
we say that the stationary point process $N$ is $\theta_t$-compatible in the sense that $N(B)\circ \theta_t=N(B+t)$ for all 
$B$ in $\mathcal{B}(\mathbb{R})$ and $t\in \mathbb R$.
With this notation, the Palm probability of $N$, which gives the point of view of a ``typical'' point on $N$,
is defined on $(\Omega, \mathcal{F})$ for all event $A$ in $\mathcal{F}$ and for all time $t>0$ by
\begin{eqnarray}\label{eq:Palm}
\hspace{15pt}
{\mathbb P}^0_N(A) = \frac{1}{\beta t} \Exp{\sum_{n \in \mathbb{Z}} \mathbbm{1}_A(\theta_{T_{n}})  \mathbbm{1}_{(0,t]}(T_{n})}
= \frac{1}{\beta t} \Exp{\int_{(0,t]} \left( 1_A \circ \theta_s \right) N(ds)} \, ,
\end{eqnarray}
where $\beta = \Exp{N((0,1])}$.
Informally, ${\mathbb P}^0_N(A)$ represents the conditional probability
that a train of spikes falls into $A$ knowing that a spike happens at $t=0$.
Moreover, suppose that $N$ admits a stochastic intensity $\lambda_i$,
representing the instantaneous spiking rate, and set $A= \{ \lambda(0) \in B\}$ for some $B$ in $\mathcal{B}(\mathbb{R}_+)$, then
\begin{eqnarray}
{\mathbb P}^0_N(A)  = {\mathbb P}^0_N \left[ \lambda (0_-) \in B  \right] = \mathbb P \left[ \lambda(0_-) \in B \, \vert \, N(\{0\})=1 \right] \, 
\end{eqnarray}
specifies the stationary law of the stochastic intensity $\lambda_i$ just before spiking.

The notions of Palm probability and stochastic intensity provide the basis for the theory of Palm calculus.
Let us consider another non-negative stochastic process $X$ defined on the same underlying probability 
space $(\Omega, \mathcal{F})$ as that of $N$.
If $X$ is also $\theta_t$-compatible in the sense that $X(s)\circ \theta_t=X(s+t)$ for all $t,s\in \mathbb R$,
then the first key formula Palm calculus directly follows from the definition \eqref{eq:Palm} and reads
\begin{eqnarray}\label{eq:Palm1}
\ExpPN{X(0_-)}
=
\frac{1}{\beta t} \Exp{\int_0^t X (s) N(ds)} \, ,
\end{eqnarray}
where $\ExpPN{ \cdot}$ denotes the expectation with respect to ${\mathbb P}^0_N$.
In the following, the process $X$ intervening in the above expression will typically
be a function of the stochastic intensity of a neuron.
The second key formula, which follows from the Papangelou theorem,
relates Palm probabilities to the underlying probability via the notion of stochastic intensity \cite{BremaudBaccelli:2003}.
Specifically, if $N$ admits a stochastic intensity $\lambda$ and $X$ has appropriate
predictability properties, then for all real valued functions $f$ we have:
\begin{eqnarray}\label{eq:Palm2}
\Exp{f(X(0)) \lambda_i(0)}
=
\beta \ExpPN{f\big(X(0_-)\big)}  \, .
\end{eqnarray}
The formulae \eqref{eq:Palm1} and \eqref{eq:Palm2} will be the only results required to
establish rate-conservation equations via Palm calculus.

We conclude by giving an application of Palm calculus which will be useful for the probabilistic interpretation of the integral equations \eqref{eq:linFredholm} and \eqref{eq:norm}.
Specifically, we consider the case of neuronal pair $(i,j)$ receiving independent Poissonian spike deliveries from upstream neurons.
Our goal is to evaluate the probability $\pi_i$ that neuron $i$ is the next one to spike under the stationary probability of the neuronal pair $(i,j)$.
Denoting by $\Expij{\cdot}$ the expectation with respect to the stationary process $N_i+N_j$, respectively, the probability $\pi_i$ is given by
\begin{eqnarray}
\hspace{20pt}
\pi_i
=
\Exp{\mathbbm{1}_{\lbrace T_{1,i}<T_{1,j}\rbrace}} 
=
\Expij{\mathbbm{1}_{\lbrace T_{1,i}<T_{1,j}\rbrace}} 
=
\frac{1}{\beta_i+\beta_j} \Exp{(\lambda_i+\lambda_j)\mathbbm{1}_{\lbrace T_{1,i}<T_{1,j}\rbrace}} \, ,
\end{eqnarray}
where the last formula follows from Papangelou theorem via \eqref{eq:Palm2}.
Using the key formula of Palm calculus \eqref{eq:Palm1}, the probability $\pi_i$ can be expressed as a stationary expectation $\Expi{\cdot}$ with respect to the process $N_i$:
\begin{eqnarray}
\pi_i
&=& \Expij{\mathbbm{1}_{\lbrace T_{1,i}<T_{1,j}\rbrace}} \\
&=& \frac{1}{\beta_i+\beta_j} \Exp{(\lambda_i+\lambda_j)\mathbbm{1}_{\lbrace T_{1,i}<T_{1,j}\rbrace}} \, ,\\
&=& \frac{\beta_i}{\beta_i+\beta_j} \Expi{\int_0^{T_i} \big(\lambda_i(t)+\lambda_j(t)\big) \mathbbm{1}_{\lbrace T_{1,i}<T_{1,j}\rbrace}\, dt } \, ,\\
&=& \frac{\beta_i}{\beta_i+\beta_j}  \Expi{\int_0^{T_{ij}} \big(\lambda_i(t)+\lambda_j(t)\big)\, dt } \, ,
\end{eqnarray}
where $T_{ij}$ is defined as $T_{ij}=\min(T_i,T_j)$, the survival time with hazard rate function $\lambda_i(t)+\lambda_j(t)$. 
Denoting by $\tilde{\lambda}_i$ the conditional hazard function of $T_{ij}$ under $\mathbb{P}_i^0$, one can finally evaluate
\begin{eqnarray}
\Expi{\int_0^{T_{ij}} \big(\lambda_i(t)+\lambda_j(t)\big)\, dt }
&=&
\int_0^\infty \left( \int_0^t \tilde{\lambda}_i(s) \, ds \right) \tilde{\lambda}_i(t) e^{- \int_0^t \tilde{\lambda}_i(s) \, ds} \, dt \\
&=&
\int_0^\infty \tilde{\lambda}_i(t) e^{- \int_0^t \tilde{\lambda}_i(s) \, ds}  \, dt = 1 \, ,
\end{eqnarray}
where the result follows from integration by parts.
This shows that $\pi_i= \beta_i/(\beta_i+\beta_j)$.

\section{Derivation of the pair PDE via the rate-conservation principle} 
\label{ssec:RCE}
Here, we establish the pair PDE \eqref{eq:pairPDE} bearing on the stationary MGF of an
interacting pair of neurons subjected to independent Poissonian spike deliveries.
This will require the use of the rate-conservation principle of Palm calculus for stationary
point processes~\cite{BremaudBaccelli:2003}.
Palm calculus treats stationary point processes from the point of view of a typical point, i.e.,
a typical spike, rather than from the point of view of a typical time, i.e., in between spikes.
We include a primer about Palm calculus in \Cref{sec:Palm} for the reader who is 
unfamiliar with this topic~\cite{Mathes:1964,Mecke:1967}.

The state variable of the neuronal pair are given by the stochastic intensities
$\bm{\lambda}_t=(\lambda_i(t),\lambda_j(t))$.
If the process  $\bm{\lambda}_t$ is $\mathcal{F}_t$-predictable for some filtration
$\{\mathcal{F}_t\}$ and if the dynamics of $\bm{\lambda}_t$ is stationary, then for
all real numbers $u$ and $v$, the process 
$\lbrace e^{u \lambda_i(t)+v \lambda_j(t)} \rbrace_{t \in \mathbb{R}}$
is also $\mathcal{F}_t$-predictable and stationary.
Moreover, this process satisfies the stochastic integral equation
\begin{eqnarray}
e^{u \lambda_i(t)+v \lambda_j(t)} - e^{u \lambda_i(0)+v \lambda_j(0)}
&=& \int_0^t \left( \frac{u}{\tau_i} \big(b_i \!-\! \lambda_i(s) \big)  + \frac{v}{\tau_j} \big(b_j \!-\! \lambda_j(s) \big)\right) e^{u \lambda_i(s)+v \lambda_j(s)} \, ds \nonumber \\
&& +\int_0^t \left(   e^{u r_i+v \mu_{ji}} - e^{u \lambda_i(s)}\right) e^{v \lambda_j(s)}  N_i(ds) \nonumber\\
&& +\int_0^t \left(   e^{v r_j+v \mu_{ij}} - e^{v \lambda_j(s)}\right) e^{u \lambda_i(s)}  N_j(ds) \nonumber \\
&&+ \sum_{k \neq i,j} \left( e^{u \mu_{ik}+v\mu_{jk}}-1\right) \int_0^t e^{u \lambda_i(s)+v \lambda_j(s)} N_k(ds) \, ,
\label{eq:relaxStoch}
\end{eqnarray}
where $N_i$ and $N_j$ are point processes with stochastic intensity $\lambda_i$ and $\lambda_j$, respectively,
and where $N_k$ are independent stationary Poisson processes with rate $\beta_k$.
In \eqref{eq:relaxStoch}, the first integral term is due to relaxation toward base rate $b_i$,
the next two terms are due to spiking of neurons $i$ and $j$ with regeneration at reset value
$r_i$ and $r_j$, respectively, and the last integral term is due to receiving spikes from
neurons $k \neq i,j$. 
Taking the expectation of \eqref{eq:relaxStoch} with respect to the stationary
measure of $\bm{\lambda}$ yields the following rate-conservation equation for
$\lbrace e^{u \lambda_i(t)+v \lambda_j(t)} \rbrace_{t \in \mathbb{R}}$:
\begin{eqnarray}\label{eq:rateCons1}
0&=& \frac{u}{\tau_i}\Exp{\int_0^t \big( b_i - \lambda_i(s)\big) e^{u \lambda_i(s)+v \lambda_j(s)} \, ds } +  \frac{v}{\tau_j}\Exp{\int_0^t \big( b_j - \lambda_j(s)\big) e^{u \lambda_j(s)} \, ds }   \nonumber\\
&& + \: \Exp{\int_0^t \left(   e^{u r_i+v \mu_{ji}} - e^{u \lambda_i(s)}\right) e^{v \lambda_j(s)}  N_i(ds)} \nonumber \\
&& + \: \Exp{\int_0^t \left(   e^{v r_j+v \mu_{ij}} - e^{v \lambda_j(s)}\right) e^{u \lambda_i(s)}  N_j(ds)} \nonumber \\
&& +  \sum_{k\neq i,j} \left( e^{u \mu_{ik}+v\mu_{jk}}-1\right) \Exp{\int_0^t e^{u \lambda_i(s)+v \lambda_j(s)} N_k(ds)  } \, ,
\end{eqnarray}
where we have used that by stationarity, 
$\Exp{e^{u \lambda_i(t)+v \lambda_j(t)}} = \Exp{e^{u \lambda_i(0)+v \lambda_j(0)}}$.
Again, by stationarity, the expectation of the relaxation integral terms can be expressed as 
\begin{eqnarray}
\Exp{\int_0^t \big( b_i - \lambda_i(s)\big) e^{u \lambda_i(s)+v \lambda_j(s)} \, ds } = t  \Exp{ (b_i-\lambda_i) e^{u \lambda_i+v \lambda_j}} \, .
\end{eqnarray}
In turn, introducing the Palm distribution ${\mathbb P}^0_i$
with respect to $N_i$ allows us to write the expectations of the remaining interaction and
reset integral terms as expectations with respect to the Palm distributions ${\mathbb P}^0_i$, $1 \leq i \leq K$.
Specifically, by applying formula \eqref{eq:Palm1}, we have
\begin{eqnarray}\label{eq:rateconv2a}
\lefteqn{ \Exp{ \int_0^t \left(   e^{u r_i+v \mu_{ji}} - e^{u \lambda_i(s)}\right) e^{v \lambda_j(s)}  N_i(ds)} =  } \nonumber\\
&& \hspace{100pt}\left( \beta_i t \right) \Expi{  \left( e^{u r_i+v \mu_{ji}} - e^{u \lambda_i(0^-)}\right) e^{v \lambda_j(0^-)} } \, ,
\end{eqnarray}
\begin{eqnarray}\label{eq:rateconv2b}
\Exp{\int_0^t e^{u \lambda_i(s)+v \lambda_j(s)} N_k(ds)  } &=& \left(\beta_k t \right) \Expk{ e^{u \lambda_i(0^-)+ v \lambda_j(0^-)}} \, , 
\end{eqnarray}
where $\beta_i = \Exp{\lambda_i}= \Exp{N_i((0,1])}$ is the mean intensity of $N_i$, and
$\Expi{\cdot}$ denotes  expectations with respect to ${\mathbb P}^0_i$.
With these observations, the rate-conservation equation \eqref{eq:rateCons1}
can be expressed under a local form, i.e., without integral terms, but at the cost of
taking expectation with respect to distinct probabilities:
\begin{eqnarray}\label{eq:rateConsPalm}
0 &=& \frac{u}{\tau_i}  \Exp{ (b_i-\lambda_i) e^{u \lambda_i+v \lambda_j}} +\frac{v}{\tau_j}  \Exp{ (b_j-\lambda_j) e^{u \lambda_i+v \lambda_j}} \nonumber\\
&& + \sum_{k\neq i,j}  \left( e^{u \mu_{ik}+v\mu_{jk}}-1\right) \beta_k \Expk{e^{u \lambda_i(0^-)+v \lambda_j(0^-)}}\nonumber\\
&& + \:\beta_i \Expi{  \left( e^{u r_i+v \mu_{ji}} - e^{u \lambda_i(0^-)}\right) e^{v \lambda_j(0^-)}} \nonumber\\
&& +  \: \beta_j \Expj{  \left( e^{v r_j+u \mu_{ij}} - e^{v \lambda_j(0^-)}\right) e^{u \lambda_i(0^-)}}  \, .
\end{eqnarray}
The above equation can then be expressed under a local form involving only
the stationary distribution of $\bm{\lambda}$ thanks to the hypotheses bearing on
external spiking process $N_k$, $k \neq i,j$, and to Papangelou's theorem \eqref{eq:Palm2}, 
Under our assumptions of stationary independent Poissonian deliveries,
Palm expectations with respect to the Poisson process $N_k$, $k \neq i,j$, are equivalent
to expectations with respect to the stationary distribution of $\bm{\lambda}$.
Intuitively, this follows from the fact that such external spiking deliveries sample
the dynamics of the neuronal pair at completely random, memoryless times.
Accordingly, we have 
\begin{eqnarray}
\hspace{30pt} \beta_k \Expk{  e^{u \lambda_i(0_-)+v \lambda_j(0^-)}} = \beta_k \Exp{ e^{u \lambda_i+v \lambda_j}}  \, .
\end{eqnarray}
In turn, Papangelou's theorem \eqref{eq:Palm2} allow us to write
\begin{eqnarray}
 \beta_i \Expi{  e^{u \lambda_i(0_-)+v \lambda_j(0^-)}} &=& \Exp{ \lambda_i e^{u \lambda_i+v \lambda_j}}  \, , \\
 \beta_j \Expj{  e^{u \lambda_i(0_-)+v \lambda_j(0^-)}} &=& \Exp{ \lambda_j e^{u \lambda_i+v \lambda_j}}  \, ,
\end{eqnarray}
for all $k$, $1 \leq k \leq K$. 
Using the above relations in \eqref{eq:rateConsPalm}, the final form of the rate-conservation equation of
$\lbrace e^{u \lambda_i(t)+v \lambda_j(t)} \rbrace_{t \in \mathbb{R}}$
becomes an equation about the stochastic
intensities $(\lambda_i,\lambda_j)$ involving stationary expectations only:
\begin{eqnarray}
0&=&\frac{u}{\tau_i}  \Exp{ (b_i-\lambda_i) e^{u \lambda_i+v \lambda_j}} +\frac{v}{\tau_j}  \Exp{ (b_j-\lambda_j) e^{u \lambda_i+v \lambda_j}}\nonumber\\
&& + \sum_{k\neq i,j}  \left( e^{u \mu_{ik}+v\mu_{jk}}-1\right) \beta_k  \Exp{ e^{u \lambda_i+v \lambda_j}} \nonumber\\
&&+ e^{u r_i+v \mu_{ji}}  \Exp{ \lambda_i e^{v \lambda_j}} - \Exp{ \lambda_i e^{u \lambda_i+v \lambda_j}} \nonumber\\
&&\nonumber\\
&&+e^{v r_j+u \mu_{ij}}  \Exp{ \lambda_j e^{v \lambda_j}} - \Exp{ \lambda_j e^{u \lambda_i+v \lambda_j}}  \, .
\end{eqnarray}
Interpreting the expectation terms in term of the MGF $L$ and its partial derivatives leads
to the pair-PDE \eqref{eq:pairPDE} appearing in \cref{def:pairPDE}.
As announced, this PDE characterizes the joint stationary distribution of the neuronal
pair $(i,j)$ as if bombarded by external neurons $k\neq i,j$ via independent stationary
Poissonian deliveries with rates $\beta_k$.


\section{Explicit forms without relaxation}\label{sec:norelax}
Here, we give the closed form expressions for the kernel functions involved in the system of integral equations \eqref{eq:linFredholm} and the normalization condition \eqref{eq:norm} in the absence of relaxation.
In the following, these expressions will be used for illustration of our numerical analysis.
However, the validity of our numerical analysis does not assume this simplifying limit and the expressions for kernels in the absence of relaxation are only specified for the sake of completeness.

Without relaxation, the stochastic intensities $\bm{\lambda}=(\lambda_i,\lambda_j)$ become pure jump processes
where the intensity components $\lambda_i$ and $\lambda_j$ are given by
\begin{eqnarray}
\lambda_i(t) = r_i + \sum_{k \neq i} C_{ik}(t) \quad \mathrm{and} \quad \lambda_j(t) = r_j + \sum_{k \neq j} C_{jk}(t),
\end{eqnarray}
where $C_{ij}$, $i \neq j$, are counting processes registering the number spike deliveries
from neuron $j$ to neuron $i$, since last time neuron $i$ spiked.
In other words, neglecting relaxation corresponds to considering neurons with perfect memory,
except for the post-spiking resets that erase prior spiking delivery counts.

In practice, the kernel functions without relaxation are obtained by taking the limit $\tau_i \to \infty$,
$\tau_j \to \infty$ in the relevant intervening quantities.
In this limit, the auxiliary function \eqref{eq:fij} mediating the external drive takes the simple form
\begin{eqnarray}
f_{ij}(x,y)= \sum_{k \neq i,j} \big( e^{\mu_{ik}x + \mu_{jk} y} -1 \big) \beta_k \, .
\end{eqnarray}
Upon integration of the above function, the kernel functions defined in \eqref{eq:defK} 
and appearing in the normalization condition \eqref{eq:norm} can be evaluated as
\begin{eqnarray}
K_{ij}(0,u) = M_{ji}(0,u)  = \exp \left(  (\mu_{ij}+r_j)u +\sum_{k \neq i,j} \beta_k \Big( u+c_{ijk}(0,u) \Big)\right) \, .
\end{eqnarray}
In turn, the kernel functions defined in \eqref{eq:myqr} and appearing in the system of integral equations
\eqref{eq:linFredholm}  can be evaluated as
\begin{eqnarray}\label{eq:kerQ}
\lefteqn{ Q_{ij}(z,u) = - \left( r_j+\sum_{k \neq i,j} \beta_k \mu_{jk}c_{ijk}(z,u)  \right) } \nonumber\\
&& \hspace{100pt} \times   \exp \left(  r_j(u-z) + \mu_{ij} u +\sum_{k \neq i,j} \beta_k \Big( u-z+c_{ijk}(z,u) \Big)\right) 
\end{eqnarray}
\begin{eqnarray}\label{eq:kerR}
R_{ij}(z,u) &=&    \left( r_i+\sum_{k \neq i,j} \beta_k \Big( 1-e^{z \mu_{ik}} + \mu_{ik}d_{ijk}(z,u) \Big) \right)  \nonumber\\
&&  \hspace{60pt} \times   \exp{\left(  r_i (u+z) + \mu_{ji} u +\sum_{k \neq i,j} \beta_k \Big( u+ d_{ijk}(z,u) \Big) \right)} 
\end{eqnarray}
with auxiliary functions
\begin{eqnarray}
c_{ijk}(z,u)&=& \frac{  e^{z \mu_{ik}}}{\mu_{ik}+\mu_{jk}} \left(1- e^{(u-z)(\mu_{ik}+\mu_{jk})}\right) \, , \\
\quad d_{ijk}(z,u)&=& \frac{  e^{z \mu_{ik}}}{\mu_{ik}+\mu_{jk}} \left(1- e^{u(\mu_{ik}+\mu_{jk})}\right) \, .
\end{eqnarray}
Taking into account relaxation yields integral kernels involving a special function, namely, the exponential integral function.
However, for the sake of simplicity, we will only consider the case without relaxation to illustrate our method numerically.




\section{Probabilistic interpretation}\label{sec:probint}
In this section, we show that the system of integral equations \eqref{eq:linFredholm},
as well as the normalization condition \eqref{eq:norm}, receive a probabilistic 
interpretation in terms of the stationarity of the embedded Markov chain associated
to the continuous time Markovian dynamics.
The existence and uniqueness of a solution to that system of equation is then a 
direct consequence of the ergodicity of the dynamics of the $(i,j)$-pair subjected
to independent Poissonian bombardment from other neurons.
In  \Cref{sec:emc}, we define two tightly-related embedded Markov chains for a 
pair of interacting neurons as the sequence of instantaneous pre-spiking and post-spiking
stochastic intensities of the neuronal pair.
In  \Cref{sec:condmgf}, we give an integral representation for the conditional
moment-generating functions of the embedded Markov chain of a neuronal pair.
In \Cref{sec:intec}, we utilize the obtained integral representation to show
that the invariant measure of the embedded Markov chain satisfies conservation equations
equivalent to the system \eqref{eq:linFredholm} .
In  \Cref{sec:normcond}, we finally show that the normalization condition
\eqref{eq:norm} follows from stationarity of the counting process registering the spiking of the pair.


\subsection{Embedded Markov chain}\label{sec:emc}

The joint stochastic intensities $\bm{\lambda} = \big(\lambda_i, \, \lambda_j\big)$
form the state of the continuous-time Markovian dynamics of the $(i,j)$-pair of neurons.
In the RMF limit, upstream neurons $k$, $k\neq i, j$, deliver spikes to the $(i,j)$-pair according to independent Poisson processes with mean intensities $\beta_k$, $k \neq i,j$.
As a consequence, the coupling between neuron $i$ and $j$ is entirely due to interactions within the pair and dependencies do not propagates via relay neurons.
In turn, the stationary law of these pairwise interactions can be characterized via an embedded Markov chain obtained by specializing the continuous-time Markovian dynamics at pairwise interaction events, i.e., whenever neuron $i$ or neuron $j$ spikes.

Due to the instantaneous nature of the interactions, there are two possible choices for the embedded Markov chains, depending on whether one considers stochastic intensities just before spiking times or just after spiking times.
The pre-spiking chain $\lbrace \bm{\lambda}^-_n\rbrace_{n \in \mathbb{Z}}$ and the post-spiking 
chain $\lbrace \bm{\lambda}^+_n\rbrace_{n \in \mathbb{Z}}$ are defined by $\bm{\lambda}^-_n=\bm{\lambda}(T_{ij,n}^-)$
and $\bm{\lambda}^+_n=\bm{\lambda}(T_{ij,n}^+)$ respectively, where $\lbrace T_{ij,n}\rbrace_{n \in \mathbb{Z}}$
denotes the ordered sequence of spiking events of the $(i,j)$-pair.
The stationary probabilities of both chains are related via the post-spiking reset rules.
This relation is naturally expressed in terms of the Palm distributions of the stochastic intensities $\bm{\lambda}$.
The Palm probability of a stationary point process can be interpreted as the distribution of
this point process conditioned to have a point present at the origin of the time axis (see Section \Cref{sec:Palm}).
We denote by $\mathbb{P}^0_{i}$ and $\mathbb{P}_j^0$ the Palm probabilities associated to the spike
counting processes $N_i$ and $N_j$, respectively, and by $\mathbb{P}^0_{ij}$ the Palm probability associated to $N_i+N_j$,
when either neuron of the $(i,j)$-pair spikes. 
Here these Palm probabilities are with respect to the steady state
of the $(i,j)$-RMF model.
Under $\mathbb{P}^0_i$, $\lambda_j^-=\lambda_j(0^-)$ has a density on $[b_j,\infty)$ that we denote by $p^0_i$.
Similarly, we denote by $p^0_j$ the density of $\lambda_i^-=\lambda_i(0^-)$ under $\mathbb{P}^0_j$.
Because post-spiking resets erase all information about the stochastic intensities of the spiking neurons,
the distributions $p^0_i$ and $p^0_j$ carry all the relevant information about the coupled dynamics of the neuronal pair.
To see this, let us introduce $\pi_i$ and $\pi_j$, the stationary probabilities that neuron $i$ or neuron $j$
spike given that the pair spikes.
These probabilities are simply defined as ratios of stationary rates (see Section \Cref{sec:Palm}):
\begin{eqnarray}\label{eq:defPi}
\pi_i = \frac{\beta_i}{\beta_i+ \beta_j} \quad \mathrm{and} \quad \pi_j = \frac{\beta_j}{\beta_i+ \beta_j} \, ,
\end{eqnarray}
The joint stationary probability of $\bm{\lambda}^+=\bm{\lambda}(0^+)$ under $\mathbb{P}^0_{ij}$ is then given by:
\begin{eqnarray}\label{eq:consDistr}
p^0_{ij}(d \lambda_i,d\lambda_j)
= \pi_i \delta_{r_i}(d\lambda_i) \, p^0_i(\lambda_j-\mu_{ji})\, d\lambda_j +
\pi_j p^0_j(\lambda_i-\mu_{ij}) \, d\lambda_i \, \delta_{r_j}(d\lambda_j) \, .
\label{eq:palmdepalm}
\end{eqnarray}
In the above relation, the Dirac delta terms follow from post-spiking reset to values $r_i$ and $r_j$,
whereas the weight-shifted densities follow from post-spiking interactions within the pair.

Our goal is to derive integral equations satisfied by $p^0_{ij}$, and thus by  $p^0_i$ and $p^0_j$,
from conservation laws about the embedded chains.
These conservation laws follow from the invariance of the Palm distribution $\mathbb{P}^0_{ij}$
with respect to time shifts from one spiking event of the pair to the next spiking event.
Specifying the equations attached to these conservation laws requires to express the conditional
probability of the next spiking time of neuron $i$ or $j$ given the state $\bm{\lambda}$ of the $(i,j)$-pair
and given the history of external spike deliveries to the $(i,j)$-pair.
Let $T_{i,1}$ and $T_{j,1}$ denote the first positive points of $N_i$ and $N_j$, i.e.,
the first spiking time of neuron $i$ and neuron $j$, respectively.
For fixed $t>0$, let also $\bm{N} = \lbrace N_k(s) \rbrace_{0 \leq s \leq t, k \neq i,j}$
denote the history of external spike deliveries to the $(i,j)$-pair up to time $t$.
By definition of the stochastic intensities, we have
\begin{eqnarray}\label{eq:condTrans}
\lefteqn{ \mathbb{P} \left[t-dt<T_{j,1}<t, T_{i,1}>t \, \vert \, \bm{\lambda}(0) , \, \bm{N}\right] = } \nonumber\\
&& \hspace{40pt} \lambda_j(t \, \vert \, \bm{\lambda}(0),\bm{N}) dt
\exp{\left(-\int_0^t \Big( \lambda_i(s\, \vert \, \bm{\lambda}(0) , \, \bm{N}) +\lambda_j(s \, \vert  \, \bm{\lambda}(0) , \, \bm{N}) \Big) \,ds \right)},
\end{eqnarray}
where $\mathbb P$ is the law of the Markov process $(\bm{\lambda}(t),\bm{N}[0,t])$ and where
the neuron-specific stochastic intensities satisfy
\begin{eqnarray}\label{eq:stochIntForm}
\lambda_i(t \, \vert \,  \bm{\lambda}(0),  \bm{N}) = \tilde{\lambda_i}(t) + \sum_{k \neq i,j} \lambda_{ik}(t \, \vert \, N_k) \, ,
\end{eqnarray}
with state-dependent part $\tilde{\lambda_i}$ and externally-driven part $\lambda_{ik}$:
\begin{eqnarray}
\tilde{\lambda_i}(t) =b_i+(\lambda_i(0)-b_i)e^{-t/\tau_i}  
\quad \mathrm{and} \quad \lambda_{ik}(t \, \vert \, N_k) = \mu_{ik} \sum_{l: 0< T_{k,l} \leq t} e^{\frac{T_{k,l}-t}{\tau_i}} .
\end{eqnarray}
Expressions \eqref{eq:condTrans} are valid whenever the initial state $\bm{\lambda}(0)$
and the external process $\bm{N}$ are independent and, in addition,
the components of $\bm{N}$ are mutually independent.
These assumptions precisely define our second-order RMF model.


\subsection{Conditional moment-generating function}\label{sec:condmgf}

A key step toward obtaining integral equations from the invariance of the Palm distribution $\mathbb{P}^0_{ij}$ is to marginalize the conservation of some stationary variables over the external stochastic drive. 
The functional form \eqref{eq:condTrans} and the key role played by MGFs in our PDE analysis suggests to consider $e^{u\lambda_i (T_{j,1}^-)}$ as that conserved quantity.
Observe that we focus on the stochastic intensity of neuron $i$, when neuron $j$ spikes, as information about the spiking neuron is erased by resets.
As the components of $\bm{N}$, the arrivals of external spikes, constitute independent Poisson point processes
with intensities $\beta_k$, we have that for all $u>0$:
\begin{eqnarray}\label{eq:defExpCons}
\hspace{20pt} \Exp{e^{u \lambda_i(T_{j,1}^-)}, \, T_{j,1}<T_{i,1}  \, \vert \, \bm{\lambda}(0)   } = 
\int_0^\infty \sum_{\bm{n}} \left(\prod_{k \neq i,j} e^{-\beta_k t} \frac{\beta_k^{n_k}}{n_k !}  \right) 
Q_{\bm{n}}(u,t)\, dt \, .
\end{eqnarray}
In the above expression, the auxiliary terms $Q_{\bm{n}}(u,t)$ are expectations over the times of spiking deliveries at fixed spiking-delivery counts
\begin{eqnarray}
\lefteqn{ Q_{\bm{n}}(u,t) = } \nonumber\\
&& \Exp{\int_0^t \!\! \! \ldots \!\! \int_0^t \prod_{k \neq i,j} \prod_{l_k=1}^{n_k}  
e^{ u\lambda_i(t \, \vert  \, \bm{N}_{t})} \rho_j(t \, \vert  \, \bm{N}_{t}) \, dt_{k,l_k} \,
\Bigg \vert \, \bm{\lambda}(0) , \bm{N}([0,t])=\bm{n}} \, ,
\end{eqnarray}
with conditional survival density functions given by
\begin{eqnarray}
\rho_j(t \, \vert  \, \bm{N}_{t}) =
 \lambda_j(t \, \vert  \, 
\bm{N}_{t}) e^{-\int_0^t (\lambda_i +\lambda_j)(s \, \vert  \,   \bm{N}_{t})  \,ds } \, .
\end{eqnarray}
In the above definitions, $\bm{N}_{t}$ is the event that $\bm{N}$ consists of the (unordered) points $t_{k,l_k}$, $1\le l_k\le n_k$ on component $k$.
Specifying the functional form of the stochastic intensities $\lambda_i(t \, \vert \,  \bm{\lambda}(0),  \bm{N})$ given in \eqref{eq:stochIntForm} and introducing the functions
\begin{eqnarray}\label{eq:auxFunc}
F_{ijk}(u,t) & = &  \nonumber\\
&& \int_0^t \exp{\left( \tau_i \mu_{ik} \left(1-\left(1-\frac{u}{\tau_i}\right)e^{-\frac{s}{\tau_i}}\right)+\tau_j \mu_{jk} \left(1-e^{-\frac{s}{\tau_j}}\right)\right)} \, ds\\
G_{ijk}(u,t) & = & \nonumber\\
&& \int_0^t e^{-s/\tau_j} \exp{\left( \tau_i \mu_{ik} \left(1-\left(1-\frac{u}{\tau_i}\right)e^{-\frac{s}{\tau_i}}\right)+\tau_j \mu_{jk} \left(1-e^{-\frac{s}{\tau_j}}\right)\right)}  \, ds,
\end{eqnarray}
we can factorize the terms $Q_{\bm{n}}(u,t)$ as 
\begin{eqnarray}
\lefteqn{ Q_{\bm{n}}(u,t) =  e^{u\tilde{\lambda}_i(t)-\int_0^t \left(\tilde{\lambda}_i(s)+\tilde{\lambda}_j(s)\right) \, ds} } \nonumber\\
&& \hspace{50pt}\prod_{k \neq i,j} F_{ijk}(u,t)^{n_k}  \left( \tilde{\lambda}_j(t) + \sum_{m \neq i,j} n_m \mu_{jm} \frac{G_{ijm}(u,t)}{F_{ijm}(u,t)} \right).
\end{eqnarray}
Utilizing the above expression in \eqref{eq:defExpCons} allows one to perform the summation over spike-delivery counts $\bm{n}$ to obtain:
\begin{eqnarray}\label{eq:condMGF}
\lefteqn{\Exp{e^{u \lambda_i(T_{j,1}^-)} , \, T_{j,1}<T_{i,1} \, \vert \, \bm{\lambda}(0)  } = } \nonumber\\
&&  \int_0^\infty e^{u\tilde{\lambda}_i(t)-\int_0^t \left(\tilde{\lambda}_i(s)+\tilde{\lambda}_j(s)\right) \, ds} e^{ \sum_{k \neq i,j} \left( F_{ijk}(u,t) -t\right)\beta_k}  \! \left( \tilde{\lambda}_j(t) + \!\!\! \sum_{m \neq i,j} \! \mu_{jm} G_{ijm}(u,t) \right) \, dt. \nonumber\\
\end{eqnarray}
The above expression can be viewed as a conditional MGF for the embedded Markov chains and capture all the necessary information to specify conservation laws about these embedded Markov chains.


\subsection{Equation for the invariant measure of the embedded chain}\label{sec:intec}

By invariance of the Palm distribution $\mathbb{P}_{ij}$ with respect to time-shifts from a spiking event of the $(i,j)$-pair to the next spiking event, we have the conservation laws
\begin{eqnarray}\label{eq:consRel}
\begin{array}{ccc}
\Expij{e^{u \lambda_i(T_{j,1}^-)} , \, T_{j,1}<T_{i,1} }  &=& \pi_j \Expj{e^{u \lambda_i}} \, \vspace{7pt} \\
 \Expij{e^{u \lambda_j(T_{i,1}^-)} , \, T_{i,1}<T_{j,1} } &=& \pi_i \Expj{e^{u \lambda_j}} \, .
\end{array}
\end{eqnarray}
In the above equalities, the right-hand term is a conditional expectation given that neuron $j$ spikes at zero,
whereas the left-hand term is a conditional expectation given that that neuron $j$ is spiking at
next spiking event of the $(i,j)$-pair.
Note that the right-hand term is closely related via equation \eqref{eq:propbInt} to the functions $h_i$ and $h_j$ featuring in the system of integral equations \eqref{eq:linFredholm} .
Moreover, the left-hand term can be related to the stationary probabilities $p_i^0$ and $p_j^0$ thanks via the conditional MGF for the embedded Markov chains \eqref{eq:condMGF}.
In fact, we have the following 

\begin{lemma}\label{lem:intSys2}
The system of equations \eqref{eq:linFredholm}  obtained in Lemma \eqref{lem:intSys2} is equivalent to the conservation laws \eqref{eq:consRel}.
\end{lemma}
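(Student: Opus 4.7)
The plan is to translate the probabilistic conservation laws \eqref{eq:consRel} into analytical identities on $h_i$ and $h_j$, and then to match the resulting relations term-by-term with the Fredholm system \eqref{eq:linFredholm}. By Papangelou's theorem \eqref{eq:propbInt}, together with the change of variable \eqref{eq:varChange}, the right-hand side of \eqref{eq:consRel}, namely $\pi_j\Expj{e^{u \lambda_i}}$, becomes $h_i(x)/(\beta_i+\beta_j)$ after the identification $\partial_v L(u,0) = \beta_j\Expj{e^{u\lambda_i}}$ and the analogous formula obtained by swapping the roles of $i$ and $j$. Thus the target of the conservation law is already in the form appearing on the left-hand side of \eqref{eq:linFredholm}, up to the common prefactor $1/(\beta_i+\beta_j)$.

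Next, I would expand the left-hand side of \eqref{eq:consRel} by splitting $\mathbb{P}_{ij}^0$ according to whether the spike at time zero came from neuron $i$ or neuron $j$, using the decomposition \eqref{eq:palmdepalm}. This produces two terms: one in which $\bm\lambda(0^+)=(r_i,\mu_{ji}+\lambda_j^-)$ with $\lambda_j^-\sim p_i^0$, and one in which $\bm\lambda(0^+)=(\mu_{ij}+\lambda_i^-, r_j)$ with $\lambda_i^-\sim p_j^0$. For each term, I plug $\bm\lambda(0)=\bm\lambda(0^+)$ into the conditional MGF identity \eqref{eq:condMGF}. The result is a double integral: first over the state variables $\lambda_i^-$ or $\lambda_j^-$ (governed by $p_i^0$ and $p_j^0$, equivalently, by $h_j$ and $h_i$), and then over the waiting time $t$ until the next pair spike.

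The key analytic step is then to perform the change of variable $t\mapsto x$ along the characteristics of the PDE \eqref{eq:pdeH}, namely $\tilde{x}(t)=t$ and $\tilde{y}(t)=t+y-x$. Under this substitution, the auxiliary functions $F_{ijk}(u,t)$ and $G_{ijk}(u,t)$ of \eqref{eq:auxFunc} reduce, after the MGF-to-$H$ change of variable \eqref{eq:varChange}, to exponentials that recombine to give precisely $f_{ij}$ of \eqref{eq:fij} and the jump factors $k_i,k_j,l_{ij},l_{ji}$ of \eqref{eq:mykl}. Likewise, the time-domain factor $\lambda_j(t\vert \bm\lambda(0),\bm N)\,e^{-\int_0^t(\lambda_i+\lambda_j)}$ becomes, after the same substitution, the spatial derivative that defines $Q_{ij}$ and $R_{ij}$ in \eqref{eq:myqr}. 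Matching the $r_i$-reset contribution against $h_j$ with the $\mu_{ij}$-jump contribution against $h_i$, and using $h_i(0)=\beta_j$, $h_j(0)=\beta_i$, yields exactly the first equation in \eqref{eq:linFredholm}; the second follows by the symmetric computation. The converse direction is obtained by reversing all steps, which are algebraic identities after the change of variable.

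The main obstacle will be the bookkeeping needed to confirm that the probabilistic kernels built from $F_{ijk}$ and $G_{ijk}$, evaluated along the characteristic $\tilde y(t)=t+y-x$ and integrated against $p_i^0$ and $p_j^0$, coincide with the differentiated kernels $Q_{ij},R_{ij}$ in \eqref{eq:myqr} rather than with $K_{ij},M_{ij}$ in \eqref{eq:defK}. The distinction reflects the fact that in the probabilistic picture one conditions on the next spike occurring at time $t$ (spike density), whereas the Fredholm kernels are naturally expressed in terms of survival factors; the identity $Q_{ij}=\partial_z K_{ij}-f_{ij}(z,0)K_{ij}$ is precisely the bridge between these two points of view, and once this is established the equivalence with \eqref{eq:linFredholm} is immediate. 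Uniqueness of the invariant law of the embedded chain, which follows from the ergodicity of the underlying Harris-recurrent dynamics, then guarantees that $(h_i,h_j)$ is the unique solution satisfying both \eqref{eq:linFredholm} and the normalization \eqref{eq:norm}.
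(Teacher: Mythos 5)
Your proposal follows the same route as the paper: Papangelou's theorem to rewrite the right-hand side of \eqref{eq:consRel} as $h_i/(\beta_i+\beta_j)$, decomposition of $\mathbb{P}^0_{ij}$ via \eqref{eq:palmdepalm} to split the left-hand side into a $p^0_j$-contribution (yielding the $Q_{ij}$ term against $h_i$) and a $p^0_i$-contribution (yielding the $k_i$ and $R_{ij}$ terms against $h_j$), substitution into the conditional MGF \eqref{eq:condMGF}, and then kernel matching after a change of variable and integration by parts. Your observation that the identity $Q_{ij}=\partial_z K_{ij}-f_{ij}(z,0)K_{ij}$ bridges the survival-factor and spike-density viewpoints correctly pinpoints where the ``tedious but straightforward'' bookkeeping lies, and the rest of your outline matches the paper's argument.
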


\begin{proof}
Let us first express the Palm expectations of \eqref{eq:consRel} in terms of $p^0_{ij}$, the stationary measure  of $\bm{\lambda}$ under the Palm distribution $\mathbb{P}_{ij}$.
For instance, we have
\begin{eqnarray}\label{eq:condMGF2}
\Expij{e^{u \lambda_i(T_{j,1}^-)} , \, T_{j,1}<T_{i,1}   } 
&=&
\int \int \Exp{e^{u \lambda_i(T_{j,1}^-)} , \, T_{j,1}<T_{i,1} \, \vert \,\lambda_i, \lambda_j }
p^0_{ij}\big(d\lambda_i,d\lambda_j)\nonumber  \\
&=&
\pi_j \int \Exp{e^{u \lambda_i(T_{j,1}^-)} , \, T_{j,1}<T_{i,1} \, \vert \,\lambda_i+\mu_{ij}, r_j }  p^0_j\big(\lambda_i) \, d\lambda_i\nonumber  \\
& +& \pi_i \int \Exp{e^{u \lambda_i(T_{j,1}^-)} , \, T_{j,1}<T_{i,1} \, \vert \,r_i, \lambda_j+\mu_{ji} }  p^0_i\big(\lambda_j) \,  d\lambda_j\, .
\end{eqnarray}
In the first equality above, we substitute expectation with respect to the stationary distribution $\mathbb P$ for expectation with respect to the Palm distribution $\mathbb{P}_{ij}$ by virtue of the strong Markov property for $\bm{\lambda}$.
In the second equality above, we utilize the definition of the stationary measure for the post-spike embedded chain given in \eqref{eq:consDistr}, which includes reset effects.
Using expression \eqref{eq:condMGF}, we can factorize the state-dependent term of the conditional MGF appearing in \eqref{eq:condMGF2} to write
\begin{eqnarray}
\Exp{e^{u \lambda_i(T_{j,1}^-)} , \, T_{j,1}<T_{i,1} \, \vert \, \lambda_i+\mu_{ij}, r_j  }
&  = & \nonumber\\
&& \hspace{-3cm} \int_0^\infty \exp{\left( \lambda_i \left((u+\tau_i) \, e^{-\frac{t}{\tau_i}} -\tau_i \right)\right)} F_{ij}(u,t)\, dt\\
\Exp{e^{u \lambda_i(T_{j,1}^-)} , \, T_{j,1}<T_{i,1} \, \vert \, r_i, \lambda_j +\mu_{ji} } & = &  
\nonumber\\
& & \hspace{-5cm} \int_0^\infty \left(b_j+(\lambda_j+{\mu_{ji}}-b_j) e^{-\frac{t}{\tau_j}} \right)\exp{\left(  \lambda_j  \tau_j \left( e^{-\frac{t}{\tau_j}}-1\right)\right)} G_{ij}(u,t)\, dt\, ,
\end{eqnarray}
where the factors $F_{ij}(u,t)$ and $G_{ij}(u,t)$ collect the terms that are independent of the initial state $\bm{\lambda}(0)$.
Injecting these factorized representations into  \eqref{eq:condMGF2} yields
\begin{eqnarray}\label{eq:condMGF3}
& &\hspace{-1cm} \Exp{e^{u \lambda_i(T_{j,1}^-)} , \, T_{j,1}<T_{i,1}   }\nonumber \\
&=&
\pi_j  \int_0^\infty  F_{ij}(u,t)\left(\int e^{\lambda_i  \left( (u+\tau_i) \, e^{-\frac{t}{\tau_i}}-\tau_i \right)} p^0_j\big(\lambda_i) \, d\lambda_i \right)\, dt\nonumber \\
& -&  \pi_i \int_0^\infty G_{ij}(u,t) \,  \frac{\partial}{\partial t} \left( \int  e^{  \lambda_j \tau_j \left(e^{-\frac{t}{\tau_j}}-1\right)}   p^0_i\big(\lambda_j) \,  d\lambda_j \right) \, dt\nonumber \\
& +& \pi_i \int_0^\infty G_{ij}(u,t) \left(b_j+({\mu_{ji}}-b_j) e^{-\frac{t}{\tau_j}} \right) \,   \int  e^{  \lambda_j \tau_j \left(e^{-\frac{t}{\tau_j}}-1\right)}   p^0_i\big(\lambda_j) \,  d\lambda_j \, dt\, ,
\end{eqnarray}
where the initial-state dependence only appears in exponents.
We are then in a position to write \eqref{eq:condMGF2} as an equation about the functions $h_i$ and $h_j$, which are rescaled MGFs of $\lambda_i$ and $\lambda_j$ with respect to $\mathbb{P}_j$ and to $\mathbb{P}_i$, respectively.
Specifically, we have 
\begin{eqnarray}
\hspace{20pt} \Expij{e^{u \lambda_i(T_{j,1}^-)} , \, T_{j,1}<T_{i,1} } = \left(\frac{\beta_j}{\beta_i+\beta_j}\right) \Expj{e^{u \lambda_i}} = \frac{h_i\left(\tau_i \ln\left( 1+\frac{u}{\tau_i}\right)\right)}{\beta_i+\beta_j}
\end{eqnarray}
and, upon recognizing MGFs in the right-hand terms of \eqref{eq:condMGF3}, we obtain
\begin{eqnarray}
h_i\left(\tau_i \ln\left( 1+\frac{u}{\tau_i}\right)\right) & = &
\int_0^\infty  F_{ij}(u,t)  \, h_j\left( -t +\tau_i \ln\left( 1+\frac{u}{\tau_i}\right)\right) \, dt \nonumber \\
&+& \int_0^\infty G_{ij}(u,t) \,   \left[ h'_i(-t) + \left( b_j+({\mu_{ji}}-b_j)e^{-\frac{t}{\tau_j}}\right) h_i(-t) \right]
\, dt\, .
\end{eqnarray}
Adopting the short-hand notation
\begin{eqnarray}\label{eq:condMGF4}
\hspace{1.2cm} \tilde{F}_{ij}(x,y) = F_{ij}\left(\tau_i \left( e^{\frac{x}{\tau_i}}\!-\!1\right),-y\right) \; \; \mathrm{and} \; \: \tilde{G}_{ij}(x,y)=G_{ij}\left(\tau_i \left( e^{\frac{x}{\tau_i}}\!-\!1\right),-y\right) \, ,
\end{eqnarray}
we write equation \eqref{eq:condMGF4} under a form similar to that of integral equation \eqref{eq:linFredholm} by integration by parts
\begin{eqnarray}
h_i(x)
&=&
\int_{-\infty}^0  \tilde{F}_{ij}(x,y) \, h_i\left(x+y\right) \, dy \nonumber\\
&& + \int_{-\infty}^0 \tilde{G}_{ij}(x,y) \,   \left[ h'_j(y) + \left( b_j+({\mu_{ji}}-b_j)e^{\frac{y}{\tau_j}}\right) h_i(y)  \right]\, dy \nonumber\\
&=&
\int_{-\infty}^x  \tilde{F}_{ij}(x,y-x)  \, h_i\left(y\right) \, dy + \tilde{G}_{ij}(x,0)  h_j(0)\nonumber\\
&&+ \int_{-\infty}^0  \left[ -\frac{\partial}{\partial y}  \Big[ \tilde{G}_{ij}(x,y) \Big] + \left( b_j+({\mu_{ji}}-b_j)e^{\frac{y}{\tau_j}}\right)\tilde{G}_{ij}(x,y)  \right] \,   h_j(y)  \, dy \, .
\end{eqnarray}
A tedious but straightforward calculation to express the functions intervening in the above equations in terms the auxiliary functions \eqref{eq:auxFunc} yields
\begin{eqnarray}
\begin{array}{ccc}
\displaystyle \tilde{G}_{ij}(x,0) = k_i(x) \, , \quad\tilde{F}_{ij}(x,y-x) = -Q_{ij}(x,y) \, , \vspace{5pt} \\
\displaystyle \frac{\partial}{\partial y}  \Big[ \tilde{G}_{ij}(x,y) \Big]- \left( b_j+({\mu_{ji}}-b_j)e^{\frac{y}{\tau_j}}\right)\tilde{G}_{ij}(x,y)  = R_{ij}(x,y)\, .
\end{array}
\end{eqnarray}
This shows that the conservation equation \eqref{eq:consRel} is equivalent to the integral equation \eqref{eq:linFredholm} obtained via PDE analysis.
\end{proof}


\subsection{Normalization condition}\label{sec:normcond}

The previous section shows that the integral equations \eqref{eq:linFredholm}   can be interpreted as conservation laws about the embedded Markov chains of the dynamics.
There remains to find a probabilistic interpretation for the normalization condition \eqref{eq:norm} required to single out the physical solution to \eqref{eq:linFredholm}  .
It turns out that the probabilistic interpretation of \eqref{eq:norm} follows from the stationarity of the counting process $N_i+N_j$ characterizing the joint spiking activity of the $(i,j)$-pair.
Specifically, we have:

\begin{lemma}\label{lem:normCond}
The normalization condition \eqref{eq:norm} is equivalent to the Slivnyak inverse formula applied to the constant unit function with respect to the counting process $N_i+N_j$, i.e.:
\begin{eqnarray}
\Exp{1} =(\beta_i+\beta_j)\Expij{\int_0^{T_{ij,1}} 1 \, dt} = (\beta_i+\beta_j)\Expij{T_{ij,1}}
\end{eqnarray}
where $T_{ij,1} = \min{ \left( T_{i,1}, T_{j,1} \right)}$ is $(i,j)$-pair interspike. interval.
\end{lemma}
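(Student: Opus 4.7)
The plan is to apply Slivnyak's inverse formula to the stationary point process $N_i+N_j$, which has intensity $\beta_{ij}=\beta_i+\beta_j$, taking as integrand the constant unit function. The rate-conservation form of this formula gives the identity
\begin{eqnarray*}
1 \;=\; \Exp{1} \;=\; (\beta_i+\beta_j)\,\Expij{\int_0^{T_{ij,1}} 1 \, dt} \;=\; (\beta_i+\beta_j)\,\Expij{T_{ij,1}} \, ,
\end{eqnarray*}
which is a standard output of Palm calculus and requires no further proof. The content of the lemma is therefore to show that, once we compute the right-hand side in terms of the densities $p_i^0$ and $p_j^0$ (equivalently in terms of the boundary functions $h_i$ and $h_j$), we exactly recover the normalization condition \eqref{eq:norm}.

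My first step would be to condition on the post-spike state $\bm{\lambda}(0^+)$ and invoke the decomposition \eqref{eq:consDistr} of the Palm distribution $p^0_{ij}$ together with the strong Markov property of $\bm\lambda$:
\begin{eqnarray*}
\Expij{T_{ij,1}}
&=& \pi_i \int \Exp{T_{ij,1}\,\vert\, \bm{\lambda}(0)=(r_i,\lambda_j)}\, p^0_i(\lambda_j-\mu_{ji})\,d\lambda_j \\
&& +\; \pi_j \int \Exp{T_{ij,1}\,\vert\, \bm{\lambda}(0)=(\lambda_i,r_j)}\, p^0_j(\lambda_i-\mu_{ij})\,d\lambda_i \, .
\end{eqnarray*}
The conditional expectation of $T_{ij,1}$ is then computed by writing $\Exp{T_{ij,1}\,\vert\,\bm\lambda(0)} = \int_0^\infty \Prob{T_{ij,1}>t\,\vert\,\bm\lambda(0)}\,dt$ and averaging the pathwise survival function $\exp\bigl(-\int_0^t(\lambda_i+\lambda_j)(s\,\vert\,\bm\lambda(0),\bm N)\,ds\bigr)$ over the independent Poissonian external drives. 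This is exactly the same mechanism as in the derivation of \eqref{eq:condMGF} with $u=0$ and with the output factor $\tilde\lambda_j(t)+\sum_m\mu_{jm}G_{ijm}(u,t)$ suppressed, yielding a clean representation in terms of $\tilde\lambda_i,\tilde\lambda_j$ and the functions $F_{ijk}(0,t)$.

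The second step is to reconcile the resulting expression with the kernels $K_{ij}(0,u)$ and $M_{ij}(0,u)$ in \eqref{eq:defK}. I would introduce the change of variable inherited from \eqref{eq:varChange}, taking $u=\tau_i(e^{x/\tau_i}-1)$ and similarly for $v$, and a time substitution $t\mapsto -u$ (resp.\ $-v$) on the two contributions; a direct calculation starting from the definitions of $f_{ij},k_i,k_j,l_{ij},l_{ji}$ should then show that
\begin{eqnarray*}
\int_0^\infty e^{-\int_0^t(\tilde\lambda_i+\tilde\lambda_j)\,ds + \sum_k \beta_k(F_{ijk}(0,t)-t)}\,dt
\end{eqnarray*}
reproduces, after multiplication by the shifted densities $p^0_i(\cdot-\mu_{ji})$ and $p^0_j(\cdot-\mu_{ij})$, precisely the kernels $M_{ij}(0,u)$ and $K_{ij}(0,u)$ respectively. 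The identifications $h_j(u)=(\text{rescaled MGF of }\lambda_j \text{ under }\mathbb P_i^0)$ and analogously for $h_i$, already used in \cref{lem:intSys2}, then convert the two integrals over $p^0_i$ and $p^0_j$ into the two integrals $\int_{-\infty}^0 M_{ij}(0,u)h_j(u)\,du$ and $\int_{-\infty}^0 K_{ij}(0,u)h_i(u)\,du$ appearing in \eqref{eq:norm}.

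The main obstacle is the bookkeeping in the last step: each contribution from an $i$-spike (resp.\ $j$-spike) produces, after accounting for the reset $r_i$ (resp.\ $r_j$) and the spike-triggered increment $\mu_{ji}$ (resp.\ $\mu_{ij}$), an exponential factor that must reassemble into the compact form $l_{ji}(u)k_i(u) e^{\int_u^0 f_{ij}(v,v)\,dv}$ (resp.\ $l_{ij}(u)k_j(u) e^{\int_u^0 f_{ij}(v,v)\,dv}$). This is essentially the same algebraic identity used implicitly in the proof of \cref{lem:intSys2}, specialized at $z=0$, so once that identification is carried out, the two probabilistic representations of Slivnyak's identity and of \eqref{eq:norm} coincide. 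As a by-product, the computation also confirms the symmetry $K_{ij}(0,u)=M_{ji}(0,u)$ noted after \eqref{eq:norm}.
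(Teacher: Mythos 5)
Your proposal follows essentially the same route as the paper's proof: both treat the Slivnyak/inversion identity $(\beta_i+\beta_j)\Expij{T_{ij,1}}=1$ as the standard Palm-calculus input, decompose $\Expij{T_{ij,1}}$ via the post-spike Palm measure \eqref{eq:consDistr}, compute the conditional survival function by averaging over the independent Poisson drives (i.e., the $u=0$ specialization of the conditional-MGF calculation of \Cref{sec:condmgf}), and then identify the resulting state-independent factors with the kernels $K_{ij}(0,\cdot)$ and $M_{ij}(0,\cdot)$ after the change of variables \eqref{eq:varChange}. The final kernel identification is left as a "tedious but straightforward" computation in both your argument and the paper's, so the two are equivalent in substance and in level of detail.
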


\begin{proof}
The mean time between two consecutive spiking events of the $(i,j)$-pair satisfies
\begin{eqnarray}
\Exp{T_{ij,1}  \, \vert \, \bm{\lambda}(0) } 
&=&
\ExpN{ \int_0^\infty  \Prob{ T_{ij,1}>t \, \vert \, \bm{\lambda}(0) , \bm{N}} \, dt } \, , 
\end{eqnarray}
where  $\bm{N}$ denotes the history of external spike deliveries, with components distributed as independent Poisson processes with mean intensities $\beta_k$, $k \neq i,j$.
Conditioning to a particular realization of $\bm{N}$, the next spiking event of the $(i,j)$-pair is defined as the minimum of two independent rate processes.
Moreover, the stochastic intensities $\lambda_i$ and $\lambda_j$ are the deterministic hazard rate functions associated to these two rate processes.
In particular, we have
\begin{eqnarray}
\lefteqn{ \Prob{ T_{ij,1}>t \, \vert \, \bm{\lambda}(0) , \bm{N}}
= } \nonumber\\
&& \hspace{60pt} \exp{\left(-\int_0^t \Big( \lambda_i(s\, \vert \, \bm{\lambda}(0) , \, \bm{N}) +\lambda_j(s \, \vert  \, \bm{\lambda}(0) , \, \bm{N}) \Big) \,ds \right)} \, ,
\end{eqnarray}
where we use that the overall hazard rate of the pair is the sum of individual hazard rates by conditional independence.
Exploiting the independent Poissonian nature of the external spike deliveries, a similar calculation as for the conditional MGF in Section \Cref{sec:condmgf} yields
\begin{eqnarray}\label{eq:probT}
\lefteqn{\ExpN{  \Prob{ T_{ij,1}>t \, \vert \, \bm{\lambda}(0) , \bm{N}} }
=} \nonumber\\
&& \hspace{60pt}
\int_0^\infty e^{-\int_0^t \left(\tilde{\lambda}_i(s)+\tilde{\lambda}_j(s)\right) \, ds} e^{ \sum_{k \neq i,j} \left( F_{ijk}(0,t) -t\right)\beta_k}  \, dt \, .
\end{eqnarray}
Utilizing expression \eqref{eq:consDistr} for the stationary distribution $p_{ij}^0$ of the post-spiking embedded chain, we can evaluate the expectation of the mean interspike time with respect to the Palm distribution $\mathbb{P}_{ij}$:
\begin{eqnarray}
\lefteqn{ \Expij{T_{ij,1} } 
=
\pi_j \int \Exp{T_{ij,1}  \, \vert \,\lambda_i+\mu_{ij}, r_j }  p^0_j\big(\lambda_i) \, d\lambda_i  }
\nonumber\\
&& \hspace{80pt}
+ \: \pi_i \int \Exp{T_{ij,1} \, \vert \,r_i, \lambda_j+\mu_{ji} }  p^0_i\big(\lambda_j) \,  d\lambda_j \, .
\end{eqnarray}
where the conditional expectations can be specified via formula \eqref{eq:probT}.
In fact, factorizing the initial-state dependence as in Section \Cref{sec:intec}, we have
\begin{eqnarray}
\Exp{T_{ij,1}  \, \vert \,\lambda_i+\mu_{ij}, r_j }  &=&  \int_0^\infty \exp{\left(\tau_i \lambda_i \left(e^{-\frac{t}{\tau_i}} -1 \right)\right)} H_{ij}(t) \, dt \, ,\\
\Exp{T_{ij,1} \, \vert \,r_i, \lambda_j+\mu_{ji} } &=& \int_0^\infty \exp{\left( \tau_j \lambda_j \left(e^{-\frac{t}{\tau_j}} -1 \right)\right)} H_{ji}(t) \, dt \, .
\end{eqnarray}
where the auxiliary functions $H_{ij}$ and $H_{ji}$ collect the terms that are independent of the initial state $\bm{\lambda}(0)$.
Applying the inversion formula of Palm calculus to the constant unit functions with respect to the counting process $N_i+N_j$, 
we get $(\beta_i+\beta_j)\Expij{T_{ij,1}}=1$.  
Utilizing the definitions of $\pi_i$ and $\pi_j$ in terms of the rates $\beta_i$ and $\beta_j$ given in \eqref{eq:defPi},
the inversion formula reads
\begin{eqnarray}
1
&=&
 \int_0^\infty H_{ij}(t)  \int   \beta_i \exp{\left(\tau_i \lambda_i \left(e^{-\frac{t}{\tau_i}} -1 \right)\right)} p^0_j\big(\lambda_i) \, d\lambda_i \,dt \\
&& \hspace{20pt} + \int_0^\infty H_{ji}(t)  \int  \beta_j  \exp{\left(\tau_j \lambda_j \left(e^{-\frac{t}{\tau_j}} -1 \right)\right)}  p^0_i\big(\lambda_j) \, dt \,  d\lambda_j \, , \\
&=&  \int_{-\infty}^0 H_{ij}(-x) h_i(x) \, dx +   \int_{-\infty}^0 H_{ji}(-y) h_j(y) \, dy \, ,
\end{eqnarray}
where the second equality follows from recognizing MGF functions and performing a change of variable.
Finally a tedious but straightforward calculation shows that
$H_{ij}(-x) = K_{ij}(x)$ and $H_{ji}(-x) = M_{ij}(x)$, proving the equivalence of the normalization condition \eqref{eq:norm}  to
the inversion formula applied to the constant unit function with respect to the counting process $N_i+N_j$.
\end{proof}


\section*{Acknowledgments} T.T. was supported by the Alfred P. Sloan Research Fellowship FG-2017-9554.
 F.B. was supported by an award from the Simons Foundation (\#197982). Both awards are
to the University of Texas at Austin.

\bibliographystyle{siamplain}
\bibliography{siam_bib}

\end{document}